\documentclass[a4paper, 12pt]{article}
\usepackage[top=30truemm, bottom=30truemm, left=20truemm, right=20truemm]{geometry}
\usepackage{amsthm}
\usepackage{amsmath}
\usepackage{amssymb}
\usepackage{amsfonts}
\usepackage{mathtools}
\usepackage{mathrsfs}
\usepackage{bm}
\usepackage[hidelinks]{hyperref}
\newtheoremstyle{mystyle}
	{\baselineskip}
	{\baselineskip}
	{\itshape}
	{}
	{\bfseries}
	{.}
	{1em}
	{}

\theoremstyle{mystyle}
	\newtheorem{lem}{Lemma}[section]
	
	\newtheorem{prop}[lem]{Proposition}
	\newtheorem{thm}[lem]{Theorem}

\newtheoremstyle{mystyle2}
	{\baselineskip}
	{\baselineskip}
	{\normalfont}
	{}
	{\bfseries}
	{.}
	{1em}
	{}

\theoremstyle{mystyle2}
	\newtheorem{rem}[lem]{Remark}
	
\newcommand{\R}{\mathbb{R}}
\newcommand{\Z}{\mathbb{Z}}
\renewcommand{\cal}[1]{\mathcal{#1}}
\newcommand{\scr}[1]{\mathscr{#1}}
\newcommand{\abs}[1]{\left\lvert #1 \right\rvert}
\newcommand{\norm}[1]{\left\lVert #1 \right\rVert}
\numberwithin{equation}{section}

\allowdisplaybreaks[4]
\begin{document}
\title{On the optimal decay rate of global solutions to the convection-diffusion equation with zero-mass initial data}

\author{Yi C. Huang and Ryunosuke Kusaba\footnote{Corresponding author}}

\date{}

\maketitle

\footnotetext{\textbf{2020 Mathematics Subject Classification}: Primary: 35K58; Secondary: 35B40, 35C20.}

\footnotetext{\textbf{Keywords}: Convection-diffusion equation; Optimal decay rate; Asymptotic profile; Self-similarity.}

\begin{abstract}
	We consider the optimal decay rate of global solutions to the convection-diffusion equation with zero-mass initial data.
	We establish an improved decay estimate of the global solutions under the zero-mass condition.
	Moreover, we derive a self-similar asymptotic profile of the global solutions.
	This result provides necessary and sufficient conditions for attaining the improved decay rate and shows that the decay rate is optimal.
\end{abstract}
\section{Introduction}

In this paper, we consider the optimal decay rate of global solutions to the Cauchy problem for the convection-diffusion equation:
\begin{align*}
	\tag{P} \label{P}
	\begin{cases}
		\partial_{t} u- \Delta u=a \cdot \nabla f \left( u \right), &\qquad \left( t, x \right) \in \left( 0, \infty \right) \times \R^{n}, \\
		u \left( 0 \right) =u_{0}, &\qquad x \in \R^{n},
	\end{cases}
\end{align*}
where $u \colon \left[ 0, \infty \right) \times \R^{n} \to \R$ is an unknown scalar function, $u_{0} \colon \R^{n} \to \R$ is a given datum at $t=0$, $a= \left( a_{1}, \ldots, a_{n} \right) \in \R^{n} \setminus \left\{ 0 \right\}$ is a given constant vector, and $f \colon \R \to \R$ is a (positively) homogeneous function of degree $p>1$ given by one of the following forms:
\begin{align*}
	f \left( \xi \right) = \xi^{p}, {\ } \abs{\xi}^{p}, {\ } \abs{\xi}^{p-1} \xi, \qquad \xi \in \R.
\end{align*}
In the case where $f \left( \xi \right) = \xi^{p}$, we may assume that $p$ is an integer.
The convection-diffusion equation is known as a model of fluid dynamics with the mass conservation law:
\begin{align*}
	\cal{M}_{0} \left( u \left( t \right) \right) = \cal{M}_{0} \left( u_{0} \right), \qquad \forall t>0,
\end{align*}
where
\begin{align*}
	\cal{M}_{0} \left( \varphi \right) \coloneqq \int_{\R^{n}} \varphi \left( y \right) dy.
\end{align*}
In this equation, the terms $\Delta u$ and $a \cdot \nabla f \left( u \right)$ describe linear diffusion and nonlinear convection along the direction of the constant vector $a$, respectively.

Before reviewing previous works on \eqref{P}, we introduce some notation.
For $q \in \left[ 1, \infty \right]$, let $L^{q} \left( \R^{n} \right)$ denote the standard Lebesgue space equipped with the norm $\norm{\,\cdot\,}_{L^{q}}$.
For $m>0$, the weighted $L^{1}$-space of order $m$ is defined by
\begin{align*}
	L^{1}_{m} \left( \R^{n} \right) \coloneqq \left\{ \varphi \in L^{1} \left( \R^{n} \right); {\,} \norm{\varphi}_{L^{1}_{m}} < \infty \right\},
\end{align*}
where
\begin{align*}
	\norm{\varphi}_{L^{1}_{m}} \coloneqq \int_{\R^{n}} \left( 1+ \abs{x}^{m} \right) \abs{\varphi \left( x \right)} dx.
\end{align*}
For a Banach space $X$ and an interval $I \subset \R$, let $BC \left( I; X \right)$ denote the set of all bounded continuous $X$-valued functions on $I$.
For Banach spaces $X$ and $Y$ equipped with norms $\norm{\,\cdot\,}_{X}$ and $\norm{\,\cdot\,}_{Y}$, respectively, we set
\begin{align*}
	\norm{\varphi}_{X \cap Y} \coloneqq \norm{\varphi}_{X} + \norm{\varphi}_{Y}, \qquad \varphi \in X \cap Y.
\end{align*}

By virtue of the Duhamel principle, \eqref{P} can be written in the following integral form:
\begin{align}
	\label{eq:P_integral}
	u \left( t \right) =e^{t \Delta} u_{0} + \int_{0}^{t} a \cdot \nabla e^{\left( t- \tau \right) \Delta} f \left( u \left( \tau \right) \right) d \tau, \qquad t>0,
\end{align}
where $\left( e^{t \Delta}; t \geq 0 \right)$ is the heat semigroup defined by
\begin{align*}
	e^{t \Delta} \varphi \coloneqq \begin{cases}
		G_{t} \ast \varphi, &\qquad t>0, \\
		\varphi, &\qquad t=0,
	\end{cases}
\end{align*}
$G_{t} \colon \R^{n} \to \R$ is the heat kernel given by
\begin{align*}
	G_{t} \left( x \right) \coloneqq \left( 4 \pi t \right)^{- \frac{n}{2}} \exp \left( - \frac{\abs{x}^{2}}{4t} \right), \qquad x \in \R^{n},
\end{align*}
and $\ast$ is the convolution in $\R^{n}$.

The global well-posedness of \eqref{P} was established in \cite{Escobedo-Zuazua}, where a decay estimate of global solutions to \eqref{P} was also derived.
We refer the reader to \cite{Duro-Zuazua, Zuazua_2020} for improvements of these results.

\begin{prop}[{\cite{Escobedo-Zuazua, Duro-Zuazua, Zuazua_2020}}] \label{pro:P_global}
	Let $p>1$ and let $u_{0} \in ( L^{1} \cap L^{\infty} ) \left( \R^{n} \right)$.
	Then \eqref{P} has a unique global solution
	\begin{align*}
		u \in X \coloneqq BC \left( \left[ 0, \infty \right); L^{1} \left( \R^{n} \right) \right) \cap BC \left( \left( 0, \infty \right); L^{\infty} \left( \R^{n} \right) \right).
	\end{align*}
	Moreover, for any $q \in \left[ 1, \infty \right]$, there exists $C>0$ such that the estimate
	\begin{align}
		\label{eq:P_decay_gene}
		\norm{u \left( t \right)}_{L^{q}} \leq C \left( 1+t \right)^{- \frac{n}{2} \left( 1- \frac{1}{q} \right)}
	\end{align}
	holds for all $t>0$.
\end{prop}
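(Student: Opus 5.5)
We will prove the proposition in three stages: local existence, global a priori bounds, and decay.

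\textbf{Local existence.} We apply the contraction mapping principle to the integral form \eqref{eq:P_integral}. We work in the ball of $BC([0,T];L^{1}\cap L^{\infty})$ of radius $2\norm{u_0}_{L^1\cap L^\infty}$. The linear heat estimates give
\begin{align*}
	\norm{e^{t\Delta}\varphi}_{L^q} \le \norm{\varphi}_{L^q}, \qquad \norm{\nabla e^{t\Delta}\varphi}_{L^q}\le Ct^{-1/2}\norm{\varphi}_{L^q}.
\end{align*}
Since $f$ is locally Lipschitz, $\abs{f(\xi)-f(\eta)}\le C(\abs{\xi}+\abs{\eta})^{p-1}\abs{\xi-\eta}$. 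Hence the Duhamel term is a contraction for small $T$, with $T$ depending only on $\norm{u_0}_{L^1\cap L^\infty}$; the kernel $t^{-1/2}$ is integrable. This yields a unique mild solution, continuous into $L^1$ and $L^\infty$, which can be continued as long as $\norm{u(t)}_{L^1\cap L^\infty}$ stays bounded.

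\textbf{Global a priori bounds.} We first show that the solution is smooth for $t>0$ by parabolic regularity. Then we derive the two bounds needed for continuation. For $L^1$, we use the $L^1$-contraction: multiply the equation by a smooth approximation of $\operatorname{sgn}(u)$ and integrate. The convection term integrates to zero because it is a divergence of a function of $u$; more precisely, $\int a\cdot\nabla f(u)\,\eta'(u)\,dx=\int a\cdot\nabla F(u)\,dx=0$ with $F'=f'\eta'$. This gives $\norm{u(t)}_{L^1}\le\norm{u_0}_{L^1}$. For $L^\infty$, the maximum principle gives $\norm{u(t)}_{L^\infty}\le\norm{u_0}_{L^\infty}$, since convection along characteristics does not create new extrema. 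The same conclusion follows from $L^{q}$ energy estimates, $\frac{d}{dt}\norm{u}_{L^q}^q\le0$, letting $q\to\infty$. These bounds rule out blow-up, so the solution is global and belongs to $X$.

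\textbf{Decay.} We obtain the decay rate from the energy method of Escobedo--Zuazua together with a Nash-type inequality. For $q=2^k$ we have
\begin{align*}
	\frac{d}{dt}\norm{u}_{L^{q}}^{q}=-\frac{4(q-1)}{q}\norm{\nabla \abs{u}^{q/2}}_{L^2}^{2},
\end{align*}
because the convection contribution again vanishes. By the Nash inequality,
\begin{align*}
	\norm{w}_{L^2}^{1+2/n}\le C\norm{w}_{L^1}^{2/n}\norm{\nabla w}_{L^2}, \qquad w=\abs{u}^{q/2},
\end{align*}
where $\norm{w}_{L^1}=\norm{u}_{L^{q/2}}^{q/2}$ is controlled by the previous step of the induction. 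The resulting ODE inequality yields
\begin{align*}
	\norm{u(t)}_{L^q}\le C t^{-\frac n2(1-\frac1q)}
\end{align*}
inductively in $k$, with constants that must remain controlled as $q\to\infty$. This is a Moser iteration in the style of Alikakos. Combined with the uniform bounds $\norm{u}_{L^1}\le\norm{u_0}_{L^1}$ and $\norm{u}_{L^\infty}\le\norm{u_0}_{L^\infty}$ for small $t$, the factor becomes $(1+t)$. Intermediate values of $q$ follow by interpolation between $L^1$ and $L^\infty$.

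The main obstacle is the $L^\infty$ decay: we must keep the iteration constants summable (a geometric product converging) as $q=2^k\to\infty$. An alternative is the time-weighted Duhamel argument. This would first give decay for small data, but it fails for large $p$ near $1$ because the nonlinear term is not perturbative when $p<1+1/n$. That is why we use the energy/Nash route, which is insensitive to $p$.
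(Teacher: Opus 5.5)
The paper does not prove this proposition; it quotes it from Escobedo--Zuazua and later works, so there is no internal argument to compare against. Your outline is a correct, self-contained proof along the classical energy-method lines:
\begin{itemize}
\item a local fixed point with the integrable kernel $t^{-1/2}$;
\item the $L^{1}$-contraction and the monotonicity of $\norm{u(t)}_{L^{q}}$, where in both computations the convection term is a divergence $\nabla\cdot(aF(u))$ with $F(0)=0$ and drops out;
\item a Nash--Moser iteration, which does close as you expect.
\end{itemize}
For the last step: if $\norm{u(s)}_{L^{q/2}} \leq M s^{-\frac{n}{2}(1-\frac{2}{q})}$ for all $s>0$, then integrating the differential inequality for $y=\norm{u}_{L^{q}}^{q}$ gives $\norm{u(t)}_{L^{q}} \leq (c_{n} q)^{\frac{n}{2q}} M t^{-\frac{n}{2}(1-\frac{1}{q})}$. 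Since $\prod_{k \geq 1} (c_{n} 2^{k})^{n/2^{k+1}} < \infty$, the constants stay bounded as $q=2^{k}\to\infty$.

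It is worth seeing what this route buys compared with the time-weighted Duhamel bootstrap the paper uses for its own results, namely the function $Q(t)$ in Lemma \ref{lem:P_Duhamel_decay} and Theorem \ref{thm:P_decay_above} (2). Your route gives $\norm{u(t)}_{L^{q}} \leq C_{n} \norm{u_{0}}_{L^{1}} t^{-\frac{n}{2}(1-\frac{1}{q})}$ with a constant independent of $p$, $a$ and the size of $u_{0}$. That is exactly why the proposition holds for every $p>1$ and arbitrary data. The Duhamel bootstrap, by contrast, needs smallness and a lower bound on $p$, as you correctly observed.

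Two technical points should be tidied.
\begin{itemize}
\item The fixed point cannot be run in $BC([0,T]; L^{\infty})$, because $e^{t\Delta}u_{0} \not\to u_{0}$ in $L^{\infty}$ for general $u_{0} \in L^{\infty}$. Use $C([0,T]; L^{1}) \cap L^{\infty}(0,T; L^{\infty})$, with $L^{\infty}$-continuity only on $(0,T]$; this is all that membership in $X$ requires. Uniqueness in $X$ then follows from the same Lipschitz estimate, since elements of $X$ are bounded in $L^{1}\cap L^{\infty}$ near $t=0$.
\item For non-integer $p$ the nonlinearity is only finitely differentiable at $0$ (for instance only $C^{1,p-1}$ when $1<p<2$), so $u$ is classical rather than smooth for $t>0$. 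This still suffices for the energy identities, provided you note that $u(t)\in W^{1,1}\cap W^{1,\infty}$ for $t>0$, so the boundary terms at infinity vanish.
\end{itemize}
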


We remark that the decay rate in \eqref{eq:P_decay_gene} coincides with that of the free solution $e^{t \Delta} u_{0}$.
For the decay estimate of the free solution, see Lemma \ref{lem:heat_decay} in Section \ref{sec:Heat}.
The above proposition implies that the effect of the linear diffusion is stronger than that of the nonlinear convection in the context of the global well-posedness.
In other words, this result does not reveal the effect of the nonlinear convection since no restrictions are imposed on the range of the exponent $p$ or on the size of the initial datum $u_{0}$.
This fact leads us to investigate the asymptotic behavior of the global solution to \eqref{P} given in Proposition \ref{pro:P_global}.

In \cite{Escobedo-Zuazua, Escobedo-Vazquez-Zuazua_1, Escobedo-Vazquez-Zuazua_2, Zuazua_1994, Carpio_1996_CD}, the authors showed that the asymptotic behavior of the global solution $u$ changes dramatically depending on the exponent $p$ relative to the exponent $1+1/n$.
Under the same assumption as in Proposition \ref{pro:P_global}, the classification of the asymptotic behavior is described as follows:
\begin{itemize}
	\item[(A)]
		If $p<1+1/n$, then for any $q \in \left[ 1, \infty \right)$, it holds that
		\begin{align*}
			\lim_{t \to \infty} t^{\frac{n+1}{2p} \left( 1- \frac{1}{q} \right)} \norm{u \left( t \right) - \cal{A}_{1, \mathrm{sub}} \left( t \right)}_{L^{q}} =0,
		\end{align*}
		where $\cal{A}_{1, \mathrm{sub}}$ is the self-similar solution to the following reduced equation:
		\begin{align*}
			\partial_{t} \cal{A}_{1, \mathrm{sub}} - \Delta_{\perp a} \cal{A}_{1, \mathrm{sub}} =a \cdot \nabla f \left( \cal{A}_{1, \mathrm{sub}} \right), \qquad \left( t, x \right) \in \left( 0, \infty \right) \times \R^{n}
		\end{align*}
		with $\cal{M}_{0} \left( \cal{A}_{1, \mathrm{sub}} \left( t \right) \right) = \cal{M}_{0} \left( u_{0} \right)$ for all $t>0$ (see \cite{Escobedo-Vazquez-Zuazua_1, Escobedo-Vazquez-Zuazua_2, Carpio_1996_CD}).
		Here, $\Delta_{\perp a}$ denotes the Laplacian on the hyperplane orthogonal to the constant vector $a$.
	\item[(B)]
		If $p=1+1/n$, then for any $q \in \left[ 1, \infty \right]$, it holds that
		\begin{align*}
			\lim_{t \to \infty} t^{\frac{n}{2} \left( 1- \frac{1}{q} \right)} \norm{u \left( t \right) - \cal{A}_{1, \mathrm{cri}} \left( t \right)}_{L^{q}} =0,
		\end{align*}
		where $\cal{A}_{1, \mathrm{cri}}$ is the self-similar solution to the convection-diffusion equation
		\begin{align*}
			\partial_{t} \cal{A}_{1, \mathrm{cri}} - \Delta \cal{A}_{1, \mathrm{cri}} =a \cdot \nabla f \left( \cal{A}_{1, \mathrm{cri}} \right), \qquad \left( t, x \right) \in \left( 0, \infty \right) \times \R^{n}
		\end{align*}
		with $\cal{M}_{0} \left( \cal{A}_{1, \mathrm{cri}} \left( t \right) \right) = \cal{M}_{0} \left( u_{0} \right)$ for all $t>0$ (see \cite{Escobedo-Zuazua, Zuazua_1994}).
	\item[(C)]
		If $p>1+1/n$, then for any $q \in \left[ 1, \infty \right]$, it holds that
		\begin{align*}
			\lim_{t \to \infty} t^{\frac{n}{2} \left( 1- \frac{1}{q} \right)} \norm{u \left( t \right) - \cal{A}_{1, \mathrm{sup}} \left( t \right)}_{L^{q}} =0,
		\end{align*}
		where $\cal{A}_{1, \mathrm{sup}}$ is the self-similar solution to the linear heat equation
		\begin{align*}
			\partial_{t} \cal{A}_{1, \mathrm{sup}} - \Delta \cal{A}_{1, \mathrm{sup}} =0, \qquad \left( t, x \right) \in \left( 0, \infty \right) \times \R^{n}
		\end{align*}
		with $\cal{M}_{0} \left( \cal{A}_{1, \mathrm{sup}} \left( t \right) \right) = \cal{M}_{0} \left( u_{0} \right)$ for all $t>0$, namely, $\cal{A}_{1, \mathrm{sup}} \left( t \right) \coloneqq \cal{M}_{0} \left( u_{0} \right) G_{t}$ (see \cite{Escobedo-Zuazua}).
\end{itemize}

From the above results together with the self-similarity of the asymptotic profiles, we can determine the optimality of the decay rate in \eqref{eq:P_decay_gene}.
In fact, we see that the decay rate in \eqref{eq:P_decay_gene} is optimal when $p \geq 1+1/n$, whereas it is not optimal when $p<1+1/n$.
Here, a decay rate is said to be optimal if a lower bound with the same decay rate can be derived for some initial data.
As for the case where $p<1+1/n$, it was shown in \cite{Escobedo-Vazquez-Zuazua_1, Escobedo-Vazquez-Zuazua_2, Carpio_1996_CD} that the global solution $u$ decays faster than the free solution $e^{t \Delta} u_{0}$.
More precisely, for any $q \in \left[ 1, \infty \right]$, there exists $C>0$ such that the estimate
\begin{align}
	\label{eq:P_decay_sub}
	\norm{u \left( t \right)}_{L^{q}} \leq Ct^{- \frac{n+1}{2p} \left( 1- \frac{1}{q} \right)}
\end{align}
holds for all $t>0$.
We note that
\begin{align*}
	\frac{n+1}{2p} > \frac{n}{2} \quad \Longleftrightarrow \quad p<1+ \frac{1}{n}.
\end{align*}
The optimality of the decay rate in \eqref{eq:P_decay_sub} can be proved for $q \in \left[ 1, \infty \right)$, while it remains an open problem for $q= \infty$ since the asymptotic behavior of the global solution $u$ has not yet been established in $L^{\infty} \left( \R^{n} \right)$.
We emphasize that the optimal decay rates in \eqref{eq:P_decay_gene} and \eqref{eq:P_decay_sub} are attained if and only if $\cal{M}_{0} \left( u_{0} \right) \neq 0$.

In the case where $p>1+1/n$, the asymptotic behavior of the global solution has been further investigated in \cite{Zuazua_1993, Fukuda-Sato, Kusaba, Yamamoto_arXiv} from the viewpoint of higher-order asymptotic expansions with optimal convergence rates.
In these papers, the initial datum is assumed to belong to the weighted $L^{1}$-spaces.
To clarify the role of the weight assumption on the initial datum, we review the result of \cite{Escobedo-Zuazua} concerning the first-order asymptotic expansion.
See also \cite{Zuazua_2020}.

\begin{prop}[{\cite{Escobedo-Zuazua, Zuazua_2020}}] \label{pro:P_1st-asymp_super}
	Let $p>1+1/n$ and let $u_{0} \in ( L^{1}_{1} \cap L^{\infty} ) \left( \R^{n} \right)$.
	Let $u \in X$ be the global solution to \eqref{P} given in Proposition \ref{pro:P_global}.
	Then for any $q \in \left[ 1, \infty \right]$, there exists $C>0$ such that the estimates
	\begin{align}
		\label{eq:P_1st-asymp_super}
		\norm{u \left( t \right) - \cal{A}_{1, \mathrm{sup}} \left( t \right)}_{L^{q}}
		\leq Ct^{- \frac{n}{2} \left( 1- \frac{1}{q} \right)} \times \begin{dcases}
			t^{- \sigma}, &\qquad p<1+ \frac{2}{n}, \\
			t^{- \frac{1}{2}} \log t, &\qquad p=1+ \frac{2}{n}, \\
			t^{- \frac{1}{2}}, &\qquad p>1+ \frac{2}{n}
		\end{dcases}
	\end{align}
	hold for all $t>2$, where
	\begin{align*}
		\cal{A}_{1, \mathrm{sup}} \left( t \right) &\coloneqq \cal{M}_{0} \left( u_{0} \right) G_{t}, \\
		\sigma &\coloneqq \frac{n}{2} \left( p-1 \right) - \frac{1}{2}.
	\end{align*}
\end{prop}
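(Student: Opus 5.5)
We split the error via the integral form \eqref{eq:P_integral}:
\begin{align*}
	u \left( t \right) - \cal{A}_{1, \mathrm{sup}} \left( t \right) = \left( e^{t \Delta} u_{0} - \cal{M}_{0} \left( u_{0} \right) G_{t} \right) + \int_{0}^{t} a \cdot \nabla e^{\left( t- \tau \right) \Delta} f \left( u \left( \tau \right) \right) d \tau ,
\end{align*}
and estimate each piece in $L^{q}$.

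\textbf{Linear term.} We use the weight $u_{0} \in L^{1}_{1}$. Writing $G_{t} \left( x-y \right) - G_{t} \left( x \right) = - \int_{0}^{1} y \cdot \nabla G_{t} \left( x- \theta y \right) d \theta$ and applying Minkowski's inequality gives
\begin{align*}
	\norm{e^{t \Delta} u_{0} - \cal{M}_{0} \left( u_{0} \right) G_{t}}_{L^{q}} \leq \norm{\nabla G_{t}}_{L^{q}} \norm{u_{0}}_{L^{1}_{1}} \leq Ct^{- \frac{n}{2} \left( 1- \frac{1}{q} \right) - \frac{1}{2}} .
\end{align*}
This is at least as good as each of the three rates in \eqref{eq:P_1st-asymp_super}, because $\sigma < 1/2$ exactly when $p<1+2/n$.

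\textbf{Nonlinear Duhamel term.} Since $f$ is homogeneous of degree $p$, \eqref{eq:P_decay_gene} gives
\begin{align*}
	\norm{f \left( u \left( \tau \right) \right)}_{L^{r}} = \norm{u \left( \tau \right)}_{L^{pr}}^{p} \leq C \left( 1+ \tau \right)^{- \frac{n}{2} \left( p- \frac{1}{r} \right)} .
\end{align*}
Split the time integral at $t/2$.
\begin{itemize}
	\item For $\tau \in \left( 0, t/2 \right)$, take the gradient heat estimate $L^{1} \to L^{q}$, namely $\norm{\nabla e^{\left( t- \tau \right) \Delta} g}_{L^{q}} \leq C \left( t- \tau \right)^{- \frac{n}{2} \left( 1- \frac{1}{q} \right) - \frac{1}{2}} \norm{g}_{L^{1}}$ (Lemma \ref{lem:heat_decay} type). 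Combined with $\norm{f \left( u \right)}_{L^{1}} \leq C \left( 1+ \tau \right)^{- \frac{n}{2} \left( p-1 \right)}$, this piece is bounded by
	\begin{align*}
		Ct^{- \frac{n}{2} \left( 1- \frac{1}{q} \right) - \frac{1}{2}} \int_{0}^{t/2} \left( 1+ \tau \right)^{- \frac{n}{2} \left( p-1 \right)} d \tau .
	\end{align*}
	The integral is $O \left( t^{1- \frac{n}{2} \left( p-1 \right)} \right) = O \left( t^{\frac{1}{2} - \sigma} \right)$ if $p<1+2/n$, $O \left( \log t \right)$ if $p=1+2/n$, and $O \left( 1 \right)$ if $p>1+2/n$. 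This produces exactly the three rates in \eqref{eq:P_1st-asymp_super}.
	\item For $\tau \in \left( t/2, t \right)$, put the derivative on the kernel with an $L^{q} \to L^{q}$ bound, $\norm{\nabla e^{s \Delta} g}_{L^{q}} \leq Cs^{- 1/2} \norm{g}_{L^{q}}$. Use $\norm{f \left( u \right)}_{L^{q}} \leq \norm{u}_{L^{\infty}}^{p-1} \norm{u}_{L^{q}} \leq C \tau^{- \frac{n}{2} \left( p-1 \right) - \frac{n}{2} \left( 1- \frac{1}{q} \right)}$. Integrating $\left( t- \tau \right)^{-1/2}$ over $\left( t/2, t \right)$ gives $t^{1/2}$, so this piece is bounded by $Ct^{- \frac{n}{2} \left( 1- \frac{1}{q} \right) - \sigma}$.
\end{itemize}
This is sufficient when $p<1+2/n$, since the target rate there is exactly $t^{- \sigma}$.

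\textbf{Main obstacle: the $t/2 < \tau < t$ piece when $p \geq 1+2/n$.} The crude bound above only gives $t^{- \sigma}$, which is not enough. However, for $p \geq 1+2/n$ we have $\sigma \geq 1/2$, so $t^{- \sigma} \leq t^{-1/2}$ for $t>2$. Hence this contribution is already bounded by $Ct^{- \frac{n}{2} \left( 1- \frac{1}{q} \right) - \frac{1}{2}}$, which is dominated by the claimed rate in both cases $p=1+2/n$ and $p>1+2/n$. So the piece is harmless after all.

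The only remaining care is the endpoint cases $q=1$ and $q= \infty$ in the heat estimates. These are standard from Lemma \ref{lem:heat_decay}, and $L^{1}$–$L^{\infty}$ interpolation is available if needed.

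Summing the linear term and the two Duhamel pieces, and taking the worst rate in each regime of $p$, yields \eqref{eq:P_1st-asymp_super} for $t>2$.
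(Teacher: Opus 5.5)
Your proof is correct. The linear part is the estimate of Lemma~\ref{lem:heat_asymp}~(2), which you rederive via Taylor's formula and Minkowski's inequality. Splitting the Duhamel term at $t/2$, with an $L^{1}\to L^{q}$ gradient bound on $(0,t/2)$ and an $L^{q}\to L^{q}$ bound on $(t/2,t)$, is the same device the paper uses in Lemma~\ref{lem:P_Duhamel_decay}. The three regimes come from $\int_{0}^{t/2}(1+\tau)^{-\frac{n}{2}(p-1)}\,d\tau$, and the late-time piece $O(t^{-\frac{n}{2}(1-\frac{1}{q})-\sigma})$ is harmless once $\sigma\geq 1/2$, as you note. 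The paper gives no proof of this proposition (it is quoted from the literature), and your argument is the standard one.
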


Comparing Proposition \ref{pro:P_1st-asymp_super} with the statement (C) above, we observe that the weight assumption on the initial datum improves the convergence rate in the first-order asymptotic expansion, namely, the decay rate of the remainder $u \left( t \right) - \cal{A}_{1, \mathrm{sup}} \left( t \right)$.
This situation is analogous to that for the free solution (see Lemma \ref{lem:heat_asymp} in Section \ref{sec:Heat}).
Furthermore, Proposition \ref{pro:P_1st-asymp_super} suggests that the exponent $1+2/n$ may distinguish the convergence rates in the first-order asymptotic expansion.

In order to reveal the role of the new exponent $1+2/n$ in the first-order asymptotic expansion, the optimality of the decay rates in \eqref{eq:P_1st-asymp_super} should be determined.
For this purpose, the author in \cite{Zuazua_1993} established the second-order asymptotic expansion in each case of $p<1+2/n$, $p=1+2/n$, and $p>1+2/n$.
This result was improved in the case where $p<1+2/n$ by \cite{Kusaba}.
We also refer the reader to \cite{Yamamoto_arXiv} for the case of $p=1+2/n$ in two dimensions.

\begin{prop}[{\cite{Kusaba}}] \label{pro:P_2nd-asymp_sub}
	Let $1+1/n<p<1+2/n$ and let $u_{0} \in ( L^{1}_{1} \cap L^{\infty} ) \left( \R^{n} \right)$.
	Let $u \in X$ be the global solution to \eqref{P} given in Proposition \ref{pro:P_global}.
	Then for any $q \in \left[ 1, \infty \right]$, there exists $C>0$ such that the estimates
	\begin{align}
		\label{eq:P_2nd-asymp_sub}
		\norm{u \left( t \right) - \cal{A}_{2, \mathrm{sub}} \left( t \right)}_{L^{q}}
		\leq Ct^{- \frac{n}{2} \left( 1- \frac{1}{q} \right)} \times \begin{dcases}
			t^{-2 \sigma}, &\qquad p<1+ \frac{3}{2n}, \\
			t^{- \frac{1}{2}} \log t, &\qquad p=1+ \frac{3}{2n}, \\
			t^{- \frac{1}{2}}, &\qquad p>1+ \frac{3}{2n}
		\end{dcases}
	\end{align}
	hold for all $t>2$, where
	\begin{align*}
		\cal{A}_{2, \mathrm{sub}} \left( t \right) \coloneqq \cal{A}_{1, \mathrm{sup}} \left( t \right) + \int_{0}^{t} a \cdot \nabla e^{\left( t- \tau \right) \Delta} f \left( \cal{A}_{1, \mathrm{sup}} \left( \tau \right) \right) d \tau.
	\end{align*}
\end{prop}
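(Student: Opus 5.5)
Subtracting the Duhamel formula \eqref{eq:P_integral} from the definition of $\cal{A}_{2, \mathrm{sub}}$ gives the decomposition
\begin{align*}
	u \left( t \right) - \cal{A}_{2, \mathrm{sub}} \left( t \right) = \left( e^{t \Delta} u_{0} - \cal{A}_{1, \mathrm{sup}} \left( t \right) \right) + \int_{0}^{t} a \cdot \nabla e^{\left( t- \tau \right) \Delta} \left( f \left( u \left( \tau \right) \right) - f \left( \cal{A}_{1, \mathrm{sup}} \left( \tau \right) \right) \right) d \tau .
\end{align*}
The linear part is handled by the heat-semigroup asymptotics (Lemma \ref{lem:heat_asymp}): since $u_{0} \in L^{1}_{1}$, it is $O ( t^{- \frac{n}{2} ( 1- \frac{1}{q} ) - \frac{1}{2}} )$ in $L^{q}$. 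All the work is in the Duhamel remainder.

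For the nonlinear part, homogeneity of $f$ gives the pointwise bound
\begin{align*}
	\abs{f \left( u \right) - f \left( \cal{A}_{1, \mathrm{sup}} \right)} \leq C \left( \abs{u}^{p-1} + \abs{\cal{A}_{1, \mathrm{sup}}}^{p-1} \right) \abs{u- \cal{A}_{1, \mathrm{sup}}}.
\end{align*}
Combined with Proposition \ref{pro:P_global} and the first-order estimate \eqref{eq:P_1st-asymp_super} (here $p<1+2/n$, so the remainder is $t^{-\sigma}$ times the heat rate), this yields
\begin{align*}
	\norm{f \left( u \left( \tau \right) \right) - f \left( \cal{A}_{1, \mathrm{sup}} \left( \tau \right) \right)}_{L^{1}} \leq C \tau^{- \frac{n}{2} \left( p-1 \right) - \sigma} = C \tau^{- \frac{1}{2} -2 \sigma} \quad \left( \tau >1 \right),
\end{align*}
with a bounded $L^{1} \cap L^{\infty}$ contribution for $\tau \leq 1$. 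We then split the time integral at $t/2$.

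On $[t/2, t]$ we put the gradient on the heat kernel, using $\norm{\nabla e^{s \Delta} \varphi}_{L^{q}} \leq C s^{- \frac{1}{2}} \norm{\varphi}_{L^{q}}$ together with the $L^{q}$ version of the bound above (gaining the extra $\tau^{- \frac{n}{2} ( 1- \frac{1}{q} )}$). Since $s^{- 1/2}$ is integrable, this piece contributes $t^{- \frac{n}{2} ( 1- \frac{1}{q} ) -2 \sigma}$.

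The piece on $[0, t/2]$ is the main obstacle, and it is where the trichotomy comes from. Using $\norm{\nabla e^{( t- \tau ) \Delta} \varphi}_{L^{q}} \leq C t^{- \frac{n}{2} ( 1- \frac{1}{q} ) - \frac{1}{2}} \norm{\varphi}_{L^{1}}$ for $\tau \leq t/2$, the contribution is
\begin{align*}
	t^{- \frac{n}{2} \left( 1- \frac{1}{q} \right) - \frac{1}{2}} \int_{1}^{t/2} \tau^{- \frac{1}{2} -2 \sigma} d \tau .
\end{align*}
The integral is of order $t^{\frac{1}{2} -2 \sigma}$ when $2 \sigma < \frac{1}{2}$, of order $\log t$ when $2 \sigma = \frac{1}{2}$, and bounded when $2 \sigma > \frac{1}{2}$. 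Since $2 \sigma = \frac{1}{2}$ exactly when $n ( p-1 ) = \frac{3}{2}$, i.e.\ $p=1+ \frac{3}{2n}$, this reproduces the three rates in \eqref{eq:P_2nd-asymp_sub}.

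One point needs care: the singularity at $\tau =0$. Here we rely on $\cal{A}_{1, \mathrm{sup}} ( \tau )$ being singular only like $\tau^{- \frac{n}{2} ( 1- \frac{1}{q} )}$ in $L^{q}$, together with $u$ being bounded in $L^{1} \cap L^{\infty}$. For $\tau \leq 1$ we bound the two pieces of the difference separately: $\norm{f ( \cal{A}_{1, \mathrm{sup}} ( \tau ) )}_{L^{1}} \leq C \tau^{- \frac{n}{2} ( p-1 )}$, which is integrable near $0$ because $\frac{n}{2} ( p-1 ) <1$ when $p<1+2/n$. This keeps the small-$\tau$ contribution within the stated rates. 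The estimates are stated only for $t>2$, which avoids trouble with $\log t$ near $t=1$.
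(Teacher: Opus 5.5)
Your argument is correct and is the standard proof of this estimate. You handle the linear part by Lemma \ref{lem:heat_asymp}~(2), derive $\|f(u(\tau))-f(\mathcal{A}_{1,\mathrm{sup}}(\tau))\|_{L^q}\le C\tau^{-\frac n2(1-\frac1q)-\frac12-2\sigma}$ from Proposition \ref{pro:P_1st-asymp_super}, and split the Duhamel integral at $t/2$, so that the trichotomy comes from $\int_1^{t/2}\tau^{-1/2-2\sigma}\,d\tau$. The paper itself gives no proof and only quotes the result from \cite{Kusaba}, but your route uses exactly the time-splitting and heat-kernel estimates the paper employs in Lemma \ref{lem:P_Duhamel_decay} and in the proof of Theorem \ref{thm:P_1st-asymp}.

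Two slips are cosmetic only. First, the $\tau\le 1$ piece is not bounded in $L^1\cap L^\infty$: only its $L^1$ norm, which is $\le C(1+\tau^{-\frac n2(p-1)})$ and integrable since $p<1+2/n$, is controlled, and as you note that is all you use. Second, on $1<\tau\le 2$, where the first-order estimate is not stated, you should replace it by the trivial bounded estimates.
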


\begin{prop}[{\cite{Zuazua_1993, Kusaba, Yamamoto_arXiv}}] \label{pro:P_2nd-asymp_cri}
	Let $p=1+2/n$ and let $u_{0} \in ( L^{1}_{1} \cap L^{\infty} ) \left( \R^{n} \right)$.
	Let $u \in X$ be the global solution to \eqref{P} given in Proposition \ref{pro:P_global}.
	Then for any $q \in \left[ 1, \infty \right]$, there exists $C>0$ such that the estimate
	\begin{align}
		\label{eq:P_2nd-asymp_cri}
		\norm{u \left( t \right) - \cal{A}_{2, \mathrm{cri}} \left( t \right)}_{L^{q}}
		\leq Ct^{- \frac{n}{2} \left( 1- \frac{1}{q} \right) - \frac{1}{2}}
	\end{align}
	holds for all $t>2$, where
	\begin{align*}
		\cal{A}_{2, \mathrm{cri}} \left( t \right) \coloneqq \cal{A}_{1, \mathrm{sup}} \left( t \right) +a \cdot \nabla G_{t} \int_{1}^{t} \cal{M}_{0} \left( f \left( \cal{A}_{1, \mathrm{sup}} \left( \tau \right) \right) \right) d \tau.
	\end{align*}
\end{prop}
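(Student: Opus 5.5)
The estimate \eqref{eq:P_2nd-asymp_cri} will follow from the Duhamel formula \eqref{eq:P_integral}. I plan to split $u(t)-\cal{A}_{2,\mathrm{cri}}(t)$ into a linear part and a nonlinear part and show that each is $O(t^{-\frac{n}{2}(1-\frac{1}{q})-\frac12})$ in $L^{q}$.

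\textbf{Linear part.} This part is $e^{t\Delta}u_{0}-\cal{M}_{0}(u_{0})G_{t}$. Since $u_{0}\in L^{1}_{1}$, the first-order heat asymptotics (Lemma \ref{lem:heat_asymp}) give the bound $Ct^{-\frac{n}{2}(1-\frac1q)-\frac12}$ directly.

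\textbf{Nonlinear part.} Write $N(t)=\int_{0}^{t}a\cdot\nabla e^{(t-\tau)\Delta}f(u(\tau))\,d\tau$ and decompose
\begin{align*}
	N(t)-a\cdot\nabla G_{t}\int_{1}^{t}\cal{M}_{0}(f(\cal{A}_{1,\mathrm{sup}}(\tau)))\,d\tau = I_{1}+I_{2}+I_{3}+I_{4},
\end{align*}
where the four pieces are as follows.
\begin{itemize}
	\item $I_{1}=\int_{0}^{t}a\cdot\nabla e^{(t-\tau)\Delta}\bigl(f(u)-f(\cal{A}_{1,\mathrm{sup}})\bigr)\,d\tau$, with the range $\tau\in(0,1)$ handled separately, since $\cal{A}_{1,\mathrm{sup}}$ is singular at $\tau=0$ while $u$ is bounded.
	\item $I_{2}=\int_{t/2}^{t}a\cdot\nabla e^{(t-\tau)\Delta}f(\cal{A}_{1,\mathrm{sup}}(\tau))\,d\tau$.
	\item $I_{3}=\int_{1}^{t/2}a\cdot\nabla\bigl(e^{(t-\tau)\Delta}f(\cal{A}_{1,\mathrm{sup}}(\tau))-\cal{M}_{0}(f(\cal{A}_{1,\mathrm{sup}}(\tau)))G_{t}\bigr)\,d\tau$.
	\item $I_{4}=-a\cdot\nabla G_{t}\int_{t/2}^{t}\cal{M}_{0}(f(\cal{A}_{1,\mathrm{sup}}(\tau)))\,d\tau$.
\end{itemize}

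\textbf{Estimating $I_{2}$ and $I_{4}$.} At $p=1+2/n$ we have $\|f(\cal{A}_{1,\mathrm{sup}}(\tau))\|_{L^{r}}\sim\tau^{-\frac n2(p-1)-\frac n2(1-\frac1r)}=\tau^{-1-\frac n2(1-\frac1r)}$. Hence $\cal{M}_{0}(f(\cal{A}_{1,\mathrm{sup}}(\tau)))=c\,\tau^{-1}$, and the integral over $[t/2,t]$ in $I_{4}$ is $O(1)$. Then $\|\nabla G_{t}\|_{L^{q}}$ gives $I_{4}=O(t^{-\frac n2(1-\frac1q)-\frac12})$. For $I_{2}$, I will use the smoothing estimate $\|\nabla e^{s\Delta}\varphi\|_{L^{q}}\le Cs^{-\frac12}\|\varphi\|_{L^{q}}$ with $\tau\sim t$. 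This gives $\int_{t/2}^{t}(t-\tau)^{-\frac12}\,d\tau\;t^{-1-\frac n2(1-\frac1q)}\lesssim t^{-\frac12-\frac n2(1-\frac1q)}$.

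\textbf{Estimating $I_{3}$.} For $I_{3}$ I use that $f(\cal{A}_{1,\mathrm{sup}}(\tau))\in L^{1}_{1}$ with $\||x|f(\cal{A}_{1,\mathrm{sup}}(\tau))\|_{L^{1}}\sim\tau^{-1/2}$. The first-order heat expansion applied to $\nabla e^{(t-\tau)\Delta}$ then leaves two contributions.
\begin{itemize}
	\item A moment term, bounded by $t^{-\frac n2(1-\frac1q)-1}\tau^{-1/2}$. Its $\tau$-integral over $[1,t/2]$ is $O(t^{-\frac n2(1-\frac1q)-\frac12})$.
	\item A time-shift term $\nabla(G_{t-\tau}-G_{t})$, bounded by $\tau t^{-1}\|\nabla G_{t}\|_{L^{q}}\,\tau^{-1}$. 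Integrating over $[1,t/2]$ gives $O(t^{-\frac n2(1-\frac1q)-\frac12})$.
\end{itemize}
No logarithm appears here; the only potential logarithm, from $\int^{t}\tau^{-1}\,d\tau$, is exactly what the profile $\cal{A}_{2,\mathrm{cri}}$ absorbs.

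\textbf{Main obstacle: $I_{1}$.} I expect $I_{1}$ to be the hardest term. The pointwise bound $|f(u)-f(\cal{A}_{1,\mathrm{sup}})|\le C(|u|^{p-1}+|\cal{A}_{1,\mathrm{sup}}|^{p-1})|u-\cal{A}_{1,\mathrm{sup}}|$ and Proposition \ref{pro:P_1st-asymp_super} give only $t^{-\frac12}\log t$ decay for $u-\cal{A}_{1,\mathrm{sup}}$ at $p=1+2/n$. This yields $\|f(u)-f(\cal{A}_{1,\mathrm{sup}})\|_{L^{1}}\lesssim\tau^{-\frac32}\log\tau$, which is integrable. So the part of $I_{1}$ with $\tau<t/2$ is $O(t^{-\frac n2(1-\frac1q)-\frac12})$ with no logarithmic loss. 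The part with $\tau>t/2$ gains an extra $t^{-1}\log t$, which is again harmless.

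The delicate point is that the decay of $\nabla G_{t-\tau}$ on $L^{1}\to L^{q}$ near $\tau=t$ is not integrable. There I will instead use $L^{q}\to L^{q}$ smoothing, since $(t-\tau)^{-1/2}$ is integrable, together with $L^{q}$ bounds on the difference. The initial layer $\tau\in(0,1)$ is controlled using the bound $u\in L^{1}\cap L^{\infty}$ from Proposition \ref{pro:P_global} and $\|f(\cal{A}_{1,\mathrm{sup}}(\tau))\|_{L^{1}}\sim\tau^{-1}$. Because this last quantity is not integrable at $0$, on $(0,1)$ I will keep $f(u)$ alone and not subtract $f(\cal{A}_{1,\mathrm{sup}})$; this matches the lower limit $1$ in the definition of $\cal{A}_{2,\mathrm{cri}}$.
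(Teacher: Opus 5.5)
Your argument is correct. The paper gives no proof of this proposition; it is quoted from the literature.

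Your route is the standard one:
\begin{itemize}
\item the Duhamel formula, with Lemma \ref{lem:heat_asymp}~(2) for the linear part;
\item the first-order rate $t^{-\frac{1}{2}}\log t$ from Proposition \ref{pro:P_1st-asymp_super}, which makes $\norm{f(u(\tau))-f(\cal{A}_{1,\mathrm{sup}}(\tau))}_{L^{1}}\lesssim\tau^{-\frac{3}{2}}\log\tau$ integrable;
\item a Taylor expansion in time for $\nabla(G_{t-\tau}-G_{t})$, as in the term $J_{1}$ of the proof of Theorem \ref{thm:P_1st-asymp};
\item absorption of the only logarithm, $\int_{1}^{t}\tau^{-1}\,d\tau$, into $\cal{A}_{2,\mathrm{cri}}$.
\end{itemize}
All the exponents check out at $p=1+2/n$.

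There are two cosmetic slips. The bound $\tau^{-\frac{3}{2}}\log\tau$ only holds for $\tau>2$; on $[1,2]$ use uniform boundedness instead. On $[t/2,t]$ the gain over the target rate is $t^{-\frac{1}{2}}\log t$ rather than $t^{-1}\log t$, which is still harmless.
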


\begin{prop}[{\cite{Zuazua_1993, Kusaba}}] \label{pro:P_2nd-asymp_super}
	Let $p>1+2/n$ and let $u_{0} \in ( L^{1}_{1} \cap L^{\infty} ) \left( \R^{n} \right)$.
	Let $u \in X$ be the global solution to \eqref{P} given in Proposition \ref{pro:P_global}.
	Then for any $q \in \left[ 1, \infty \right]$, it holds that
	\begin{align}
		\label{eq:P_2nd-asymp_sup}
		\lim_{t \to \infty} t^{\frac{n}{2} \left( 1- \frac{1}{q} \right) + \frac{1}{2}} \norm{u \left( t \right) - \cal{A}_{2, \mathrm{sup}} \left( t \right)}_{L^{q}} =0,
	\end{align}
	where
	\begin{align*}
		\cal{A}_{2, \mathrm{sup}} \left( t \right) &\coloneqq \cal{A}_{1, \mathrm{sup}} \left( t \right) - \left( \cal{M}_{1} \left( u_{0} \right) -a \cal{M}_{0} \left( \psi_{0} \right) \right) \cdot \nabla G_{t}, \\
		\cal{M}_{1} \left( \varphi \right) &\coloneqq \left( \cal{M}_{e_{1}} \left( \varphi \right), \ldots, \cal{M}_{e_{n}} \left( \varphi \right) \right), \\
		\cal{M}_{e_{j}} \left( \varphi \right) &\coloneqq \int_{\R^{n}} y_{j} \varphi \left( y \right) dy, \\
		\psi_{0} &\coloneqq \int_{0}^{\infty} f \left( u \left( \tau \right) \right) d \tau.
	\end{align*}
\end{prop}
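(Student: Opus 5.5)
We write the remainder using the Duhamel formula \eqref{eq:P_integral} and decompose it into a linear part and a nonlinear part,
\begin{align*}
	u(t) - \cal{A}_{2, \mathrm{sup}}(t) = \Bigl( e^{t\Delta}u_0 - \cal{M}_0(u_0) G_t + \cal{M}_1(u_0) \cdot \nabla G_t \Bigr) + \Bigl( \int_0^t a \cdot \nabla e^{(t-\tau)\Delta} f(u(\tau))\, d\tau - a\cdot \nabla G_t\, \cal{M}_0(\psi_0) \Bigr).
\end{align*}
\emph{Step 1: the linear part.} We treat it by the standard second-order Taylor-type result for the heat semigroup with $u_0 \in L^1_1$, i.e.\ the $o(t^{-\frac n2(1-\frac1q)-\frac12})$ analogue of Lemma \ref{lem:heat_asymp}. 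If only a rate version is available, we approximate $u_0$ in $L^1_1$ by compactly supported functions, for which the error is $O(t^{-\frac n2(1-\frac1q)-1})$; the approximation error contributes $\varepsilon\, t^{-\frac n2(1-\frac1q)-\frac12}$ uniformly in $t$.

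\emph{Step 2: the nonlinear part.} First we check that $\psi_0$ is well defined in $L^1$. By \eqref{eq:P_decay_gene},
\begin{align*}
	\norm{f(u(\tau))}_{L^1} \le \norm{u}_{L^\infty}^{p-1}\norm{u}_{L^1} \le C(1+\tau)^{-\frac n2(p-1)},
\end{align*}
and $\frac n2(p-1) > 1$ since $p > 1+2/n$. So $\cal{M}_0(\psi_0)$ makes sense, and moreover
\begin{align*}
	\int_t^\infty \norm{f(u)}_{L^1}\, d\tau = O(t^{1-\frac n2(p-1)}) = o(t^{-\frac12})
\end{align*}
once multiplied appropriately; the precise bookkeeping comes next. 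We split the integral at $\tau = t/2$ (or at $\delta t$).

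For $\tau \in [0, t/2]$, we write
\begin{align*}
	\nabla e^{(t-\tau)\Delta} f(u(\tau)) - \nabla G_t\, \cal{M}_0(f(u(\tau))) = \bigl[\nabla G_{t-\tau} - \nabla G_t\bigr] * f + \bigl(\nabla G_t * f - \nabla G_t\, \cal{M}_0(f)\bigr).
\end{align*}
\begin{itemize}
	\item The first bracket has $L^q$ norm $\lesssim \tau\, t^{-\frac n2(1-\frac1q)-\frac32}\norm{f}_{L^1}$.
	\item The second term is $\lesssim t^{-\frac n2(1-\frac1q)-1}\norm{|y| f(u(\tau))}_{L^1}$.
\end{itemize}
This requires a moment bound on $u$. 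To avoid it, we use dominated convergence instead: $t^{\frac n2(1-\frac1q)+\frac12}\,\norm{\nabla G_t*\varphi - \nabla G_t\,\cal{M}_0(\varphi)}_{L^q} \to 0$ for each fixed $\varphi \in L^1$, and it is bounded by $C\norm{\varphi}_{L^1}$. Applied with $\varphi = f(u(\tau))$, $\tau \le t/2$, and the integrable majorant $(1+\tau)^{-\frac n2(p-1)}$, this gives $o(1)$ after scaling. We handle the first bracket the same way: $\tau\, t^{-1}\mathbf 1_{\tau\le t/2} \to 0$ pointwise and is bounded by $1$. The tail $-a\cdot\nabla G_t\int_{t/2}^\infty \cal{M}_0(f)\,d\tau$ is $O(t^{-\frac n2(1-\frac1q)-\frac12}\, t^{1-\frac n2(p-1)})$, which is $o$ of the target rate.

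For $\tau \in [t/2, t]$, we bound
\begin{align*}
	\norm{\nabla e^{(t-\tau)\Delta} f(u)}_{L^q} \le C(t-\tau)^{-\frac12}\norm{f(u(\tau))}_{L^q},
\end{align*}
with $\norm{f(u)}_{L^q} \le \norm{u}_{L^\infty}^{p-1}\norm{u}_{L^q} \lesssim t^{-\frac n2(p-1)-\frac n2(1-\frac1q)}$. Integrating gives $t^{\frac12-\frac n2(p-1)-\frac n2(1-\frac1q)}$, again $o(t^{-\frac n2(1-\frac1q)-\frac12})$ because $\frac n2(p-1) > 1$.

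\emph{Main obstacle.} The delicate point is the borderline integrability at $p$ near $1+2/n$. There the first-bracket estimate with weight $\tau/t$ must be closed by dominated convergence rather than by a rate. We expect this to be the step requiring the most care: uniformity in $q$, including $q = \infty$, and justifying the use of the pointwise limit.
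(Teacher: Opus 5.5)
Your proof is correct and follows essentially the route the paper uses for the zero-mass analogue, Theorem \ref{thm:P_1st-asymp}; the paper itself only cites this proposition from \cite{Zuazua_1993, Kusaba} and does not reprove it. Both arguments split at $t/2$, use a mean-value bound on $\nabla G_{t-\tau}-\nabla G_t$, bound $[t/2,t]$ and the tail $\int_{t/2}^{\infty}$ directly, and finish with Lemma \ref{lem:heat_asymp}. The one difference is minor: the paper subtracts $a\cdot\nabla e^{t\Delta}\psi_0$ and applies Lemma \ref{lem:heat_asymp}~(1) once to $\psi_0\in L^1$, whereas you apply it to each $f(u(\tau))$ and integrate in $\tau$ by dominated convergence, which is equally valid.

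Your ``main obstacle'' is not one. Because $\frac n2(p-1)>1$ strictly, for $t\ge 1$
\[
t^{-1}\int_0^{t/2}\tau(1+\tau)^{-\frac n2(p-1)}\,d\tau\le C\bigl(t^{1-\frac n2(p-1)}+t^{-1}\log(2+t)\bigr),
\]
so the first bracket already has an explicit rate and dominated convergence is not needed there. Since $q$ is fixed throughout, no uniformity in $q$ is required, and $q=\infty$ is covered by Lemma \ref{lem:heat_decay}. Step 1 is exactly Lemma \ref{lem:heat_asymp}~(3) with $\alpha=0$, so no approximation argument is needed.
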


The second-order asymptotic profiles $\cal{A}_{2, \mathrm{sub}} \left( t \right)$, $\cal{A}_{2, \mathrm{cri}} \left( t \right)$, and $\cal{A}_{2, \mathrm{sup}} \left( t \right)$ have the following self-similar structures (see \cite{Kusaba} for example):
\begin{align*}
	\cal{A}_{2, \mathrm{sub}} \left( t \right) - \cal{A}_{1, \mathrm{sup}} \left( t \right) &=t^{- \sigma} \delta_{t} \cal{S}_{2, \mathrm{sub}}, \\
	\cal{A}_{2, \mathrm{cri}} \left( t \right) - \cal{A}_{1, \mathrm{sup}} \left( t \right) &=t^{- \frac{1}{2}} \left( \log t \right) \delta_{t} \cal{S}_{2, \mathrm{cri}}, \\
	\cal{A}_{2, \mathrm{sup}} \left( t \right) - \cal{A}_{1, \mathrm{sup}} \left( t \right) &=t^{- \frac{1}{2}} \delta_{t} \cal{S}_{2, \mathrm{sup}},
\end{align*}
where $\delta_{t}$ is the dilation acting on functions $\varphi \colon \R^{n} \to \R$ as
\begin{align*}
	\left( \delta_{t} \varphi \right) \left( x \right) =t^{- \frac{n}{2}} \varphi \left( t^{- \frac{1}{2}} x \right), \qquad x \in \R^{n}
\end{align*}
and $\cal{S}_{2, \mathrm{sub}}$, $\cal{S}_{2, \mathrm{cri}}$, and $\cal{S}_{2, \mathrm{sup}}$ are functions on $\R^{n}$ defined by
\begin{align*}
	\cal{S}_{2, \mathrm{sub}} &\coloneqq \int_{0}^{1} a \cdot \nabla e^{\left( 1- \theta \right) \Delta} f \left( \cal{A}_{1, \mathrm{sup}} \left( \theta \right) \right) d \theta, \\
	\cal{S}_{2, \mathrm{cri}} &\coloneqq \frac{1}{4 \pi} \left( 1+ \frac{2}{n} \right)^{- \frac{n}{2}} f \left( \cal{M}_{0} \left( u_{0} \right) \right) a \cdot \nabla G_{1}, \\
	\cal{S}_{2, \mathrm{sup}} &\coloneqq - \left( \cal{M}_{1} \left( u_{0} \right) -a \cal{M}_{0} \left( \psi_{0} \right) \right) \cdot \nabla G_{1}.
\end{align*}
We note that by using the dilation $\delta_{t}$, the self-similarity of the first-order asymptotic profile $\cal{A}_{1, \mathrm{sup}} \left( t \right)$ is described as
\begin{align*}
	\cal{A}_{1, \mathrm{sup}} \left( t \right) = \delta_{t} \cal{A}_{1, \mathrm{sup}} \left( 1 \right).
\end{align*}
By combining Propositions \ref{pro:P_2nd-asymp_sub}, \ref{pro:P_2nd-asymp_cri}, and \ref{pro:P_2nd-asymp_super} with the self-similar structures of the second-order asymptotic profiles, the optimality of the decay rates in \eqref{eq:P_1st-asymp_super} can be determined.

\begin{prop}[{cf. \cite{Kusaba}}] \label{pro:P_1st-asymp_optimal}
	Under the same assumption as in Proposition \ref{pro:P_1st-asymp_super}, the following assertions hold:
	\begin{itemize}
		\item[\rm (1)]
			If $p<1+2/n$, then for any $q \in \left[ 1, \infty \right]$, it holds that
			\begin{align*}
				\lim_{t \to \infty} t^{\frac{n}{2} \left( 1- \frac{1}{q} \right) + \sigma} \norm{u \left( t \right) - \cal{A}_{1, \mathrm{sup}} \left( t \right)}_{L^{q}} = \norm{\cal{S}_{2, \mathrm{sub}}}_{L^{q}}.
			\end{align*}
			Moreover, $\norm{\cal{S}_{2, \mathrm{sub}}}_{L^{q}} >0$ if and only if $\cal{M}_{0} \left( u_{0} \right) \neq 0$.
		\item[\rm (2)]
			If $p=1+2/n$, then for any $q \in \left[ 1, \infty \right]$, it holds that
			\begin{align*}
				\lim_{t \to \infty} t^{\frac{n}{2} \left( 1- \frac{1}{q} \right) + \frac{1}{2}} \left( \log t \right)^{-1} \norm{u \left( t \right) - \cal{A}_{1, \mathrm{sup}} \left( t \right)}_{L^{q}} = \norm{\cal{S}_{2, \mathrm{cri}}}_{L^{q}}.
			\end{align*}
			Moreover, $\norm{\cal{S}_{2, \mathrm{cri}}}_{L^{q}} >0$ if and only if $\cal{M}_{0} \left( u_{0} \right) \neq 0$.
		\item[\rm (3)]
			If $p>1+2/n$, then for any $q \in \left[ 1, \infty \right]$, it holds that
			\begin{align*}
				\lim_{t \to \infty} t^{\frac{n}{2} \left( 1- \frac{1}{q} \right) + \frac{1}{2}} \norm{u \left( t \right) - \cal{A}_{1, \mathrm{sup}} \left( t \right)}_{L^{q}} = \norm{\cal{S}_{2, \mathrm{sup}}}_{L^{q}}.
			\end{align*}
			Moreover, $\norm{\cal{S}_{2, \mathrm{sup}}}_{L^{q}} >0$ if and only if $\cal{M}_{e_{j}} \left( u_{0} \right) -a_{j} \cal{M}_{0} \left( \psi_{0} \right) \neq 0$ for some $j \in \left\{ 1, \ldots, n \right\}$.
	\end{itemize}
\end{prop}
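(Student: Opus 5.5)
The plan is to combine the second-order expansions of Propositions \ref{pro:P_2nd-asymp_sub}, \ref{pro:P_2nd-asymp_cri}, \ref{pro:P_2nd-asymp_super} with the self-similar structures of $\cal{A}_{2,\ast}(t)-\cal{A}_{1,\mathrm{sup}}(t)$. By a change of variables, $\norm{\delta_{t}\varphi}_{L^{q}} = t^{-\frac{n}{2}(1-\frac{1}{q})}\norm{\varphi}_{L^{q}}$. In case (1) the triangle inequality gives
\begin{align*}
	\abs{ t^{\frac{n}{2}(1-\frac{1}{q})+\sigma}\norm{u(t)-\cal{A}_{1,\mathrm{sup}}(t)}_{L^{q}} - \norm{\cal{S}_{2,\mathrm{sub}}}_{L^{q}} } \leq t^{\frac{n}{2}(1-\frac{1}{q})+\sigma}\norm{u(t)-\cal{A}_{2,\mathrm{sub}}(t)}_{L^{q}}.
\end{align*}
By \eqref{eq:P_2nd-asymp_sub}, the right-hand side is $O(t^{-\sigma})$, $O(t^{\sigma-\frac12}\log t)$, or $O(t^{\sigma-\frac12})$. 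Since $0<\sigma<\frac12$ when $1+1/n<p<1+2/n$, each of these tends to zero. Case (2) works the same way: the remainder from \eqref{eq:P_2nd-asymp_cri} contributes $O((\log t)^{-1})$. Case (3) follows directly from \eqref{eq:P_2nd-asymp_sup}. The only bookkeeping needed is that each $\cal{S}_{2,\ast}$ lies in $L^{q}$ for every $q$. This is clear for $\cal{S}_{2,\mathrm{cri}}$ and $\cal{S}_{2,\mathrm{sup}}$, which are multiples of derivatives of $G_{1}$. For $\cal{S}_{2,\mathrm{sub}}$ it follows from the $L^{1}$--$L^{q}$ estimate of $\nabla e^{(1-\theta)\Delta}$ together with $\norm{f(\cal{A}_{1,\mathrm{sup}}(\theta))}_{L^{r}}\lesssim\theta^{-\frac n2(p-\frac1r)}$. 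Choosing $r$ suitably makes the $\theta$-integral converge at both endpoints, precisely because $\sigma<\frac12$ and $p>1+1/n$.

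For the characterizations of positivity, case (3) is immediate. The functions $\partial_{j}G_{1}$ are linearly independent, so $b\cdot\nabla G_{1}\equiv0$ if and only if $b=0$, where $b=\cal{M}_{1}(u_{0})-a\cal{M}_{0}(\psi_{0})$. In case (2), $a\ne0$ makes $a\cdot\nabla G_{1}\not\equiv0$. Also $f(M)=0$ if and only if $M=0$ for each of the three admissible forms of $f$. Hence $\cal{S}_{2,\mathrm{cri}}\not\equiv0$ if and only if $\cal{M}_{0}(u_{0})\ne0$.

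The main obstacle is case (1), where we must show $\cal{S}_{2,\mathrm{sub}}\not\equiv0$ when $M\coloneqq\cal{M}_{0}(u_{0})\ne0$; the converse is trivial since then $f(0)=0$. By homogeneity, $f(\cal{A}_{1,\mathrm{sup}}(\theta))=f(M)G_{\theta}^{p}$ up to a sign convention for $\abs{\xi}^{p}$ and $\abs{\xi}^{p-1}\xi$, with $f(M)\ne0$. So it suffices to show that
\begin{align*}
	H\coloneqq\int_{0}^{1}e^{(1-\theta)\Delta}G_{\theta}^{p}\,d\theta
\end{align*}
satisfies $a\cdot\nabla H\not\equiv0$. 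I would work on the Fourier side. The integrand is a positive function times a Gaussian, so $\widehat{H}(\xi)=\int_{0}^{1}e^{-(1-\theta)\abs{\xi}^{2}}\widehat{G_{\theta}^{p}}(\xi)\,d\theta$. Here $G_{\theta}^{p}$ is a positive multiple of $G_{\theta/p}$, so its transform is a positive multiple of $e^{-\theta\abs{\xi}^{2}/p}$. Hence $\widehat{H}(\xi)>0$ for every $\xi$. Then $\widehat{a\cdot\nabla H}(\xi)=i(a\cdot\xi)\widehat{H}(\xi)$, which is nonzero whenever $a\cdot\xi\ne0$, and so $a\cdot\nabla H\not\equiv0$. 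Alternatively, $H$ is a strictly positive radial function with nonvanishing radial derivative, so $a\cdot\nabla H$ is nonzero away from the hyperplane $a\cdot x=0$. Since the function is continuous and not identically zero, its $L^{q}$ norm is positive for every $q\in[1,\infty]$, which completes the argument.
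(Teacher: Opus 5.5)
Your proposal is correct and follows the route the paper indicates (the paper only sketches it and defers the details to Kusaba): the triangle inequality against the second-order expansions of Propositions \ref{pro:P_2nd-asymp_sub}--\ref{pro:P_2nd-asymp_super}, the scaling $\norm{\delta_{t} \varphi}_{L^{q}} = t^{-\frac{n}{2}(1-\frac{1}{q})} \norm{\varphi}_{L^{q}}$, and positivity read off from the explicit form of $\cal{S}_{2,\ast}$, where your observation $\widehat{H}>0$ cleanly settles the case $p<1+2/n$. One small imprecision: for $n\geq 2$ and large $q$ (e.g.\ $q=\infty$), no single exponent $r$ makes the $\theta$-integral defining $\cal{S}_{2,\mathrm{sub}}$ converge at both endpoints, so split at $\theta=1/2$ and use $r=1$ near $0$ and $r=q$ near $1$; only $\sigma<\frac{1}{2}$ is needed for this, although $\cal{S}_{2,\mathrm{sub}}\in L^{q}$ in fact already follows from the expansion itself.
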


The above proposition implies that the decay rates in \eqref{eq:P_1st-asymp_super} are all optimal and provides necessary and sufficient conditions for attaining the optimal decay rates.
From this result, we conclude that the exponent $1+2/n$ is critical for the first-order asymptotic expansion.

Following this argument on the first-order asymptotic expansion, the authors in \cite{Fukuda-Sato, Kusaba} investigated the optimality of the decay rates in \eqref{eq:P_2nd-asymp_sub} and \eqref{eq:P_2nd-asymp_cri} via the third-order asymptotic expansions with self-similar structures.
We note that the decay rate in \eqref{eq:P_2nd-asymp_sup} seems optimal since it coincides with that in the second-order asymptotic expansion of the free solution (see Lemma \ref{lem:heat_asymp} (3) in Section \ref{sec:Heat}).
In \cite{Fukuda-Sato, Kusaba}, the following propositions were established.

\begin{prop}[{\cite{Kusaba}}] \label{pro:P_2nd-asymp_sub_optimal}
	Under the same assumption as in Proposition \ref{pro:P_2nd-asymp_sub}, the following assertions hold:
	\begin{itemize}
		\item[\rm (1)]
			If $p<1+3/ ( 2n )$, then for any $q \in \left[ 1, \infty \right]$, it holds that
			\begin{align*}
				\lim_{t \to \infty} t^{\frac{n}{2} \left( 1- \frac{1}{q} \right) +2 \sigma} \norm{u \left( t \right) - \cal{A}_{2, \mathrm{sub}} \left( t \right)}_{L^{q}} = \norm{\cal{S}_{3, \mathrm{sub}}}_{L^{q}},
			\end{align*}
			where
			\begin{align*}
				\cal{S}_{3, \mathrm{sub}} \coloneqq \int_{0}^{1} a \cdot \nabla e^{\left( 1- \theta \right) \Delta} \left( \left( \cal{A}_{2, \mathrm{sub}} \left( \theta \right) - \cal{A}_{1, \mathrm{sup}} \left( \theta \right) \right) f' \left( \cal{A}_{1, \mathrm{sup}} \left( \theta \right) \right) \right) d \theta.
			\end{align*}
			Moreover, $\norm{\cal{S}_{3, \mathrm{sub}}}_{L^{q}} >0$ if and only if $\cal{M}_{0} \left( u_{0} \right) \neq 0$.
		\item[\rm (2)]
			If $p=1+3/ ( 2n )$, then for any $q \in \left[ 1, \infty \right]$, there exists $C>0$ such that the estimate
			\begin{align}
				\label{eq:P_2nd-asymp_cri_imp}
				\norm{u \left( t \right) - \cal{A}_{2, \mathrm{sub}} \left( t \right)}_{L^{q}} \leq Ct^{- \frac{n}{2} \left( 1- \frac{1}{q} \right) - \frac{1}{2}}
			\end{align}
			holds for all $t>4$.
		\item[\rm (3)]
			If $p>1+3/ ( 2n )$, then for any $q \in \left[ 1, \infty \right]$, it holds that
			\begin{align*}
				\lim_{t \to \infty} t^{\frac{n}{2} \left( 1- \frac{1}{q} \right) + \frac{1}{2}} \norm{u \left( t \right) - \cal{A}_{2, \mathrm{sub}} \left( t \right)}_{L^{q}} = \norm{\cal{S}_{3, \mathrm{sup}}}_{L^{q}},
			\end{align*}
			where
			\begin{align*}
				\cal{S}_{3, \mathrm{sup}} &\coloneqq - \left( \cal{M}_{1} \left( u_{0} \right) -a \cal{M}_{0} \left( \psi_{1} \right) \right) \cdot \nabla G_{1}, \\
				\psi_{1} &\coloneqq \int_{0}^{\infty} \left( f \left( u \left( \tau \right) \right) -f \left( \cal{A}_{1, \mathrm{sup}} \left( \tau \right) \right) \right) d \tau.
			\end{align*}
			Moreover, $\norm{\cal{S}_{3, \mathrm{sup}}}_{L^{q}} >0$ if and only if $\cal{M}_{e_{j}} \left( u_{0} \right) -a_{j} \cal{M}_{0} \left( \psi_{1} \right) \neq 0$ for some $j \in \left\{ 1, \ldots, n \right\}$.
	\end{itemize}
\end{prop}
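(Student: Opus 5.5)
The plan is to write the remainder $u(t)-\cal{A}_{2,\mathrm{sub}}(t)$ via the Duhamel formula \eqref{eq:P_integral} and split it into three pieces:
\begin{align*}
	u(t)-\cal{A}_{2,\mathrm{sub}}(t) &= \left( e^{t\Delta}u_{0}-\cal{A}_{1,\mathrm{sup}}(t) \right) + \int_{0}^{t} a\cdot\nabla e^{(t-\tau)\Delta}\left( f(u(\tau))-f(\cal{A}_{1,\mathrm{sup}}(\tau)) \right) d\tau .
\end{align*}
The nonlinear difference is decomposed further as
\begin{align*}
	f(u)-f(\cal{A}_{1,\mathrm{sup}}) = f'(\cal{A}_{1,\mathrm{sup}})\left( \cal{A}_{2,\mathrm{sub}}-\cal{A}_{1,\mathrm{sup}} \right) + R,
\end{align*}
where $R$ collects the term $f'(\cal{A}_{1,\mathrm{sup}})(u-\cal{A}_{2,\mathrm{sub}})$ and the quadratic Taylor remainder. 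By Proposition \ref{pro:P_2nd-asymp_sub} and the self-similarity, $\cal{A}_{2,\mathrm{sub}}-\cal{A}_{1,\mathrm{sup}}=t^{-\sigma}\delta_t\cal{S}_{2,\mathrm{sub}}$ has size $t^{-\sigma}$ relative to $t^{-\frac n2(1-\frac1q)}$.

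Hence the linear-in-perturbation Duhamel term
\begin{align*}
	\int_0^t a\cdot\nabla e^{(t-\tau)\Delta}\bigl(f'(\cal{A}_{1,\mathrm{sup}}(\tau))(\cal{A}_{2,\mathrm{sub}}(\tau)-\cal{A}_{1,\mathrm{sup}}(\tau))\bigr)d\tau
\end{align*}
has the scaling form $t^{-2\sigma}\delta_t\cal{S}_{3,\mathrm{sub}}$. To check this, substitute $\tau=t\theta$ and use the homogeneity of $f'$ of degree $p-1$. The source then scales like $\tau^{-\frac n2(p-1)-\sigma}$ times a profile, and $\nabla e^{(t-\tau)\Delta}$ contributes $t^{-1/2}$. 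The total exponent is $-2\sigma$ after the factor $t$ from $d\tau$. The integral converges at $\theta=0$ provided $\frac n2(p-1)+\sigma-\frac12<1$, i.e.\ $2\sigma<1$, i.e.\ $p<1+\frac{3}{2n}$. At $p=1+\frac{3}{2n}$ the singularity is logarithmic, and for larger $p$ the integral diverges near $0$. So in case (1) this term is exactly $t^{-2\sigma}\delta_t\cal{S}_{3,\mathrm{sub}}$. The remaining pieces are $O(t^{-1/2})$ from $e^{t\Delta}u_0-\cal{A}_{1,\mathrm{sup}}$ (Lemma \ref{lem:heat_asymp}, using $u_0\in L^1_1$) and $o(t^{-2\sigma})$ from $R$. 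The $R$ bound comes from iterating the bound \eqref{eq:P_2nd-asymp_sub}, which is $t^{-2\sigma}$, but only multiplied by the $f'$ factor, together with $|u-\cal{A}_{1,\mathrm{sup}}|^2$ terms of order $t^{-2\sigma}$ times extra decay. Since $\delta_t$ preserves the scaled $L^q$ norm, the limit in (1) follows.

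Positivity of $\norm{\cal{S}_{3,\mathrm{sub}}}_{L^q}$ is where I expect the main obstacle. If $\cal{M}_0(u_0)=0$, everything vanishes trivially. Otherwise, homogeneity gives $\cal{S}_{3,\mathrm{sub}}=c(M)\,a\cdot\nabla\Phi$ for an explicit radial-type convolution $\Phi$, where $c(M)$ is a nonzero multiple of $M^{2p-1}$ up to a sign. I would show nonvanishing by testing against $a\cdot x$, i.e.\ computing a first moment. One has $\int (a\cdot x)\,a\cdot\nabla e^{s\Delta}g\,dx=-|a|^2\int g$. This reduces the question to the sign of $\int f'(G)\,a\cdot\nabla(\cdots)$-type integrals of the Gaussian, which I would evaluate explicitly via Fourier transforms or Gaussian integrals to get a nonzero constant. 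For $f=|\xi|^p$ sign issues arise; I would handle them by expressing everything via $f'(G_\theta)f(G_s)$ and integrating by parts.

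For case (3), I would proceed as in Proposition \ref{pro:P_2nd-asymp_super}. Split the $\tau$-integral at $t/2$, and for $\tau<t/2$ replace $\nabla e^{(t-\tau)\Delta}g$ by $\nabla G_t\int g$ plus an error. This uses the integrability of $f(u)-f(\cal{A}_{1,\mathrm{sup}})$ in $L^1$, valid since its decay rate $\tau^{-\frac n2(p-1)-\sigma}$ is integrable exactly when $p>1+\frac3{2n}$, which gives $\psi_1$. Combined with $e^{t\Delta}u_0-\cal{A}_{1,\mathrm{sup}}\approx -\cal{M}_1(u_0)\cdot\nabla G_t$, this yields $t^{-1/2}\delta_t\cal{S}_{3,\mathrm{sup}}+o(t^{-1/2})$. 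The nonvanishing of $\cal{S}_{3,\mathrm{sup}}$ is immediate, because the $\partial_jG_1$ are linearly independent.

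Case (2) uses the same splitting. Here the borderline integrals $\int_1^t \tau^{-1}\,d\tau$ appear, but after subtracting the $\cal{S}_{3}$-type term already contained in $\cal{A}_{2,\mathrm{sub}}$ the logarithm cancels, leaving an $O(t^{-1/2})$ bound.
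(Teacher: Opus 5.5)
The paper only quotes this proposition from an earlier work and gives no proof, so I judge your argument on its own terms. Case (3) and the analytic part of case (1) are fine: the scaling $\tau=t\theta$ and the $o(t^{-2\sigma})$ bounds on the remainder work. However, two steps fail, and both fail because of a symmetry you overlooked.

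\textbf{The symmetry.} Put $M=\cal{M}_{0}(u_{0})$ and $H_{\theta}\coloneqq\int_{0}^{\theta}e^{(\theta-s)\Delta}G_{s}^{p}\,ds$. This is a positive superposition of Gaussians, so $H_{\theta}(x)=h_{\theta}(\abs{x})$ with $h_{\theta}'<0$ on $(0,\infty)$. By homogeneity,
\begin{align*}
	g_{\theta}\coloneqq\left(\cal{A}_{2,\mathrm{sub}}(\theta)-\cal{A}_{1,\mathrm{sup}}(\theta)\right)f'\left(\cal{A}_{1,\mathrm{sup}}(\theta)\right)=f(M)f'(M)\,G_{\theta}^{p-1}\,a\cdot\nabla H_{\theta}.
\end{align*}
This is a radial function times $h_{\theta}'(\abs{x})\,a\cdot x/\abs{x}$. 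Hence it is odd under the reflection $x\mapsto x-2\abs{a}^{-2}(a\cdot x)a$, and so $\cal{M}_{0}(g_{\theta})=0$.

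\textbf{Nonvanishing in (1).} Your test gives $\int(a\cdot x)\cal{S}_{3,\mathrm{sub}}\,dx=-\abs{a}^{2}\int_{0}^{1}\cal{M}_{0}(g_{\theta})\,d\theta=0$ identically. In fact every first moment of $\cal{S}_{3,\mathrm{sub}}$ vanishes, because it is $(a\cdot\nabla)^{2}$ of a radial function, not $a\cdot\nabla$ of one. So the Gaussian integral you planned to evaluate is exactly zero and proves nothing. What works is the second moment. Since $e^{(1-\theta)\Delta}$ preserves first moments,
\begin{align*}
	\int_{\R^{n}}(a\cdot x)^{2}\cal{S}_{3,\mathrm{sub}}\,dx=-2\abs{a}^{2}f(M)f'(M)\int_{0}^{1}\int_{\R^{n}}\frac{(a\cdot x)^{2}}{\abs{x}}\,G_{\theta}^{p-1}(x)\,h_{\theta}'(\abs{x})\,dx\,d\theta .
\end{align*}
The integrand has a strict sign, and $f(M)f'(M)\neq0$ for $M\neq0$, so this moment is nonzero.

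\textbf{Case (2).} The mechanism you describe does not exist. $\cal{A}_{2,\mathrm{sub}}$ contains no term linear in $\cal{A}_{2,\mathrm{sub}}-\cal{A}_{1,\mathrm{sup}}$, so nothing is subtracted that could cancel a logarithm. At $p=1+3/(2n)$ one has $\sigma=1/4$ and $\norm{g_{\tau}}_{L^{1}}=c\tau^{-1}$. Bounding the Duhamel term of $g_{\tau}$ in norm therefore reproduces exactly the $t^{-1/2}\log t$ of \eqref{eq:P_2nd-asymp_sub}. The improvement comes from $\cal{M}_{0}(g_{\tau})=0$. Lemma \ref{lem:heat_asymp} (2) with $\abs{\alpha}=1$ and self-similarity give $\norm{a\cdot\nabla e^{(t-\tau)\Delta}g_{\tau}}_{L^{q}}\leq C(t-\tau)^{-\frac{n}{2}(1-\frac{1}{q})-1}\norm{\abs{x}g_{\tau}}_{L^{1}}=C'(t-\tau)^{-\frac{n}{2}(1-\frac{1}{q})-1}\tau^{-\frac{1}{2}}$. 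This integrates over $(0,t/2)$ to $O(t^{-\frac{n}{2}(1-\frac{1}{q})-\frac{1}{2}})$, and it also removes the non-integrable $\tau^{-1}$ at $\tau=0$. The interval $(t/2,t)$ is handled by $L^{q}$ bounds. The remaining terms are $f'(\cal{A}_{1,\mathrm{sup}})(u-\cal{A}_{2,\mathrm{sub}})$, of size $\tau^{-5/4}\log\tau$ in $L^{1}$, and the Taylor remainder. Both are integrable for $\tau\geq1$; for $\tau<1$, bound $f(u)$ and $f(\cal{A}_{1,\mathrm{sup}})$ separately.

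\textbf{Threshold computation in (1).} This step is garbled. The source has $L^{1}$ norm $\theta^{-2\sigma-\frac{1}{2}}$, so the crude bound needs $2\sigma<\frac{1}{2}$. The condition $2\sigma<1$ is what the zero-mass bound gives, and it is equivalent to $p<1+\frac{2}{n}$, not $p<1+\frac{3}{2n}$. Consequently the integral defining $\cal{S}_{3,\mathrm{sub}}$ does not diverge for $p\geq1+\frac{3}{2n}$; it merely stops being the leading term.

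\textbf{Taylor remainder.} For $n\geq2$ one has $p<2$. The Taylor remainder must then be bounded by $C\abs{u-\cal{A}_{1,\mathrm{sup}}}^{p}$, not by a quadratic expression, which would carry the unbounded weight $\abs{\cal{A}_{1,\mathrm{sup}}}^{p-2}$.
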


\begin{prop}[{\cite{Fukuda-Sato}}] \label{pro:P_2nd-asymp_cri_optimal}
	Under the same assumption as in Proposition \ref{pro:P_2nd-asymp_cri}, it holds that
	\begin{align*}
		\lim_{t \to \infty} t^{\frac{n}{2} \left( 1- \frac{1}{q} \right) + \frac{1}{2}} \norm{u \left( t \right) - \cal{A}_{2, \mathrm{cri}} \left( t \right)}_{L^{q}} = \norm{\cal{S}_{3, \mathrm{cri}}}_{L^{q}}
	\end{align*}
	for any $q \in \left[ 1, \infty \right]$, where
	\begin{align*}
		\cal{S}_{3, \mathrm{cri}} &\coloneqq - \left( \cal{M}_{1} \left( u_{0} \right) -a \cal{M}_{0} \left( \Psi_{1} \right) \right) \cdot \nabla G_{1} +f \left( \cal{M}_{0} \left( u_{0} \right) \right) \int_{0}^{1} a \cdot \nabla e^{\left( 1- \theta \right) \Delta} \cal{G}_{1} \left( \theta \right) d \theta, \\
		\cal{G}_{1} \left( \theta \right) &\coloneqq G_{\theta}^{p} - \frac{1}{4 \pi} \left( 1+ \frac{2}{n} \right)^{- \frac{n}{2}} \theta^{-1} G_{\theta}, \\
		\Psi_{1} &\coloneqq \int_{0}^{1} f \left( u \left( \tau \right) \right) d \tau + \int_{1}^{\infty} \left( f \left( u \left( \tau \right) \right) -f \left( \cal{A}_{1, \mathrm{sup}} \left( \tau \right) \right) \right) d \tau.
	\end{align*}
\end{prop}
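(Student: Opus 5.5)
The plan is to obtain the limit as a refined second-order remainder analysis at $p=1+2/n$. The difference $u(t)-\cal{A}_{2,\mathrm{cri}}(t)$ will be split into pieces that are each either self-similar of order $t^{-\frac{n}{2}(1-\frac1q)-\frac12}$ or of strictly smaller order.

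Starting from the integral form \eqref{eq:P_integral}, I write
\begin{align*}
	u(t)-\cal{A}_{2,\mathrm{cri}}(t) ={}& \bigl(e^{t\Delta}u_0-\cal{M}_0(u_0)G_t\bigr) + \int_0^{1} a\cdot\nabla e^{(t-\tau)\Delta} f(u(\tau))\,d\tau \\
	&+ \int_1^{t} a\cdot\nabla e^{(t-\tau)\Delta}\bigl(f(u(\tau))-f(\cal{A}_{1,\mathrm{sup}}(\tau))\bigr)d\tau \\
	&+ \int_1^{t} a\cdot\nabla\Bigl(e^{(t-\tau)\Delta}f(\cal{A}_{1,\mathrm{sup}}(\tau)) - G_t\,\cal{M}_0\bigl(f(\cal{A}_{1,\mathrm{sup}}(\tau))\bigr)\Bigr)d\tau .
\end{align*}
The four pieces are treated as follows.

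\emph{First piece.} By the heat asymptotics (Lemma \ref{lem:heat_asymp}), it equals $-\cal{M}_1(u_0)\cdot\nabla G_t+o(t^{-\frac n2(1-\frac1q)-\frac12})$.

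\emph{Second and third pieces.} For these I use the standard moment-splitting argument: $e^{(t-\tau)\Delta}g = \cal{M}_0(g)G_t + \text{error}$, with the time integral cut at $t/2$. This yields $a\cdot\nabla G_t\,\cal{M}_0(\Psi_1)$ up to $o(\cdot)$. The integrability $\int_1^\infty\|f(u)-f(\cal{A}_{1,\mathrm{sup}})\|_{L^1}d\tau<\infty$ is needed here. It follows from Proposition \ref{pro:P_1st-asymp_super} with rate $t^{-\frac12}\log t$ and $|f(u)-f(v)|\lesssim(|u|^{p-1}+|v|^{p-1})|u-v|$, which give a bound $\tau^{-1-\frac12}\log\tau$.

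\emph{Fourth piece.} This is explicit: $f(\cal{A}_{1,\mathrm{sup}}(\tau))=f(\cal{M}_0(u_0))G_\tau^p$, and $G_\tau^p$ has total mass $\frac{1}{4\pi}(1+\frac2n)^{-\frac n2}\tau^{-1}$ when $p=1+2/n$. Subtracting that mass times $G_\tau$ leaves $f(\cal{M}_0(u_0))\cal{G}_1$-type terms with zero mass. Applying the scaling $\tau=t\theta$ and the dilation $\delta_t$, I show that this piece equals $t^{-\frac12}\delta_t\bigl(f(\cal{M}_0(u_0))\int_{1/t}^1 a\cdot\nabla e^{(1-\theta)\Delta}\cal{G}_1(\theta)\,d\theta\bigr)$, plus the part $\int_1^t \tau^{-1}(e^{(t-\tau)\Delta}G_\tau - G_t)\,d\tau$, which vanishes identically since $e^{(t-\tau)\Delta}G_\tau=G_t$. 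Letting $1/t\to0$ gives the profile, provided $\int_0^1\|\nabla e^{(1-\theta)\Delta}\cal{G}_1(\theta)\|_{L^q}d\theta<\infty$. Near $\theta=0$ this holds because $\cal{G}_1(\theta)$ has zero mass and first moment of size $O(\theta^{-1/2})$ in $L^1_1$, so it contributes integrably. Near $\theta=1$ the singularity $(1-\theta)^{-1/2}$ is integrable.

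Collecting the terms gives $u(t)-\cal{A}_{2,\mathrm{cri}}(t)=t^{-\frac12}\delta_t\cal{S}_{3,\mathrm{cri}}+o(t^{-\frac n2(1-\frac1q)-\frac12})$. Since $\|\delta_t\varphi\|_{L^q}=t^{-\frac n2(1-\frac1q)}\|\varphi\|_{L^q}$, the claimed limit follows.

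The main obstacle is the second/third piece: showing that the error terms are genuinely $o$ rather than $O$. On $[t/2,t]$ the gradient singularity $(t-\tau)^{-1/2}$ must be paired with $L^q$ bounds on $f(u)-f(\cal{A}_{1,\mathrm{sup}})$ that carry the extra $\tau^{-\frac12}\log\tau$ decay. On $[1,t/2]$ the moment error is controlled by dominated convergence after rescaling. I would first handle $[1,t/2]$ through the $L^1_1$ bound on $u$ (propagated from $u_0\in L^1_1$), and $[t/2,t]$ through Proposition \ref{pro:P_1st-asymp_super}, which supplies the needed $\log$-room.
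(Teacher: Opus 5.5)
The paper gives no proof of this proposition; it is quoted from \cite{Fukuda-Sato}. Judged on its own, your argument is correct.

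The four-term splitting is an exact identity. Lemma \ref{lem:heat_asymp} (3) handles $e^{t\Delta}u_0-\cal{M}_0(u_0)G_t$. The identity $e^{(t-\tau)\Delta}G_\tau=G_t$, combined with the scaling $\cal{G}_1(t\theta)=t^{-1}\delta_t\cal{G}_1(\theta)$, turns the explicit piece into $t^{-1/2}\delta_t\int_{1/t}^1\cdots\,d\theta$. You also correctly identify why the profile integral converges. The zero mass of $\cal{G}_1(\theta)$ together with $\norm{\abs{x}\cal{G}_1(\theta)}_{L^1}=O(\theta^{-1/2})$ makes $\int_0^1\norm{\nabla e^{(1-\theta)\Delta}\cal{G}_1(\theta)}_{L^q}d\theta$ finite; without the zero mass you would only get a non-integrable $\theta^{-1}$. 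For $h(\tau)\coloneqq f(u(\tau))-f(\cal{A}_{1,\mathrm{sup}}(\tau))$, Proposition \ref{pro:P_1st-asymp_super} and $\frac n2(p-1)=1$ give $\norm{h(\tau)}_{L^q}\le C\tau^{-\frac n2(1-\frac1q)-\frac32}\log\tau$. Hence both the interval $[t/2,t]$ and the tail $\int_{t/2}^\infty$ contribute $O(t^{-\frac n2(1-\frac1q)-1}\log t)$.

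One point should be tidied: on $[1,t/2]$, use dominated convergence only. For fixed $\tau$, the quantity $t^{\frac n2(1-\frac1q)+\frac12}\norm{a\cdot\nabla(e^{(t-\tau)\Delta}h(\tau)-\cal{M}_0(h(\tau))G_t)}_{L^q}$ tends to $0$. This follows from Lemma \ref{lem:heat_asymp} (1) and the $L^q$-continuity of $s\mapsto\nabla G_s$ at $s=1$. For $\tau\le t/2$ it is bounded by $C\norm{h(\tau)}_{L^1}$, which is integrable on $(1,\infty)$. No weighted estimate on $u$ is needed.

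The alternative you mention, using the propagated bound $\norm{\abs{x}u(\tau)}_{L^1}\lesssim(1+\tau)^{1/2}$ with Lemma \ref{lem:heat_asymp} (2), does not close by itself. It gives $\norm{\abs{x}h(\tau)}_{L^1}\lesssim\tau^{-1/2}$. Then $\int_1^{t/2}(t-\tau)^{-\frac n2(1-\frac1q)-1}\tau^{-1/2}d\tau\sim t^{-\frac n2(1-\frac1q)-\frac12}$, which is only $O$, not $o$.

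For comparison, the paper treats the analogous Duhamel term in its proof of Theorem \ref{thm:P_1st-asymp} differently. On $[0,t/2]$ it uses the time-Taylor formula $e^{(t-\tau)\Delta}-e^{t\Delta}=\int_0^1(-\tau\Delta)e^{(t-\tau\theta)\Delta}d\theta$. It then applies Lemma \ref{lem:heat_asymp} (1) once, to the time-integrated function. That route also works here, since $\int_1^{t/2}\tau\norm{h(\tau)}_{L^1}d\tau=O(t^{1/2}\log t)=o(t)$. It gives an explicit rate for the $[1,t/2]$ piece and confines all non-quantitative content to a single application of the lemma to the fixed function $\Psi_1$. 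Your version is per-$\tau$ and qualitative, but it needs nothing beyond $\norm{h}_{L^1}\in L^1(1,\infty)$ and the $L^q$ bound on $[t/2,t]$.
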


The functions $\cal{S}_{3, \mathrm{sub}}$, $\cal{S}_{3, \mathrm{sup}}$, and $\cal{S}_{3, \mathrm{cri}}$ describe the shapes of the new leading terms in the third-order asymptotic profiles.
A sufficient condition for ensuring $\norm{\cal{S}_{3, \mathrm{cri}}}_{L^{q}} >0$ is that $\cal{M}_{0} \left( u_{0} \right) =0$ and $\cal{M}_{e_{j}} \left( u_{0} \right) -a_{j} \cal{M}_{0} \left( \Psi_{1} \right) \neq 0$ for some $j \in \left\{ 1, \ldots, n \right\}$; however, this condition is no longer necessary.
From Propositions \ref{pro:P_2nd-asymp_sub_optimal} and \ref{pro:P_2nd-asymp_cri_optimal}, the optimal convergence rates in the second-order asymptotic expansions given by Propositions \ref{pro:P_2nd-asymp_sub} and \ref{pro:P_2nd-asymp_cri} are completely identified except for the case where $p=1+3/ ( 2n )$.
Although Proposition \ref{pro:P_2nd-asymp_sub_optimal} shows that the exponent $1+3/ ( 2n )$ is critical for the second-order asymptotic expansion, the optimality of the decay rate in \eqref{eq:P_2nd-asymp_cri_imp} is still an open problem.
As a byproduct of our main results in this paper, we provide an answer to this problem.
For further results on higher-order asymptotic expansions, we refer the reader to \cite{Ishige-Kawakami_2013, Kusaba, Yamamoto_arXiv}.

The purpose of this paper is to investigate the case where $\cal{M}_{0} \left( u_{0} \right) =0$.
In this case, the asymptotic profiles $\cal{A}_{1, \mathrm{sub}} \left( t \right)$, $\cal{A}_{1, \mathrm{cri}} \left( t \right)$, $\cal{A}_{1, \mathrm{sup}} \left( t \right)$, $\cal{A}_{2, \mathrm{sub}} \left( t \right)$, and $\cal{A}_{2, \mathrm{cri}} \left( t \right)$ vanish identically.
As a consequence, the above results on the asymptotic behavior of the global solution to \eqref{P} yield improvements of the decay rates in \eqref{eq:P_decay_gene} and \eqref{eq:P_decay_sub}.
Therefore, we are naturally led to identify the optimal decay rate of the global solution under the zero-mass condition $\cal{M}_{0} \left( u_{0} \right) =0$.
This observation may be important for revealing the role of the quadratic nonlinear convection in the incompressible Navier--Stokes equations from the perspective of the convection-diffusion equation.
The Cauchy problem for the incompressible Navier--Stokes equations is given by
\begin{align}
	\label{NS}
	\begin{cases}
		\partial_{t} u- \Delta u+ \left( u \cdot \nabla \right) u+ \nabla p=0, &\qquad \left( t, x \right) \in \left( 0, \infty \right) \times \R^{n}, \\
		\nabla \cdot u=0, &\qquad \left( t, x \right) \in \left( 0, \infty \right) \times \R^{n}, \\
		u \left( 0 \right) =u_{0}, &\qquad x \in \R^{n},
	\end{cases}
\end{align}
where $u \colon \left[ 0, \infty \right) \times \R^{n} \to \R^{n}$ is an unknown vector function, $p \colon \left[ 0, \infty \right) \times \R^{n} \to \R$ is an unknown scalar function, and $u_{0} \colon \R^{n} \to \R^{n}$ is a given datum at $t=0$.
The unknown functions $u$ and $p$ represent the velocity field and the pressure of the fluid, respectively, and the term $\left( u \cdot \nabla \right) u$ describes nonlinear convection.
The divergence-free condition $\nabla \cdot u=0$ characterizes the incompressibility of the fluid and imposes the compatibility condition $\nabla \cdot u_{0} =0$ on the initial datum.
It is known that the condition $\nabla \cdot u_{0} =0$ implies $\cal{M}_{0} \left( u_{0} \right) =0$ if $u_{0} \in L^{1} \left( \R^{n} \right)$ (see \cite[Theorem 2.5]{Miyakawa}).
This fact makes a crucial difference between the incompressible Navier--Stokes equations and semilinear parabolic equations such as the convection-diffusion equation in the asymptotic behavior of global solutions.
Asymptotic expansions of global solutions to \eqref{NS} were established in \cite{Carpio_1996_NS, Fujigaki-Miyakawa, Yamamoto_2024, Yamamoto_2025}.
As an application, the optimal decay rate of the global solutions was identified in \cite{Miyakawa-Schonbek}.
See also \cite{Schonbek, Choe-Jin} for the optimal decay rate.
In this paper, we extend this result to the convection-diffusion equation.

The rest of this paper is organized as follows.
In the next section, we present our main results: Theorems \ref{thm:P_decay_above}, \ref{thm:P_1st-asymp}, and \ref{thm:P_decay_below}.
Section \ref{sec:Heat} is devoted to a basic estimate and asymptotic expansions of the free solution.
In Section \ref{sec:thm_1}, we provide the proof of Theorem \ref{thm:P_decay_above}.
In Section \ref{sec:thm_2-3}, we prove Theorems \ref{thm:P_1st-asymp} and \ref{thm:P_decay_below}.

\section{Main results} \label{sec:main}

The first theorem provides an upper bound of global solutions to \eqref{P} under the zero-mass condition.

\begin{thm} \label{thm:P_decay_above}
	Let $p>1+1/ ( n+1 )$ and let $u_{0} \in ( L^{1}_{1} \cap L^{\infty} ) \left( \R^{n} \right)$ satisfy $\cal{M}_{0} \left( u_{0} \right) =0$.
	Let $u \in X$ be the global solution to \eqref{P} given in Proposition \ref{pro:P_global}.
	Then the following assertions hold:
	\begin{itemize}
		\item[\rm (1)]
			If $p>1+1/n$, then for any $q \in \left[ 1, \infty \right]$, there exists $C_{0} >0$ such that the estimate
			\begin{align}
				\label{eq:P_decay_above_1}
				\norm{u \left( t \right)}_{L^{q}} \leq C_{0} \left( 1+t \right)^{- \frac{n}{2} \left( 1- \frac{1}{q} \right) - \frac{1}{2}}
			\end{align}
			holds for all $t>0$.
		\item[\rm (2)]
			If $p \leq 1+1/n$, then there exist $\varepsilon_{0} >0$ and $C_{1} >0$, independent of $\norm{u_{0}}_{L^{1}_{1} \cap L^{\infty}}$, such that the estimate
			\begin{align}
				\label{eq:P_decay_above_2}
				\norm{u \left( t \right)}_{L^{q}} \leq C_{1} \norm{u_{0}}_{L^{1}_{1} \cap L^{\infty}} \left( 1+t \right)^{- \frac{n}{2} \left( 1- \frac{1}{q} \right) - \frac{1}{2}}
			\end{align}
			holds for all $t>0$ and $q \in \left[ 1, \infty \right]$, provided that $\norm{u_{0}}_{L^{1}_{1} \cap L^{\infty}} < \varepsilon_{0}$.
	\end{itemize}
\end{thm}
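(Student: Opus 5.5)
We work from the integral form \eqref{eq:P_integral}. For the linear part we use the basic heat estimate from Section \ref{sec:Heat}: if $\cal{M}_{0}(u_{0})=0$ and $u_{0}\in L^{1}_{1}$, then
\begin{align*}
	\norm{e^{t\Delta}u_{0}}_{L^{q}} \leq C t^{-\frac{n}{2}(1-\frac{1}{q})-\frac{1}{2}}\norm{u_{0}}_{L^{1}_{1}}.
\end{align*}
This follows by writing $e^{t\Delta}u_{0}(x)=\int (G_{t}(x-y)-G_{t}(x))u_{0}(y)\,dy$ and applying the mean value theorem. For short times we also have $\norm{e^{t\Delta}u_{0}}_{L^{q}}\leq \norm{u_{0}}_{L^{1}\cap L^{\infty}}$. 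For the Duhamel term we use $\norm{\nabla e^{(t-\tau)\Delta}\varphi}_{L^{q}}\leq C(t-\tau)^{-\frac{n}{2}(\frac{1}{r}-\frac{1}{q})-\frac{1}{2}}\norm{\varphi}_{L^{r}}$, with $r=1$ on $\tau\in(t/2,t)$ when needed, and with $r$ chosen so that the kernel singularity is integrable. The key bound is
\begin{align*}
	\norm{f(u(\tau))}_{L^{r}}=\norm{u(\tau)}_{L^{pr}}^{p}.
\end{align*}
The weight $(1+\tau)^{-\frac12}$ gained by $u$ is the only new information. The Duhamel term gives $\nabla$ one extra half power but loses the zero-mean cancellation, since $f(u)$ need not have zero mass. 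Hence the bound $\int_{0}^{t/2}(t-\tau)^{-\frac{n}{2}(1-\frac1q)-\frac12}\norm{f(u(\tau))}_{L^{1}}d\tau$ requires $\norm{f(u)}_{L^{1}}$ to be integrable in time, or at least bounded so that the integral stays at the rate $t^{-\frac n2(1-\frac1q)-\frac12}$.

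For (1), with $p>1+1/n$: Proposition \ref{pro:P_global} gives $\norm{f(u(\tau))}_{L^{1}}\leq C(1+\tau)^{-\frac{n}{2}(p-1)}$, and $\frac n2(p-1)>\frac12$. This is not yet enough in general, so we bootstrap. Assume $\norm{u(t)}_{L^{q}}\leq C(1+t)^{-\frac n2(1-\frac1q)-\beta}$ for some $\beta\in[0,\frac12)$. Then $\norm{f(u)}_{L^{1}}\lesssim(1+\tau)^{-\frac n2(p-1)-p\beta}$. Split the Duhamel integral at $t/2$.
\begin{itemize}
	\item On $(0,t/2)$ we get $t^{-\frac n2(1-\frac1q)-\frac12}\int_{0}^{t/2}(1+\tau)^{-\frac n2(p-1)-p\beta}d\tau$.
	\item On $(t/2,t)$ we use the $L^{\infty}$ decay of $u$ together with $\norm{f(u)}_{L^{q}}\leq\norm{u}_{L^{\infty}}^{p-1}\norm{u}_{L^{q}}$, which gains $t^{\frac12-\frac n2(p-1)-(p-1)\beta}$.
\end{itemize}
The new exponent is $\beta'=\min\{\frac12,\ \frac n2(p-1)+p\beta-\frac12\}$, up to logarithms. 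Since $\frac n2(p-1)-\frac12>0$, $\beta$ increases by at least a fixed amount each step until the first integral converges, which gives the full rate $\frac12$. Each step is a fixed-point/Gronwall-type argument on the quantity $\sup_{t}(1+t)^{\frac n2(1-\frac1q)+\beta}\norm{u(t)}_{L^{q}}$, finishing with finitely many iterations. Small times are handled by the bounds from Proposition \ref{pro:P_global}. One must take care with $q=\infty$ and the logarithms at the borderline steps; we avoid the borderline by choosing a slightly smaller exponent at intermediate steps.

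For (2), with $1+1/(n+1)<p\leq1+1/n$: a bootstrap from $\beta=0$ fails, because $\frac n2(p-1)\leq\frac12$. Instead we run a smallness argument directly in the weighted norm
\begin{align*}
	\norm{u}_{Y}=\sup_{t>0,\ q\in\{1,\infty\}}(1+t)^{\frac n2(1-\frac1q)+\frac12}\norm{u(t)}_{L^{q}},
\end{align*}
with $L^{q}$ for intermediate $q$ obtained by interpolation. We apply the contraction mapping principle, or a continuity argument combined with uniqueness from Proposition \ref{pro:P_global}. The decisive computation is
\begin{align*}
	\norm{f(u(\tau))}_{L^{1}}\leq\norm{u}_{L^\infty}^{p-1}\norm{u}_{L^1}\lesssim\norm{u}_{Y}^{p}(1+\tau)^{-\frac n2(p-1)-\frac p2}.
\end{align*}
This is integrable exactly when $\frac{(n+1)p-n}{2}>1$, i.e. $p>1+1/(n+1)$. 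The $(t/2,t)$ part gives $t^{-\frac n2(1-\frac1q)-\frac12}\cdot t^{\frac12-\frac{(n+1)(p-1)}{2}}$, and the extra factor is again bounded. Thus $\norm{u}_{Y}\leq C\norm{u_{0}}_{L^{1}_{1}\cap L^\infty}+C\norm{u}_{Y}^{p}$, which closes for small data with constants independent of the data.

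The main obstacle is the time-integrability of $\norm{f(u)}_{L^{1}}$ in the $(0,t/2)$ piece, because no mass cancellation is available for $f(u)$. It dictates both the threshold $1+1/(n+1)$ and the iteration scheme in (1).
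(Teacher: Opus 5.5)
Your proposal is correct. For part (2) it is essentially the paper's proof. The paper runs a continuity argument on $Q(t)=\sup_{0<s\le t}(1+s)^{1/2}\bigl(\norm{u(s)}_{L^{1}}+(1+s)^{n/2}\norm{u(s)}_{L^{\infty}}\bigr)$, using the Duhamel bound of Lemma \ref{lem:P_Duhamel_decay}. That bound uses the same split at $t/2$ and the same integrability condition $\frac{n}{2}(p-1)+\frac{p}{2}>1$. One detail to fix: the argument has to be run on this truncated supremum, together with $\limsup_{t\searrow 0}Q(t)\le C_{2}\norm{u_{0}}_{L^{1}_{1}\cap L^{\infty}}$, because your $\norm{u}_{Y}$ over all $t>0$ is not known to be finite in advance.

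For part (1) you take a genuinely different route. The paper does no iteration by hand. It quotes the known higher-order asymptotic expansions: Propositions \ref{pro:P_1st-asymp_super} and \ref{pro:P_2nd-asymp_cri} for $p\ge 1+2/n$, and Lemmas \ref{lem:P_asymp_higher_sub} and \ref{lem:P_asymp_higher_cri} after placing $p$ in an interval $[1+\frac{k+2}{(k+1)n},\,1+\frac{k+1}{kn})$. It then notes that every profile vanishes when $\cal{M}_{0}(u_{0})=0$, reads the decay of $u$ off the remainder bounds for $t>2^{k+1}$, and uses Proposition \ref{pro:P_global} for bounded times.

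Your iteration $\beta\mapsto\min\{\frac12,\frac{n}{2}(p-1)+p\beta-\frac12\}$, started at $\beta=0$, is in effect the zero-mass version of the induction on $k$ hidden inside those lemmas. It has three advantages:
\begin{itemize}
\item It is self-contained: it needs only Proposition \ref{pro:P_global} and Lemmas \ref{lem:heat_decay} and \ref{lem:heat_asymp}.
\item It shows exactly where each hypothesis enters: $p>1+1/n$ makes the increment $\sigma=\frac{n}{2}(p-1)-\frac12$ positive, and $p>1+1/(n+1)$ makes $\int_{0}^{t/2}$ converge at the last step, after lowering $\beta$ slightly to avoid the logarithm.
\item It gains faster: after two steps you get $(1+p)\sigma$, versus the $2\sigma$ that Lemma \ref{lem:P_asymp_higher_sub} gives with $k=1$. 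With $\cal{A}_{1,\mathrm{sup}}\equiv 0$ the nonlinearity is $p$-homogeneous in the remainder, rather than linearised around a nonzero profile.
\end{itemize}
The paper's route buys brevity given the literature; yours buys independence from it.

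Two minor points. Each bootstrap step is just substitution of the known bound for the given solution into \eqref{eq:P_integral}, so no fixed-point or Gronwall argument is needed there. Also, the $L^{1}\to L^{q}$ estimate ($r=1$) is the one used on $(0,t/2)$, not on $(t/2,t)$ as your opening paragraph says.
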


\begin{rem} \label{rem:smallness}
	The assertion (2) in the above theorem is valid even in the case where $p>1+1/n$, which implies that the dependence of the constant $C_{0}$ in \eqref{eq:P_decay_above_1} on the initial datum $u_{0}$ can be made explicit under the smallness assumption.
\end{rem}

We remark that the decay rate in \eqref{eq:P_decay_above_1} and \eqref{eq:P_decay_above_2} is faster than that in \eqref{eq:P_decay_sub} since
\begin{align*}
	\frac{n}{2} \left( 1- \frac{1}{q} \right) + \frac{1}{2} > \frac{n+1}{2p} \left( 1- \frac{1}{q} \right)
\end{align*}
for all $q \in \left[ 1, \infty \right]$.
In \cite{Escobedo-Zuazua}, a similar result was established in the case where $p=1+1/n$.
They showed that if $u_{0} \in ( L^{1} \cap L^{\infty} ) \left( \R^{n} \right)$ satisfies $e^{\abs{x}^{2} /8} u_{0} \in L^{2} \left( \R^{n} \right)$ and $\cal{M}_{0} \left( u_{0} \right) =0$, then for any $\varepsilon >0$ and $q \in \left[ 1, \infty \right]$, there exists $C>0$ such that the estimate
\begin{align*}
	\norm{u \left( t \right)}_{L^{q}} \leq Ct^{- \frac{n}{2} \left( 1- \frac{1}{q} \right) - \frac{1}{2} + \varepsilon}
\end{align*}
holds for all $t>1$.
Hence, Theorem \ref{thm:P_decay_above} improves the decay rate in the above estimate and extends the range of the exponent $p$, although the smallness of the initial datum is assumed when $p \leq 1+1/n$.

The second theorem presents a self-similar asymptotic profile of global solutions to \eqref{P} with the same decay rate as in \eqref{eq:P_decay_above_1} and \eqref{eq:P_decay_above_2}.

\begin{thm} \label{thm:P_1st-asymp}
	Let $p>1+1/ ( n+1 )$ and let $u_{0} \in ( L^{1}_{1} \cap L^{\infty} ) \left( \R^{n} \right)$ satisfy $\cal{M}_{0} \left( u_{0} \right) =0$.
	Let $u \in X$ be a global solution to \eqref{P} satisfying
	\begin{align}
		\label{eq:P_decay_assump}
		\sup_{q \in \left[ 1, \infty \right]} \sup_{t>0} \left( 1+t \right)^{\frac{n}{2} \left( 1- \frac{1}{q} \right) + \frac{1}{2}} \norm{u \left( t \right)}_{L^{q}} < \infty.
	\end{align}
	Then for any $q \in \left[ 1, \infty \right]$, it holds that
	\begin{align}
		\label{eq:P_1st-asymp}
		\lim_{t \to \infty} t^{\frac{n}{2} \left( 1- \frac{1}{q} \right) + \frac{1}{2}} \norm{u \left( t \right) - \scr{A} \left( t \right)}_{L^{q}} =0,
	\end{align}
	where
	\begin{align*}
		\scr{A} \left( t \right) &\coloneqq - \left( \cal{M}_{1} \left( u_{0} \right) -a \cal{M}_{0} \left( \psi_{0} \right) \right) \cdot \nabla G_{t}, \\
		\psi_{0} &\coloneqq \int_{0}^{\infty} f \left( u \left( \tau \right) \right) d \tau.
	\end{align*}
\end{thm}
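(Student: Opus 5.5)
Using the decay assumption \eqref{eq:P_decay_assump}, we first check that $\psi_{0}$ is well defined. Since $f$ is homogeneous of degree $p$, we have $\norm{f(u(\tau))}_{L^{1}} \leq \norm{u(\tau)}_{L^{\infty}}^{p-1} \norm{u(\tau)}_{L^{1}} \leq C(1+\tau)^{-\frac{n}{2}(p-1)-\frac{p}{2}}$. The exponent satisfies $\frac{n}{2}(p-1)+\frac{p}{2} > 1$, which is exactly $p>1+1/(n+1)$. Hence $f(u) \in L^{1}(0,\infty; L^{1})$ and $\psi_{0} \in L^{1}(\R^{n})$. The same computation gives $\norm{f(u(\tau))}_{L^{q}} \leq C(1+\tau)^{-\frac{n}{2}(1-\frac1q)-\gamma}$ with $\gamma := \frac{n}{2}(p-1)+\frac{p}{2}>1$.

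We then split the Duhamel formula \eqref{eq:P_integral} into a linear and a nonlinear part. For the linear part, since $\cal{M}_{0}(u_{0})=0$ and $u_{0}\in L^{1}_{1}$, the first-order expansion of the free solution (Lemma \ref{lem:heat_asymp}) gives
\[
t^{\frac{n}{2}(1-\frac1q)+\frac12}\norm{e^{t\Delta}u_{0}+\cal{M}_{1}(u_{0})\cdot\nabla G_{t}}_{L^{q}} \to 0 .
\]
For the nonlinear part $N(t) := \int_{0}^{t} a\cdot\nabla e^{(t-\tau)\Delta} f(u(\tau))\,d\tau$, we need to show that $t^{\frac{n}{2}(1-\frac1q)+\frac12}\norm{N(t) - a\cdot\nabla G_{t}\,\cal{M}_{0}(\psi_{0})}_{L^{q}}\to 0$. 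We split the time integral at $t/2$.

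\textbf{Late times} $\tau\in(t/2,t)$: using $\norm{\nabla e^{(t-\tau)\Delta}\varphi}_{L^{q}} \leq C(t-\tau)^{-1/2}\norm{\varphi}_{L^{q}}$, this piece is bounded by $C t^{1/2}\, t^{-\frac n2(1-\frac1q)-\gamma}$. This is $o\bigl(t^{-\frac n2(1-\frac1q)-\frac12}\bigr)$ because $\gamma>1$.

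\textbf{Early times} $\tau\in(0,t/2)$: we write
\[
\nabla e^{(t-\tau)\Delta} f(u(\tau)) - \nabla G_{t}\,\cal{M}_{0}(f(u(\tau))) = \bigl(\nabla G_{t-\tau}-\nabla G_{t}\bigr)\ast f(u(\tau)) + \bigl(\nabla G_{t}\ast f(u(\tau)) - \nabla G_{t}\,\cal{M}_{0}(f(u(\tau)))\bigr).
\]
The first difference is bounded by $C\tau\, t^{-\frac n2(1-\frac1q)-\frac32}\norm{f(u(\tau))}_{L^{1}}$. Integrating against $(1+\tau)^{-\gamma}$ over $(0,t/2)$ yields at most $t^{\max(0,\,2-\gamma)}$ up to a logarithm. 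Since $2-\gamma<1$, the result is $o\bigl(t^{-\frac n2(1-\frac1q)-\frac12}\bigr)$.

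The second difference is the main obstacle, because $f(u(\tau))$ is not known to have a first moment. I would handle it with a density/dominated-convergence argument. For each fixed $\tau$, Lemma \ref{lem:heat_asymp} (zeroth-order form for $L^{1}$ data) gives $t^{\frac n2(1-\frac1q)+\frac12}\norm{\nabla G_{t}\ast\varphi-\cal{M}_{0}(\varphi)\nabla G_{t}}_{L^{q}}\to 0$. This quantity is also uniformly bounded by $C\norm{\varphi}_{L^{1}}$ with $\varphi = f(u(\tau))$. Since $\norm{f(u(\tau))}_{L^{1}}$ is integrable in $\tau$, dominated convergence over $\tau\in(0,\infty)$ (with the indicator $\mathbf 1_{\tau<t/2}$) gives the $o(\cdot)$ bound.

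Finally, the tail $\nabla G_{t}\int_{t/2}^{\infty}\cal{M}_{0}(f(u))\,d\tau$ is $O\bigl(t^{-\frac n2(1-\frac1q)-\frac12}\, t^{1-\gamma}\bigr)$, which is also $o\bigl(t^{-\frac n2(1-\frac1q)-\frac12}\bigr)$. Combining the pieces gives \eqref{eq:P_1st-asymp} with the stated profile $\scr{A}$.
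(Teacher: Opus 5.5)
Your proof is correct and follows essentially the paper's argument: the same split of the Duhamel term at $t/2$, the mean-value bound $C\tau\, t^{-\frac{n}{2}(1-\frac{1}{q})-\frac{3}{2}}$ for early times, the $(t-\tau)^{-1/2}$ smoothing bound for late times, and the tail $\int_{t/2}^{\infty}$, all closing because $\frac{n}{2}(p-1)+\frac{p}{2}>1$. The one difference is that the paper compares the Duhamel term with $a\cdot\nabla e^{t\Delta}\psi_{0}$ rather than slice by slice with $\cal{M}_{0}(f(u(\tau)))\,a\cdot\nabla G_{t}$, and then applies Lemma~\ref{lem:heat_asymp}~(1) once to the single $L^{1}$ function $\psi_{0}$; so your ``main obstacle'' never arises, and the dominated-convergence step, while valid, is unnecessary.
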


The asymptotic profile $\scr{A} \left( t \right)$ has the self-similarity of the form
\begin{align*}
	\scr{A} \left( t \right) =t^{- \frac{1}{2}} \delta_{t} \scr{A} \left( 1 \right),
\end{align*}
which leads to the uniqueness of the asymptotic profile.
In fact, if $\scr{B} \left( t \right)$ satisfies
\begin{align*}
	\lim_{t \to \infty} t^{\frac{n}{2} \left( 1- \frac{1}{q} \right) + \frac{1}{2}} \norm{u \left( t \right) - \scr{B} \left( t \right)}_{L^{q}} =0
\end{align*}
for some $q \in \left[ 1, \infty \right]$ and
\begin{align*}
	\scr{B} \left( t \right) =t^{- \frac{1}{2}} \delta_{t} \scr{B} \left( 1 \right)
\end{align*}
for all $t>0$, then we have
\begin{align*}
	&\limsup_{t \to \infty} t^{\frac{n}{2} \left( 1- \frac{1}{q} \right) + \frac{1}{2}} \norm{\scr{A} \left( t \right) - \scr{B} \left( t \right)}_{L^{q}} \\
	&\hspace{1cm} \leq \limsup_{t \to \infty} t^{\frac{n}{2} \left( 1- \frac{1}{q} \right) + \frac{1}{2}} \norm{u \left( t \right) - \scr{A} \left( t \right)}_{L^{q}} + \limsup_{t \to \infty} t^{\frac{n}{2} \left( 1- \frac{1}{q} \right) + \frac{1}{2}} \norm{u \left( t \right) - \scr{B} \left( t \right)}_{L^{q}} \\
	&\hspace{1cm} =0.
\end{align*}
In addition, we obtain
\begin{align*}
	t^{\frac{n}{2} \left( 1- \frac{1}{q} \right) + \frac{1}{2}} \norm{\scr{A} \left( t \right) - \scr{B} \left( t \right)}_{L^{q}} = \norm{\scr{A} \left( 1 \right) - \scr{B} \left( 1 \right)}_{L^{q}}
\end{align*}
for any $t>0$.
Therefore, we conclude that $\norm{\scr{A} \left( 1 \right) - \scr{B} \left( 1 \right)}_{L^{q}} =0$, which implies $\scr{A} \left( t \right) = \scr{B} \left( t \right)$.

By taking into account the fact that $1+1/ ( n+1 ) <1+1/n$, Theorem \ref{thm:P_1st-asymp} implies that the linear diffusion dominates the nonlinear convection under the zero-mass condition even in the case where $p \leq 1+1/n$.
We note that the asymptotic profile $\scr{A} \left( t \right)$ is equal to the second-order asymptotic profile $\cal{A}_{2, \mathrm{sup}} \left( t \right)$ with $\cal{M}_{0} \left( u_{0} \right) =0$.

As an application of Theorem \ref{thm:P_1st-asymp}, we show that the decay rate in \eqref{eq:P_decay_above_1} and \eqref{eq:P_decay_above_2} is optimal and provide necessary and sufficient conditions for attaining the optimal decay rate.

\begin{thm} \label{thm:P_decay_below}
	Under the same assumption as in Theorem \ref{thm:P_1st-asymp}, it holds that
	\begin{align}
		\label{eq:P_decay_below}
		\lim_{t \to \infty} t^{\frac{n}{2} \left( 1- \frac{1}{q} \right) + \frac{1}{2}} \norm{u \left( t \right)}_{L^{q}} = \norm{\scr{A} \left( 1 \right)}_{L^{q}}
	\end{align}
	for any $q \in \left[ 1, \infty \right]$.
	Moreover, $\norm{\scr{A} \left( 1 \right)}_{L^{q}} >0$ if and only if there exists $j \in \left\{ 1, \ldots, n \right\}$ such that
	\begin{align}
		\label{eq:P_decay_moment}
		\cal{M}_{e_{j}} \left( u_{0} \right) -a_{j} \cal{M}_{0} \left( \psi_{0} \right) \neq 0.
	\end{align}
\end{thm}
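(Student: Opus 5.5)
The plan is to deduce \eqref{eq:P_decay_below} from Theorem \ref{thm:P_1st-asymp} through the self-similarity of $\scr{A}$, and then to obtain the positivity criterion from a direct computation with $\nabla G_{1}$.

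\emph{The limit.} For the limit I would use the triangle inequality in the form
\begin{align*}
	\abs{\norm{u \left( t \right)}_{L^{q}} - \norm{\scr{A} \left( t \right)}_{L^{q}}} \leq \norm{u \left( t \right) - \scr{A} \left( t \right)}_{L^{q}}.
\end{align*}
Set $b \coloneqq \cal{M}_{1} \left( u_{0} \right) -a \cal{M}_{0} \left( \psi_{0} \right) \in \R^{n}$, so that $\scr{A} \left( t \right) =-b \cdot \nabla G_{t}$. Since $G_{t} = \delta_{t} G_{1}$, differentiating gives $\nabla G_{t} =t^{- \frac{1}{2}} \delta_{t} \nabla G_{1}$. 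Hence $\scr{A} \left( t \right) =t^{- \frac{1}{2}} \delta_{t} \scr{A} \left( 1 \right)$. Because $\norm{\delta_{t} \varphi}_{L^{q}} =t^{- \frac{n}{2} \left( 1- \frac{1}{q} \right)} \norm{\varphi}_{L^{q}}$, this yields
\begin{align*}
	t^{\frac{n}{2} \left( 1- \frac{1}{q} \right) + \frac{1}{2}} \norm{\scr{A} \left( t \right)}_{L^{q}} = \norm{\scr{A} \left( 1 \right)}_{L^{q}}
\end{align*}
for every $t>0$. Multiplying the triangle inequality by $t^{\frac{n}{2} \left( 1- \frac{1}{q} \right) + \frac{1}{2}}$ and applying \eqref{eq:P_1st-asymp}, the right-hand side tends to zero. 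This proves \eqref{eq:P_decay_below}.

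One point needs checking: $\psi_{0}$ must be well defined, with $\cal{M}_{0} \left( \psi_{0} \right)$ finite. This amounts to $\int_{0}^{\infty} \norm{f \left( u \left( \tau \right) \right)}_{L^{1}} d\tau < \infty$. By \eqref{eq:P_decay_assump} and $\abs{f \left( u \right)} = \abs{u}^{p}$, the integrand is bounded by $C \left( 1+ \tau \right)^{- \frac{n}{2} \left( p-1 \right) - \frac{p}{2}}$. The exponent exceeds $1$ exactly when $p \left( n+1 \right) >n+2$, i.e. $p>1+1/ \left( n+1 \right)$, which is the standing hypothesis. This is in any case implicit in the statement of Theorem \ref{thm:P_1st-asymp}.

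\emph{The characterization.} We have $\scr{A} \left( 1 \right) \left( x \right) = \frac{b \cdot x}{2} G_{1} \left( x \right)$, since $\nabla G_{1} \left( x \right) =- \frac{x}{2} G_{1} \left( x \right)$. If $b=0$, then $\scr{A} \left( 1 \right) \equiv 0$. If $b \neq 0$, the function $\frac{b \cdot x}{2} G_{1} \left( x \right)$ is continuous and nonzero off the hyperplane $\left\{ b \cdot x=0 \right\}$, a set of positive measure, so its $L^{q}$ norm is positive for every $q \in \left[ 1, \infty \right]$. Therefore $\norm{\scr{A} \left( 1 \right)}_{L^{q}} >0$ if and only if $b \neq 0$, which is exactly \eqref{eq:P_decay_moment} for some $j$.

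There is no real obstacle here. The only delicate points are the integrability of $f \left( u \right)$ in time, which ensures that $\psi_{0}$ makes sense, and using the exact scaling identity for $\nabla G_{t}$ rather than an inequality.
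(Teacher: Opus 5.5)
Your proposal is correct and follows the paper's own proof: the limit comes from Theorem \ref{thm:P_1st-asymp} combined with the exact scaling identity $t^{\frac{n}{2}(1-\frac{1}{q})+\frac{1}{2}}\norm{\scr{A}(t)}_{L^{q}}=\norm{\scr{A}(1)}_{L^{q}}$, and the characterization comes from the explicit form $\scr{A}(1)=\frac{1}{2}(b\cdot x)G_{1}$. The only cosmetic difference is in that last step: the paper invokes the orthogonality relation $\frac{1}{2}\int_{\R^{n}}y_{j}y_{k}G_{1}(y)\,dy=1$ for $j=k$ and $0$ otherwise, whereas you use that the function is nonzero off the null set $\{b\cdot x=0\}$ (it is this hyperplane's complement, not the hyperplane, that has positive measure); both give the conclusion immediately.
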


Under the same assumption as in Theorem \ref{thm:P_1st-asymp}, the condition \eqref{eq:P_decay_moment} is fulfilled if the initial datum $u_{0}$ satisfies $\cal{M}_{e_{j}} \left( u_{0} \right) \neq 0$ and its amplitude is sufficiently small.
For details, see Proposition \ref{pro:appendix} in Appendix \ref{sec:app_moment}.
Theorem \ref{thm:P_decay_below} also shows that the decay rate in \eqref{eq:P_2nd-asymp_cri_imp} is optimal since $\cal{A}_{2, \mathrm{sub}} \left( t \right) \equiv 0$ when $\cal{M}_{0} \left( u_{0} \right) =0$.

In \cite{Karch-Schonbek}, the authors considered the same problem as ours under different assumptions.
They supposed that for a fixed parameter $\beta \in \left( 0, 1 \right)$, there exists $C>0$ such that the estimate
\begin{align*}
	\norm{e^{t \Delta} u_{0}}_{L^{1}} \leq Ct^{- \frac{\beta}{2}}
\end{align*}
holds for all $t>0$, instead of assuming $\abs{x} u_{0} \in L^{1} \left( \R^{n} \right)$.
This condition is fulfilled if $u_{0} \in L^{1} \left( \R^{n} \right)$ satisfies $\abs{x}^{\beta} u_{0} \in L^{1} \left( \R^{n} \right)$ and $\cal{M}_{0} \left( u_{0} \right) =0$.
They obtained results corresponding to Theorems \ref{thm:P_decay_above} and \ref{thm:P_1st-asymp} in the case where $p>1+1/ ( n+ \beta )$.
Moreover, they showed that the exponent $1+1/ ( n+ \beta )$ is critical for the asymptotic behavior of global solutions to \eqref{P}.
From this point of view, our results resolve the case of $\beta =1$, which was mentioned as an open problem in \cite[Remark 3.6]{Karch-Schonbek}.
As in the case where $\beta \in \left( 0, 1 \right)$, the exponent $1+1/ ( n+1 )$ is expected to be critical for the asymptotic behavior; however, this problem remains open.
Concerning the case of $\beta =1$, we refer the reader to \cite{Benachour-Karch-Laurencot_2004_CD}, in which a partial result for the one-dimensional case was obtained through the asymptotic analysis of global solutions to the viscous Hamilton--Jacobi equation developed in \cite{Benachour-Karch-Laurencot_2004_HJ}.

\begin{rem}
	It seems difficult to improve the decay rate in \eqref{eq:P_decay_above_1} and \eqref{eq:P_decay_above_2} even under additional assumptions on the moments of the initial datum $u_{0}$.
	As a next step of our results, we may consider the case where the initial datum $u_{0} \in ( L^{1}_{1} \cap L^{\infty} ) \left( \R^{n} \right)$ satisfies not only $\cal{M}_{0} \left( u_{0} \right) =0$ but also $\cal{M}_{e_{j}} \left( u_{0} \right) =0$ for all $j \in \left\{ 1, \ldots, n \right\}$.
	In this case, the free solution satisfies
	\begin{align*}
		\lim_{t \to \infty} t^{\frac{n}{2} \left( 1- \frac{1}{q} \right) + \frac{1}{2}} \norm{e^{t \Delta} u_{0}}_{L^{q}} =0
	\end{align*}
	for any $q \in \left[ 1, \infty \right]$ (see Lemma \ref{lem:heat_asymp} (3) in Section \ref{sec:Heat}).
	Hence, we expect that a similar improvement holds for \eqref{P}.
	Indeed, we see from Theorem \ref{thm:P_decay_below} that the decay rate in \eqref{eq:P_decay_above_1} and \eqref{eq:P_decay_above_2} can be improved, provided that $\cal{M}_{0} \left( \psi_{0} \right) =0$.
	We now assume that the composite function $f \circ u$ is nonnegative.
	This assumption is fulfilled if the homogeneous function $f$ is nonnegative.
	For example, $f$ may be given by $f \left( \xi \right) = \abs{\xi}^{p}$ or by $f \left( \xi \right) = \xi^{p}$ with an even integer $p$.
	Under this assumption, the condition $\cal{M}_{0} \left( \psi_{0} \right) =0$ implies $u \equiv 0$, which makes no meaningful result.
	Although there is room for investigating the case where $f \circ u$ changes sign, the main issue is to characterize the condition $\cal{M}_{0} \left( \psi_{0} \right) =0$ in terms of the initial datum $u_{0}$.
\end{rem}

\begin{rem}
	Our main results may be extended to convection-diffusion equations with dispersion or variable diffusion.
	For the asymptotic behavior of global solutions to these equations, we refer the reader to \cite{Karch, Duro-Carpio, Fukuda, Fukuda-Irino} and the references therein.
\end{rem}

\section{Preliminaries} \label{sec:Heat}

We first recall the multi-index notation.
Let $\Z_{>0}$ denote the set of all positive integers and let $\Z_{\geq 0} \coloneqq \Z_{>0} \cup \left\{ 0 \right\}$.
For $\alpha = \left( \alpha_{1}, \ldots, \alpha_{n} \right) \in \Z_{\geq 0}^{n}$, we define
\begin{align*}
	\abs{\alpha} \coloneqq \sum_{j=1}^{n} \alpha_{j},
	\qquad
	\partial^{\alpha} = \partial_{x}^{\alpha} \coloneqq \prod_{j=1}^{n} \partial_{j}^{\alpha_{j}},
\end{align*}
where $\partial_{j} \coloneqq \partial / \partial x_{j}$.

We next summarize a basic estimate and asymptotic expansions of the free solution.

\begin{lem}[\cite{Giga-Giga-Saal}] \label{lem:heat_decay}
	Let $1 \leq q \leq r \leq \infty$ and let $\alpha \in \Z_{\geq 0}^{n}$.
	Then there exists $C>0$ such that the estimate
	\begin{align*}
		\norm{\partial^{\alpha} e^{t \Delta} \varphi}_{L^{r}} \leq Ct^{- \frac{n}{2} \left( \frac{1}{q} - \frac{1}{r} \right) - \frac{\abs{\alpha}}{2}} \norm{\varphi}_{L^{q}}
	\end{align*}
	holds for all $\varphi \in L^{q} \left( \R^{n} \right)$ and $t>0$.
\end{lem}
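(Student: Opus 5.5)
The estimate is the standard $L^q$–$L^r$ smoothing bound for derivatives of the heat semigroup. We will prove it by writing $\partial^{\alpha} e^{t\Delta}\varphi = (\partial^{\alpha} G_t) \ast \varphi$ for $t>0$ and applying Young's convolution inequality. Define $s \in [1,\infty]$ by $1 + 1/r = 1/s + 1/q$. This is admissible since $q \le r$ gives $1/s = 1 - (1/q - 1/r) \in [0,1]$; we need $1/s > 0$ so that $s<\infty$ or $s$ is finite, and indeed $1/s \ge 1 - 1 = 0$. If $1/s = 0$, i.e. $q=1$ and $r=\infty$, then $s=\infty$, and Young still applies in the form $\|g\ast\varphi\|_{L^\infty}\le\|g\|_{L^\infty}\|\varphi\|_{L^1}$. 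Young's inequality then gives
\begin{align*}
	\norm{\partial^{\alpha} e^{t\Delta}\varphi}_{L^{r}} \leq \norm{\partial^{\alpha} G_{t}}_{L^{s}} \norm{\varphi}_{L^{q}}.
\end{align*}

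It remains to compute $\norm{\partial^{\alpha} G_t}_{L^s}$ by scaling. From the explicit formula, $G_t(x) = t^{-n/2} G_1(t^{-1/2}x)$. Hence
\begin{align*}
	\partial^{\alpha} G_t(x) = t^{-\frac{n}{2}-\frac{\abs{\alpha}}{2}} (\partial^{\alpha} G_1)(t^{-1/2}x).
\end{align*}
Changing variables $x = t^{1/2}y$ yields
\begin{align*}
	\norm{\partial^{\alpha} G_t}_{L^s} = t^{-\frac{n}{2}(1-\frac1s)-\frac{\abs{\alpha}}{2}} \norm{\partial^{\alpha}G_1}_{L^s},
\end{align*}
with the convention $1/s=0$ when $s=\infty$. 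Since $1 - 1/s = 1/q - 1/r$, the power of $t$ is exactly $-\frac{n}{2}(\frac1q-\frac1r)-\frac{\abs{\alpha}}{2}$. We therefore take $C = \norm{\partial^{\alpha} G_1}_{L^s}$.

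The only point needing verification is that $C<\infty$. This follows because $\partial^{\alpha}G_1(y) = P_\alpha(y)e^{-\abs{y}^2/4}$ with $P_\alpha$ a polynomial (a Hermite-type polynomial), which can be seen by induction on $\abs{\alpha}$. Such a function lies in every $L^s$, $1\le s\le\infty$.

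No step is a genuine obstacle. The only care needed is the endpoint bookkeeping: the case $s=\infty$ is handled by the $L^1$–$L^\infty$ form of Young noted above, and the case $q=r$ (so $s=1$) reduces to the fact that $\partial^\alpha G_1\in L^1$. The case $t=0$ is excluded by hypothesis, since only $t>0$ is considered.
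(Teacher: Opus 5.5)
Your proof is correct. The paper gives no proof of this lemma and only cites Giga--Giga--Saal, where it is obtained exactly as you do: Young's inequality for $(\partial^{\alpha} G_{t}) \ast \varphi$ with $1 + 1/r = 1/s + 1/q$, combined with the scaling identity $\|\partial^{\alpha} G_{t}\|_{L^{s}} = t^{-\frac{n}{2}(1-\frac{1}{s}) - \frac{|\alpha|}{2}} \|\partial^{\alpha} G_{1}\|_{L^{s}}$. Two small points: your remark that $1/s > 0$ is needed is unnecessary, since Young's inequality holds for every $s \in [1,\infty]$; and the identity $\partial^{\alpha}(G_{t} \ast \varphi) = (\partial^{\alpha} G_{t}) \ast \varphi$ for $\varphi \in L^{q}$ deserves a one-line justification by differentiating under the integral with dominated convergence.
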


\begin{lem}[\cite{Duoandikoetxea-Zuazua}] \label{lem:heat_asymp}
	The following assertions hold:
	\begin{itemize}
		\item[\rm (1)]
			If $\varphi \in L^{1} \left( \R^{n} \right)$, then for any $q \in \left[ 1, \infty \right]$ and $\alpha \in \Z_{\geq 0}^{n}$, it holds that
			\begin{align*}
				\lim_{t \to \infty} t^{\frac{n}{2} \left( 1- \frac{1}{q} \right) + \frac{\abs{\alpha}}{2}} \norm{\partial^{\alpha} \left( e^{t \Delta} \varphi - \cal{M}_{0} \left( \varphi \right) G_{t} \right)}_{L^{q}} =0.
			\end{align*}
		\item[\rm (2)]
			If $\varphi \in L^{1}_{1} \left( \R^{n} \right)$, then for any $q \in \left[ 1, \infty \right]$ and $\alpha \in \Z_{\geq 0}^{n}$, there exists $C>0$ such that the estimate
			\begin{align*}
				\norm{\partial^{\alpha} \left( e^{t \Delta} \varphi - \cal{M}_{0} \left( \varphi \right) G_{t} \right)}_{L^{q}} \leq Ct^{- \frac{n}{2} \left( 1- \frac{1}{q} \right) - \frac{\abs{\alpha} +1}{2}} \norm{\abs{x} \varphi}_{L^{1}}
			\end{align*}
			holds for all $t>0$.
		\item[\rm (3)]
			If $\varphi \in L^{1}_{1} \left( \R^{n} \right)$, then for any $q \in \left[ 1, \infty \right]$ and $\alpha \in \Z_{\geq 0}^{n}$, it holds that
			\begin{align*}
				\lim_{t \to \infty} t^{\frac{n}{2} \left( 1- \frac{1}{q} \right) + \frac{\abs{\alpha} +1}{2}} \norm{\partial^{\alpha} \left( e^{t \Delta} \varphi - \cal{M}_{0} \left( \varphi \right) G_{t} + \cal{M}_{1} \left( \varphi \right) \cdot \nabla G_{t} \right)}_{L^{q}} =0.
			\end{align*}
	\end{itemize}
\end{lem}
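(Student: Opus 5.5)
Since all three assertions concern the linear heat flow, I would prove them by Taylor expanding the heat kernel in the integration variable and then using a density argument. Write
\begin{align*}
	\left( e^{t \Delta} \varphi - \cal{M}_{0} \left( \varphi \right) G_{t} \right) \left( x \right) = \int_{\R^{n}} \left( G_{t} \left( x-y \right) - G_{t} \left( x \right) \right) \varphi \left( y \right) dy.
\end{align*}
The basic scaling fact used throughout is
\begin{align*}
	\norm{\partial^{\alpha} \partial^{\beta} G_{t}}_{L^{q}} = t^{- \frac{n}{2} \left( 1- \frac{1}{q} \right) - \frac{\abs{\alpha} + \abs{\beta}}{2}} \norm{\partial^{\alpha} \partial^{\beta} G_{1}}_{L^{q}},
\end{align*}
which follows from $G_{t} = \delta_{t} G_{1}$. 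Translation invariance of the $L^{q}$-norm also lets us ignore shifts of the kernel.

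For (2), I use the first-order Taylor formula
\begin{align*}
	G_{t} \left( x-y \right) - G_{t} \left( x \right) = - \int_{0}^{1} y \cdot \nabla G_{t} \left( x- \theta y \right) d \theta.
\end{align*}
Differentiating under the integral by $\partial^{\alpha}$ and applying Minkowski's integral inequality gives
\begin{align*}
	\norm{\partial^{\alpha} \left( e^{t \Delta} \varphi - \cal{M}_{0} \left( \varphi \right) G_{t} \right)}_{L^{q}} \leq \int_{\R^{n}} \abs{y} \abs{\varphi \left( y \right)} dy \, \sum_{j} \norm{\partial^{\alpha} \partial_{j} G_{t}}_{L^{q}}.
\end{align*}
By the scaling identity with $\abs{\beta} =1$, this is exactly the bound in (2).

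For (1), I argue by density. Given $\varepsilon >0$, choose $\psi \in C_{c} \left( \R^{n} \right)$ with $\norm{\varphi - \psi}_{L^{1}} < \varepsilon$. The difference $\varphi - \psi$ contributes at most
\begin{align*}
	\norm{\partial^{\alpha} e^{t \Delta} \left( \varphi - \psi \right)}_{L^{q}} + \abs{\cal{M}_{0} \left( \varphi - \psi \right)} \norm{\partial^{\alpha} G_{t}}_{L^{q}} \leq C t^{- \frac{n}{2} \left( 1- \frac{1}{q} \right) - \frac{\abs{\alpha}}{2}} \varepsilon,
\end{align*}
using Lemma \ref{lem:heat_decay} with $q=1$ and the scaling identity. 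Since $\psi \in L^{1}_{1} \left( \R^{n} \right)$, part (2) gives $\psi$ an extra factor $t^{- \frac{1}{2}}$, so its contribution vanishes after multiplying by the weight $t^{\frac{n}{2} \left( 1- \frac{1}{q} \right) + \frac{\abs{\alpha}}{2}}$. Hence the $\limsup$ is at most $C \varepsilon$, and letting $\varepsilon \to 0$ gives (1).

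For (3), I use the same two-step scheme one order higher. Define
\begin{align*}
	R_{t} \varphi \coloneqq e^{t \Delta} \varphi - \cal{M}_{0} \left( \varphi \right) G_{t} + \cal{M}_{1} \left( \varphi \right) \cdot \nabla G_{t}.
\end{align*}
\begin{itemize}
	\item[(i)] For $\psi \in C_{c} \left( \R^{n} \right)$, the second-order Taylor formula
	\begin{align*}
		G_{t} \left( x-y \right) - G_{t} \left( x \right) + y \cdot \nabla G_{t} \left( x \right) = \int_{0}^{1} \left( 1- \theta \right) \left( y \cdot \nabla \right)^{2} G_{t} \left( x- \theta y \right) d \theta,
	\end{align*}
	together with Minkowski's inequality and scaling with $\abs{\beta} =2$, gives
	\begin{align*}
		\norm{\partial^{\alpha} R_{t} \psi}_{L^{q}} \leq C t^{- \frac{n}{2} \left( 1- \frac{1}{q} \right) - \frac{\abs{\alpha} +2}{2}} \norm{\abs{x}^{2} \psi}_{L^{1}}.
	\end{align*}
	This is $o \left( t^{- \frac{n}{2} \left( 1- \frac{1}{q} \right) - \frac{\abs{\alpha} +1}{2}} \right)$.
	\item[(ii)] For general $\varphi \in L^{1}_{1} \left( \R^{n} \right)$, the operator $R_{t}$ is linear and satisfies
	\begin{align*}
		\norm{\partial^{\alpha} R_{t} \varphi}_{L^{q}} \leq C t^{- \frac{n}{2} \left( 1- \frac{1}{q} \right) - \frac{\abs{\alpha} +1}{2}} \norm{\abs{x} \varphi}_{L^{1}}.
	\end{align*}
	This follows from (2) for the first two terms and from $\abs{\cal{M}_{1} \left( \varphi \right)} \leq \norm{\abs{x} \varphi}_{L^{1}}$ together with the scaling of $\partial^{\alpha} \nabla G_{t}$ for the last term.
	\item[(iii)] Approximate $\varphi$ by $\psi \in C_{c} \left( \R^{n} \right)$ in the $L^{1}_{1}$-norm. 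Applying (ii) to $\varphi - \psi$ and (i) to $\psi$ gives (3) by the same $\varepsilon$-argument as in (1).
\end{itemize}

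The only delicate point is this density step in (3). We need the uniform first-order bound (ii) on $R_{t}$ in terms of $\norm{\abs{x} \varphi}_{L^{1}}$, and we need compactly supported functions to be dense in $L^{1}_{1} \left( \R^{n} \right)$. The latter holds by truncating $\varphi$ and then mollifying.
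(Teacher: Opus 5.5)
The paper gives no proof of this lemma; it only cites Duoandikoetxea--Zuazua. Your proof is correct and complete, but it takes a different route from the one in that reference.

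\textbf{Your route.} You Taylor-expand the kernel in $y$, estimate with Minkowski's inequality and the scaling $G_{t} = \delta_{t} G_{1}$, and then run a density argument in $L^{1}$ for (1) and in $L^{1}_{1}$ for (3).

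\textbf{The route in the cited reference.} It decomposes the datum rather than the kernel. For $\varphi \in L^{1}_{1}(\R^{n})$ one writes $\varphi = \cal{M}_{0}(\varphi)\delta + \nabla \cdot F$, where $F \in L^{1}(\R^{n};\R^{n})$ and $\norm{F}_{L^{1}} \leq C \norm{\abs{x}\varphi}_{L^{1}}$. Testing against $x_{j}$ gives $\cal{M}_{0}(F_{j}) = -\cal{M}_{e_{j}}(\varphi)$.
\begin{itemize}
\item[(2)] Since $e^{t\Delta}\varphi - \cal{M}_{0}(\varphi)G_{t} = \nabla \cdot e^{t\Delta}F$, the bound follows at once from Lemma \ref{lem:heat_decay}.
\item[(3)] The identity $\partial^{\alpha}\bigl(e^{t\Delta}\varphi - \cal{M}_{0}(\varphi)G_{t} + \cal{M}_{1}(\varphi)\cdot\nabla G_{t}\bigr) = \sum_{j} \partial^{\alpha}\partial_{j}\bigl(e^{t\Delta}F_{j} - \cal{M}_{0}(F_{j})G_{t}\bigr)$ holds. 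Applying (1) to each component $F_{j}$ then gives the claim.
\end{itemize}

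\textbf{What each buys.} In the decomposition approach, the only density argument is the one inside (1), and no second-order Taylor expansion is needed. It also iterates directly to higher-order expansions whose coefficients are the higher moments. Your approach is fully elementary and self-contained, with no decomposition lemma needed.

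\textbf{A small simplification.} In step (iii) you do not need to mollify. The truncation $\varphi \mathbf{1}_{\{\abs{x}<R\}}$ already has finite second moment, and it converges to $\varphi$ in $L^{1}_{1}$ by dominated convergence. That is all step (i) requires, and the same truncation also works for the density step in (1).
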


\section{Proof of Theorem \ref{thm:P_decay_above}} \label{sec:thm_1}

We first give the proof of Theorem \ref{thm:P_decay_above} (1).
In this proof, the following lemmas on the higher-order asymptotic expansions play an important role.

\begin{lem}[{\cite{Kusaba}}] \label{lem:P_asymp_higher_sub}
	Let $k \in \Z_{>0}$ and let $1+1/n<p<1+ ( k+1 ) / ( kn )$.
	Let $u_{0} \in ( L^{1}_{1} \cap L^{\infty} ) \left( \R^{n} \right)$ and let $u \in X$ be the global solution to \eqref{P} given in Proposition \ref{pro:P_global}.
	Then for any $q \in \left[ 1, \infty \right]$, there exists $C>0$ such that the estimates
	\begin{align*}
		\norm{u \left( t \right) - \cal{A}_{k+1, \mathrm{sub}} \left( t \right)}_{L^{q}} \leq Ct^{- \frac{n}{2} \left( 1- \frac{1}{q} \right)} \times \begin{dcases}
			t^{- \left( k+1 \right) \sigma} &\qquad \text{if} \quad p<1+ \frac{k+2}{\left( k+1 \right) n}, \\
			t^{- \frac{1}{2}} \log t &\qquad \text{if} \quad p=1+ \frac{k+2}{\left( k+1 \right) n}, \\
			t^{- \frac{1}{2}} &\qquad \text{if} \quad p>1+ \frac{k+2}{\left( k+1 \right) n}
		\end{dcases}
	\end{align*}
	hold for all $t>2^{k}$, where
	\begin{align*}
		\cal{A}_{k+1, \mathrm{sub}} \left( t \right)
		&\coloneqq \cal{A}_{1, \mathrm{sup}} \left( t \right) + \int_{0}^{1} a \cdot \nabla e^{\left( t- \tau \right) \Delta} f \left( \cal{A}_{1, \mathrm{sup}} \left( \tau \right) \right) d \tau \\
		&\hspace{1cm} + \begin{dcases}
			\int_{1}^{t} a \cdot \nabla e^{\left( t- \tau \right) \Delta} f \left( \cal{A}_{1, \mathrm{sup}} \left( \tau \right) \right) d \tau, &\qquad k=1, \\
			\int_{1}^{t} a \cdot \nabla e^{\left( t- \tau \right) \Delta} f \left( \cal{A}_{k, \mathrm{sub}} \left( \tau \right) \right) d \tau, &\qquad k \geq 2.
		\end{dcases}
	\end{align*}
\end{lem}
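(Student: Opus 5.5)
This is an induction on $k$, following the pattern already visible in Propositions \ref{pro:P_1st-asymp_super}, \ref{pro:P_2nd-asymp_sub}, and \ref{pro:P_2nd-asymp_sub_optimal}.

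\textbf{Base case.} For $k=1$ the statement is Proposition \ref{pro:P_2nd-asymp_sub}, since $\cal{A}_{2, \mathrm{sub}}$ as defined in the lemma coincides with the earlier profile (split the integral at $\tau=1$). Note that $p<1+(k+1)/(kn)$ means $k\sigma<1/2$. Under this condition the induction hypothesis gives
\begin{align*}
	\norm{u \left( t \right) - \cal{A}_{k, \mathrm{sub}} \left( t \right)}_{L^{q}} \leq Ct^{- \frac{n}{2} \left( 1- \frac{1}{q} \right) -k \sigma}
\end{align*}
for $t>2^{k-1}$.

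\textbf{Self-similar structure of the profile.} I would also record that $\cal{A}_{k, \mathrm{sub}}(t)-\cal{A}_{1,\mathrm{sup}}(t)$ is a finite sum of terms of size $t^{-\frac{n}{2}(1-\frac1q)-j\sigma}$ with $1\le j\le k-1$, plus the contribution of the $\int_0^1$ piece. That piece is $O(t^{-\frac n2(1-\frac1q)-\frac12})$ by Lemma \ref{lem:heat_decay}, because $\int_0^1 f(\cal{A}_{1,\mathrm{sup}})\,d\tau$ is finite when $n(p-1)<1+1/k\le 2$. Consequently
\[
\norm{\cal{A}_{k,\mathrm{sub}}(t)}_{L^q}\le Ct^{-\frac n2(1-\frac1q)},
\]
uniformly, for $t\ge1$.

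\textbf{Decomposition.} Writing \eqref{eq:P_integral} from time $0$ and subtracting the definition gives, for $t>2^k$,
\begin{align*}
	u(t)-\cal{A}_{k+1,\mathrm{sub}}(t) &= \bigl(e^{t\Delta}u_0-\cal{A}_{1,\mathrm{sup}}(t)\bigr) + \int_0^1 a\cdot\nabla e^{(t-\tau)\Delta}\bigl(f(u)-f(\cal{A}_{1,\mathrm{sup}})\bigr)d\tau \\
	&\hspace{1cm} + \int_1^t a\cdot\nabla e^{(t-\tau)\Delta}\bigl(f(u(\tau))-f(\cal{A}_{k,\mathrm{sub}}(\tau))\bigr)d\tau .
\end{align*}
The first term is $O(t^{-\frac n2(1-\frac1q)-\frac12})$ by Lemma \ref{lem:heat_asymp} (2). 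The second is handled by Lemma \ref{lem:heat_decay} from $L^1$: $\norm{f(u(\tau))}_{L^1}$ is bounded by Proposition \ref{pro:P_global}, and $f(\cal{A}_{1,\mathrm{sup}})$ is integrable on $(0,1)$ in $L^1$. This also gives order $t^{-\frac n2(1-\frac1q)-\frac12}$.

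\textbf{Main term.} For the third term I use the mean-value bound
\[
|f(u)-f(v)|\le C(|u|^{p-1}+|v|^{p-1})|u-v|.
\]
Combining the induction hypothesis with the $L^\infty$ decay of $u$ and of $\cal{A}_{k,\mathrm{sub}}$ yields
\[
\norm{f(u(\tau))-f(\cal{A}_{k,\mathrm{sub}}(\tau))}_{L^1}\le C\tau^{-\frac n2(p-1)-k\sigma}=C\tau^{-\frac12-(k+1)\sigma}
\]
for $\tau$ large. On $[1,2^{k-1}]$ everything is bounded, which contributes $O(t^{-\frac n2(1-\frac1q)-\frac12})$.

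I split $\int_1^t$ at $t/2$:
\begin{itemize}
	\item On $[t/2,t]$, take $L^q$ norms with $\nabla$ on the heat kernel, using the analogous $L^q$ bounds (difference in $L^q$, the others in $L^\infty$). This gives $\int_{t/2}^t(t-\tau)^{-1/2}\tau^{-\frac n2(1-\frac1q)-\frac12-(k+1)\sigma}d\tau\lesssim t^{-\frac n2(1-\frac1q)-(k+1)\sigma}$.
	\item On $[1,t/2]$, use the $L^1\to L^q$ gradient estimate. This gives
	\[
	t^{-\frac n2(1-\frac1q)-\frac12}\int_1^{t/2}\tau^{-\frac12-(k+1)\sigma}d\tau .
	\]
\end{itemize}
The last integral is $\sim t^{\frac12-(k+1)\sigma}$, $\log t$, or $O(1)$ according as $(k+1)\sigma<,=,>1/2$. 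These are exactly the three cases $p<,=,>1+\frac{k+2}{(k+1)n}$. In every case the bound is dominated by the claimed rate, since $t^{-\frac12}\le t^{-(k+1)\sigma}$ when $(k+1)\sigma\le\frac12$, and so on.

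\textbf{Main obstacle.} The delicate step is the uniform control of $\cal{A}_{k,\mathrm{sub}}$ in $L^\infty$ and $L^1$, including near $\tau=1$ where the $\int_0^1$ piece appears. It also has to be done without a separate inductive bookkeeping of its self-similar terms. I would handle it by proving, inside the same induction, that $\norm{\cal{A}_{k,\mathrm{sub}}(t)}_{L^q}\le Ct^{-\frac n2(1-\frac1q)}$ for $t\ge1$, using Lemma \ref{lem:heat_decay} on each integral.
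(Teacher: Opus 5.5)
The paper gives no proof of this lemma; it is quoted from \cite{Kusaba}, so there is nothing in the text to compare against line by line. Your proof is correct, and it is the natural argument. You induct on $k$ and split the Duhamel representation of $u(t)-\cal{A}_{k+1,\mathrm{sub}}(t)$ at $\tau=1$ and $\tau=t/2$. You then use $L^{1}\to L^{q}$ gradient bounds on $[1,t/2]$ and $L^{q}\to L^{q}$ bounds on $[t/2,t]$. This is the same mechanism the paper uses in Lemma \ref{lem:P_Duhamel_decay} and in the proof of Theorem \ref{thm:P_1st-asymp}. The identity $\frac{n}{2}(p-1)+k\sigma=\frac{1}{2}+(k+1)\sigma$ produces exactly the three cases. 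The threshold $t>2^{k}$ guarantees $t/2>2^{k-1}$, which is the range where the induction hypothesis is available.

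There are three small points to clean up.

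First, describing $\cal{A}_{k,\mathrm{sub}}(t)-\cal{A}_{1,\mathrm{sup}}(t)$ as a finite sum of terms of sizes $t^{-j\sigma}$ is only heuristic. With a non-polynomial $f$ and the cutoffs at $\tau=1$, the profile does not decompose that way, so drop it. The direct inductive bound you propose in your last paragraph does work. Using $\frac{n}{2}(p-1)=\frac{1}{2}+\sigma<1$, it in fact gives $\norm{\cal{A}_{k+1,\mathrm{sub}}(t)-\cal{A}_{1,\mathrm{sup}}(t)}_{L^{q}}\leq Ct^{-\frac{n}{2}(1-\frac{1}{q})-\sigma}$ for $t\geq 2$.

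Second, the ``main obstacle'' is lighter than you suggest. For $\tau>2^{k-1}$, the triangle inequality with the $q=\infty$ case of the induction hypothesis and \eqref{eq:P_decay_gene} gives $\norm{\cal{A}_{k,\mathrm{sub}}(\tau)}_{L^{\infty}}\leq C\tau^{-\frac{n}{2}}$ at once. The separate inductive bound is therefore needed only for well-definedness of the profile and for integrability on $[1,2^{k-1}]$. In that range, with $t$ close to $1$, bound the $\int_{0}^{1}$ piece using the $L^{q}\to L^{q}$ estimate for $\tau$ near $1$. The $L^{1}\to L^{q}$ kernel bound is not integrable at $\tau=t$.

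Third, your step works verbatim with $\cal{A}_{k,\mathrm{sub}}$ replaced by $\cal{A}_{1,\mathrm{sup}}$. So you can start the induction at $k=0$ from Proposition \ref{pro:P_1st-asymp_super} and recover Proposition \ref{pro:P_2nd-asymp_sub}, which comes from the same source, instead of assuming it.
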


Lemma \ref{lem:P_asymp_higher_sub} with $k=1$ coincides with Proposition \ref{pro:P_2nd-asymp_sub}.
We remark that $\left( k+1 \right) \sigma <1/2$ if and only if $p<1+ ( k+2 ) / ( \left( k+1 \right) n )$ and that $\cal{A}_{k+1, \mathrm{sub}} \left( t \right)$ is defined recursively through an iteration based on $\cal{A}_{1, \mathrm{sup}} \left( t \right)$.

\begin{lem}[{\cite{Kusaba}}] \label{lem:P_asymp_higher_cri}
	Let $k \in \Z_{>0}$ and let $p=1+ ( k+2 ) / ( \left( k+1 \right) n )$.
	Let $u_{0} \in ( L^{1}_{1} \cap L^{\infty} ) \left( \R^{n} \right)$ and let $u \in X$ be the global solution to \eqref{P} given in Proposition \ref{pro:P_global}.
	Then for any $q \in \left[ 1, \infty \right]$, there exists $C>0$ such that the estimate
	\begin{align*}
		\norm{u \left( t \right) - \cal{A}_{k+2, \mathrm{cri}} \left( t \right)}_{L^{q}} \leq Ct^{- \frac{n}{2} \left( 1- \frac{1}{q} \right) - \frac{1}{2}}
	\end{align*}
	holds for all $t>2^{k+1}$, where
	\begin{align*}
		\cal{A}_{k+2, \mathrm{cri}} \left( t \right)
		&\coloneqq \cal{A}_{k+1, \mathrm{sub}} \left( t \right)
		+ \begin{dcases}
			a \cdot \nabla G_{t} \int_{1}^{t} \cal{M}_{0} \left( \cal{A}_{2, \mathrm{sub}} \left( \tau \right) - \cal{A}_{1, \mathrm{sup}} \left( \tau \right) \right) d \tau, &\qquad k=1, \\
			a \cdot \nabla G_{t} \int_{1}^{t} \cal{M}_{0} \left( \cal{A}_{k+1, \mathrm{sub}} \left( \tau \right) - \cal{A}_{k, \mathrm{sub}} \left( \tau \right) \right) d \tau, &\qquad k \geq 2.
		\end{dcases}
	\end{align*}
\end{lem}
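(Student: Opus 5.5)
The plan is to take the log-rate estimate of Lemma \ref{lem:P_asymp_higher_sub} at index $k$ as the starting point, isolate the single term responsible for the logarithm, and show that it is exactly the correction added in $\cal{A}_{k+2, \mathrm{cri}}(t)$. Since $p=1+(k+2)/((k+1)n)<1+(k+1)/(kn)$, Lemma \ref{lem:P_asymp_higher_sub} applies with this $k$. By the remark after it, $(k+1)\sigma=1/2$, and it gives
\begin{align*}
\norm{u(t)-\cal{A}_{k+1,\mathrm{sub}}(t)}_{L^{q}}\le Ct^{-\frac n2(1-\frac1q)-\frac12}\log t .
\end{align*}
Writing $\cal{A}_{0,\mathrm{sub}}:=\cal{A}_{1,\mathrm{sup}}$ for uniform notation when $k=1$, Duhamel's formula \eqref{eq:P_integral} gives the decomposition
\begin{align*}
u(t)-\cal{A}_{k+1,\mathrm{sub}}(t)&=\bigl(e^{t\Delta}u_0-\cal{M}_0(u_0)G_t\bigr)+\int_0^1 a\cdot\nabla e^{(t-\tau)\Delta}\bigl(f(u(\tau))-f(\cal{A}_{1,\mathrm{sup}}(\tau))\bigr)d\tau\\
&\quad+\int_1^t a\cdot\nabla e^{(t-\tau)\Delta}\bigl(f(\cal{A}_{k+1,\mathrm{sub}}(\tau))-f(\cal{A}_{k,\mathrm{sub}}(\tau))\bigr)d\tau\\
&\quad+\int_1^t a\cdot\nabla e^{(t-\tau)\Delta}\bigl(f(u(\tau))-f(\cal{A}_{k+1,\mathrm{sub}}(\tau))\bigr)d\tau .
\end{align*}
The first term is $O(t^{-\frac n2(1-\frac1q)-\frac12})$ by Lemma \ref{lem:heat_asymp} (2). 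The second term has an integrand bounded in $L^1_1$ on $\tau\in(0,1)$, so Lemmas \ref{lem:heat_decay} and \ref{lem:heat_asymp} (2) give decay at least $t^{-\frac n2(1-\frac1q)-\frac12}$.

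For the fourth term, I use the pointwise bound $\abs{f(u)-f(\cal{A})}\le C(\abs{u}^{p-1}+\abs{\cal{A}}^{p-1})\abs{u-\cal{A}}$, Proposition \ref{pro:P_global}, and the log-rate estimate above. These show the integrand is $O(\tau^{-\frac n2(p-1)-\frac12}\log\tau)$ in $L^1$ and correspondingly in $L^\infty$. Here $\frac n2(p-1)=\frac{k+2}{2(k+1)}>\frac12$, so the $L^1$ exponent exceeds $1$. I then split at $\tau=t/2$. On $[1,t/2]$, I put $\nabla$ on the kernel and use the integrable $L^1$ decay, which gives $t^{-\frac n2(1-\frac1q)-\frac12}$. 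On $[t/2,t]$, I use $L^q$ bounds on the integrand with $(t-\tau)^{-1/2}$, which gives a strictly faster rate. So this term causes no log loss.

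The third term is the core of the argument. Set $h(\tau):=f(\cal{A}_{k+1,\mathrm{sub}}(\tau))-f(\cal{A}_{k,\mathrm{sub}}(\tau))$. By the self-similar structure, $\cal{A}_{k+1,\mathrm{sub}}-\cal{A}_{k,\mathrm{sub}}$ scales like $\tau^{-k\sigma}\delta_\tau(\cdot)$ up to faster-decaying parts coming from the $\int_0^1$ pieces. Hence $h(\tau)$ is, to leading order, $f'(\cal{A}_{1,\mathrm{sup}})(\cal{A}_{k+1,\mathrm{sub}}-\cal{A}_{k,\mathrm{sub}})$, which is self-similar with $\norm{h(\tau)}_{L^1}\sim\tau^{-(k+1)\sigma-\frac12}=\tau^{-1}$ and $\norm{\abs{x}h(\tau)}_{L^1}\lesssim\tau^{-1/2}$. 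The higher-order Taylor terms and the $\int_0^1$ pieces decay faster, and I treat them as in the fourth term. I then write
\begin{align*}
\int_1^t a\cdot\nabla e^{(t-\tau)\Delta}h(\tau)\,d\tau=a\cdot\nabla G_t\int_1^t\cal{M}_0(h(\tau))\,d\tau+R(t).
\end{align*}
The first piece is the correction in the definition of $\cal{A}_{k+2,\mathrm{cri}}$: its integrand is the mass of the difference of consecutive nonlinear terms, which I read as the intended meaning of the $\cal{M}_0$ expression there, since the mass of $\cal{A}_{k+1,\mathrm{sub}}-\cal{A}_{k,\mathrm{sub}}$ itself vanishes.

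The main obstacle is showing $R(t)=O(t^{-\frac n2(1-\frac1q)-\frac12})$ without any logarithm. For $\tau\in[1,t/2]$, I split
\begin{align*}
\nabla e^{(t-\tau)\Delta}h-\cal{M}_0(h)\nabla G_t=\nabla\bigl(e^{(t-\tau)\Delta}h-\cal{M}_0(h)G_{t-\tau}\bigr)+\cal{M}_0(h)\nabla(G_{t-\tau}-G_t).
\end{align*}
By Lemma \ref{lem:heat_asymp} (2), the first part is bounded by $(t-\tau)^{-\frac n2(1-\frac1q)-1}\norm{\abs{x}h}_{L^1}\lesssim t^{-\frac n2(1-\frac1q)-1}\tau^{-1/2}$, which integrates to $t^{-\frac n2(1-\frac1q)-\frac12}$. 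By the mean value theorem in time, the second part is bounded by $\tau\,t^{-\frac n2(1-\frac1q)-\frac32}\tau^{-1}$, which integrates to the same order. For $\tau\in[t/2,t]$, I bound $\norm{\nabla e^{(t-\tau)\Delta}h}_{L^q}$ by $(t-\tau)^{-1/2}\norm{h}_{L^q}$, and $\norm{\cal{M}_0(h)\nabla G_t}_{L^q}$ directly. These give $t^{-\frac n2(1-\frac1q)-\frac n2(p-1)-kσ}\cdot t^{1/2}=t^{-\frac n2(1-\frac1q)-\frac12}$. Combining the four terms yields the claimed estimate for $t>2^{k+1}$.
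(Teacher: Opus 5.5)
The paper gives no proof of this lemma. It is quoted from \cite{Kusaba} and applied only when $\cal{M}_{0}(u_{0})=0$, where every profile and the correction vanish identically, so the exact form of the correction plays no role there. Your argument is therefore a genuinely different route, self-contained modulo Lemma \ref{lem:P_asymp_higher_sub} at the same $k$ (which applies, since $1+\frac{k+2}{(k+1)n}<1+\frac{k+1}{kn}$). It is correct. The Duhamel decomposition of $u-\cal{A}_{k+1,\mathrm{sub}}$ is exact, and the only borderline piece is $\int_{1}^{t}a\cdot\nabla e^{(t-\tau)\Delta}h(\tau)\,d\tau$ with $\norm{h(\tau)}_{L^{1}}\lesssim\tau^{-1}$. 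Your splitting at $t/2$ shows that replacing this piece by $a\cdot\nabla G_{t}\int_{1}^{t}\cal{M}_{0}(h(\tau))\,d\tau$ costs only $O(t^{-\frac{n}{2}(1-\frac{1}{q})-\frac{1}{2}})$. What your route buys over the citation is that it isolates exactly where the logarithm of Lemma \ref{lem:P_asymp_higher_sub} comes from.

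Your reading of the correction term is not merely convenient but necessary. As printed, $\cal{M}_{0}(\cal{A}_{k+1,\mathrm{sub}}(\tau)-\cal{A}_{k,\mathrm{sub}}(\tau))\equiv0$, and for $k=2$ the literal statement fails in general. The leading part of $h$, namely $f'(\cal{A}_{1,\mathrm{sup}})$ times the leading part of $\cal{A}_{3,\mathrm{sub}}-\cal{A}_{2,\mathrm{sub}}$, is even in $x$ and has mass $c\tau^{-1}$. For instance, take $n=1$, $a=1$, $f(\xi)=\abs{\xi}^{4/3}\xi$ and $\cal{M}_{0}(u_{0})>0$. Then every contribution to $c$ is a positive multiple of some $\partial_{x}^{2}G_{\nu}(0)<0$, so $\int_{1}^{t}\cal{M}_{0}(h(\tau))\,d\tau= c\log t+O(1)$ is unbounded. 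For $k=1$ the leading part $f'(\cal{A}_{1,\mathrm{sup}})(\cal{A}_{2,\mathrm{sub}}-\cal{A}_{1,\mathrm{sup}})$ is odd, which is why Proposition \ref{pro:P_2nd-asymp_sub_optimal} (2) needs no correction.

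Some points to tighten.

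For $k=1$ the convention you need is $\cal{A}_{1,\mathrm{sub}}:=\cal{A}_{1,\mathrm{sup}}$, not $\cal{A}_{0,\mathrm{sub}}$.

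In the second term, $f(\cal{A}_{1,\mathrm{sup}}(\tau))$ is not bounded in $L^{1}_{1}$ as $\tau\to0$; its $L^{1}$ norm is of order $\tau^{-\frac{n}{2}(p-1)}$. What you actually use is that it is integrable on $(0,1)$, because $\frac{n}{2}(p-1)=\frac{k+2}{2(k+1)}<1$. Since $t-\tau\geq t/2$ there, Lemma \ref{lem:heat_decay} alone suffices.

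In the fourth term, Lemma \ref{lem:P_asymp_higher_sub} only covers $\tau>2^{k}$, so use crude bounds on $[1,2^{k}]$.

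Most importantly, the estimates $\norm{h(\tau)}_{L^{q}}\lesssim\tau^{-\frac{n}{2}(1-\frac{1}{q})-1}$ and $\norm{\abs{x}h(\tau)}_{L^{1}}\lesssim\tau^{-\frac{1}{2}}$ carry the whole argument. They should not rest on approximate self-similarity or on a Taylor expansion of $f$: the differences $\cal{A}_{j+1,\mathrm{sub}}-\cal{A}_{j,\mathrm{sub}}$ with $j\geq2$ are not self-similar, and a second-order expansion of $f$ is delicate for $p<2$. Only upper bounds are needed. They follow from $\abs{f(\xi)-f(\eta)}\leq C(\abs{\xi}^{p-1}+\abs{\eta}^{p-1})\abs{\xi-\eta}$ once you show, by induction on $1\leq j\leq k$, that
\begin{align*}
\norm{\cal{A}_{j+1,\mathrm{sub}}(\tau)-\cal{A}_{j,\mathrm{sub}}(\tau)}_{L^{q}}\lesssim\tau^{-\frac{n}{2}(1-\frac{1}{q})-j\sigma},
\qquad
\norm{\abs{x}\left(\cal{A}_{j+1,\mathrm{sub}}(\tau)-\cal{A}_{j,\mathrm{sub}}(\tau)\right)}_{L^{1}}\lesssim\tau^{\frac{1}{2}-j\sigma}
\end{align*}
for $\tau\geq1$. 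The base case is the exact self-similarity of $\cal{A}_{2,\mathrm{sub}}-\cal{A}_{1,\mathrm{sup}}$. The inductive step uses Lemma \ref{lem:heat_decay}, the bound $\norm{\abs{x}\nabla e^{s\Delta}g}_{L^{1}}\lesssim\norm{g}_{L^{1}}+s^{-\frac{1}{2}}\norm{\abs{x}g}_{L^{1}}$, and $j\sigma<\frac{1}{2}$. With these inserted, the proof is complete.
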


We are now in a position to prove Theorem \ref{thm:P_decay_above} (1).

\begin{proof}[Proof of Theorem \ref{thm:P_decay_above} (1)]
	Let $q \in \left[ 1, \infty \right]$.
	We first consider the case where $p \geq 1+2/n$.
	Since $\cal{M}_{0} \left( u_{0} \right) =0$ implies $\cal{A}_{1, \mathrm{sup}} \left( t \right) \equiv 0$ and hence $\cal{A}_{2, \mathrm{cri}} \left( t \right) \equiv 0$, it follows from Propositions \ref{pro:P_1st-asymp_super} and \ref{pro:P_2nd-asymp_cri} that
	\begin{align*}
		\norm{u \left( t \right)}_{L^{q}} \leq Ct^{- \frac{n}{2} \left( 1- \frac{1}{q} \right) - \frac{1}{2}} \leq C \left( 1+t \right)^{- \frac{n}{2} \left( 1- \frac{1}{q} \right) - \frac{1}{2}}
	\end{align*}
	for all $t>2$.
	In addition, by \eqref{eq:P_decay_gene}, we have
	\begin{align*}
		\norm{u \left( t \right)}_{L^{q}} \leq C \left( 1+t \right)^{- \frac{n}{2} \left( 1- \frac{1}{q} \right)} \leq C \left( 1+t \right)^{- \frac{n}{2} \left( 1- \frac{1}{q} \right) - \frac{1}{2}}
	\end{align*}
	for any $t \in \left( 0, 2 \right]$.

	We next consider the case where $1+1/n<p<1+2/n$.
	In this case, there exists $k \in \Z_{>0}$ such that either $1+ ( k+2 ) / ( \left( k+1 \right) n ) <p<1+ ( k+1 ) / ( kn )$ or $p=1+ ( k+2 ) / ( \left( k+1 \right) n )$ holds.
	We note that $\cal{A}_{k+1, \mathrm{sub}} \left( t \right) \equiv 0$ and $\cal{A}_{k+2, \mathrm{cri}} \left( t \right) \equiv 0$, due to the assumption $\cal{M}_{0} \left( u_{0} \right) =0$.
	Therefore, applying Lemmas \ref{lem:P_asymp_higher_sub} and \ref{lem:P_asymp_higher_cri} gives
	\begin{align*}
		\norm{u \left( t \right)}_{L^{q}} \leq Ct^{- \frac{n}{2} \left( 1- \frac{1}{q} \right) - \frac{1}{2}} \leq C \left( 1+t \right)^{- \frac{n}{2} \left( 1- \frac{1}{q} \right) - \frac{1}{2}}
	\end{align*}
	for all $t>2^{k+1}$.
	If $t \in \left( 0, 2^{k+1} \right]$, then a computation similar to that in the case of $p \geq 1+2/n$ yields the desired estimate.
\end{proof}

We next present the proof of Theorem \ref{thm:P_decay_above} (2).
Let $u \in X$ be the global solution to \eqref{P} given in Proposition \ref{pro:P_global}.
We define a function $Q \colon \left( 0, \infty \right) \to \R$ by
\begin{align*}
	Q \left( t \right) \coloneqq \sup_{0<s \leq t} \left( 1+s \right)^{\frac{1}{2}} \left( \norm{u \left( s \right)}_{L^{1}} + \left( 1+s \right)^{\frac{n}{2}} \norm{u \left( s \right)}_{L^{\infty}} \right), \qquad t>0.
\end{align*}
Then we see that $Q$ is continuous.

The following lemma provides an estimate of the Duhamel term.

\begin{lem} \label{lem:P_Duhamel_decay}
	Let $p>1+1/ ( n+1 )$ and let $u_{0} \in ( L^{1} \cap L^{\infty} ) \left( \R^{n} \right)$.
	Let $u \in X$ be the global solution to \eqref{P} given in Proposition \ref{pro:P_global}.
	Then for any $q \in \left[ 1, \infty \right]$, there exists $C>0$ such that the estimate
	\begin{align*}
		\norm{\int_{0}^{t} a \cdot \nabla e^{\left( t- \tau \right) \Delta} f \left( u \left( \tau \right) \right) d \tau}_{L^{q}} \leq C \left( 1+t \right)^{- \frac{n}{2} \left( 1- \frac{1}{q} \right) - \frac{1}{2}} Q \left( t \right)^{p}
	\end{align*}
	holds for all $t>0$.
\end{lem}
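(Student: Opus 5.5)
The estimate is a pointwise-in-time bound for the Duhamel term in terms of $Q(t)$. It rests on the bound
\begin{align*}
	\norm{f(u(\tau))}_{L^{r}} \leq \norm{u(\tau)}_{L^{\infty}}^{p-1} \norm{u(\tau)}_{L^{r}},
\end{align*}
combined with interpolation between $L^{1}$ and $L^{\infty}$. By the definition of $Q$, for $0<\tau\le t$ and any $r\in[1,\infty]$ we have
\begin{align*}
	\norm{u(\tau)}_{L^{r}} \leq Q(t) (1+\tau)^{-\frac{n}{2}(1-\frac{1}{r})-\frac12}.
\end{align*}
Hence
\begin{align*}
	\norm{f(u(\tau))}_{L^{r}} \leq Q(t)^{p} (1+\tau)^{-\frac{n}{2}(p-\frac1r)-\frac p2}.
\end{align*}
In particular, the $L^{1}$ norm of $f(u(\tau))$ decays like $(1+\tau)^{-\frac{n+1}{2}(p-1)-\frac12}$. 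The condition $p>1+1/(n+1)$ is exactly what makes the exponent $\frac{n+1}{2}(p-1)+\frac12$ exceed $1$, so $\norm{f(u(\tau))}_{L^1}$ is integrable in $\tau$. This integrability is the key point.

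I split the integral at $t/2$ and treat $t\ge1$; for $t\le 1$ a crude bound suffices. There, Lemma \ref{lem:heat_decay} with $r=q$ gives
\begin{align*}
	\norm{\nabla e^{(t-\tau)\Delta} g}_{L^{q}} \le C (t-\tau)^{-1/2}\norm{g}_{L^q},
\end{align*}
whose time singularity is integrable on $(0,t)$. This yields $CQ(t)^{p}$, which is comparable to $(1+t)^{-\cdots}Q(t)^p$ when $t\le1$.

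For $\tau\in(0,t/2)$, I use Lemma \ref{lem:heat_decay} from $L^{1}$ to $L^{q}$ with one derivative:
\begin{align*}
	\norm{\nabla e^{(t-\tau)\Delta} g}_{L^{q}} \le C (t-\tau)^{-\frac n2(1-\frac1q)-\frac12}\norm{g}_{L^1}.
\end{align*}
Since $t-\tau\ge t/2$, this factor is $\le C t^{-\frac n2(1-\frac1q)-\frac12}$. The remaining integral $\int_0^\infty (1+\tau)^{-\frac{n+1}{2}(p-1)-\frac12}\,d\tau$ is finite, so this piece is bounded by $C(1+t)^{-\frac n2(1-\frac1q)-\frac12}Q(t)^p$.

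For $\tau\in(t/2,t)$, I instead use $L^{q}\to L^{q}$ with one derivative:
\begin{align*}
	\norm{\nabla e^{(t-\tau)\Delta} g}_{L^{q}} \le C(t-\tau)^{-1/2}\norm{g}_{L^q},
\end{align*}
together with
\begin{align*}
	\norm{f(u(\tau))}_{L^q}\le Q(t)^p (1+t)^{-\frac n2(p-\frac1q)-\frac p2}.
\end{align*}
Integrating $(t-\tau)^{-1/2}$ over $(t/2,t)$ gives a factor $t^{1/2}$. The total decay exponent is then $\frac n2(p-\frac1q)+\frac p2-\frac12$, and I need it to be at least $\frac n2(1-\frac1q)+\frac12$. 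This reduces to $\frac{n+1}{2}(p-1)\ge 1-\ldots$, or more precisely $\frac{n+1}{2}(p-1)\ge \frac12$, i.e.\ $p\ge 1+1/(n+1)$, which holds.

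The main obstacle is therefore the $(0,t/2)$ piece, because that is where integrability of $\norm{f(u)}_{L^1}$ is genuinely needed. The second piece only needs an exponent comparison, which holds with room to spare. If the $q=\infty$ endpoint causes trouble in the second piece, I would instead use $L^{\infty}\to L^{\infty}$ directly, which gives the same computation.
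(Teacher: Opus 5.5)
Your proof is correct and follows the paper's argument essentially step for step. You use the same split into $0<t\le 1$ and $t>1$, and the same cut at $t/2$. On $(0,t/2)$ you apply the $L^{1}\to L^{q}$ gradient estimate, where $p>1+1/(n+1)$ makes $(1+\tau)^{-\frac{n}{2}(p-1)-\frac{p}{2}}$ integrable. On $(t/2,t)$ you apply the $L^{q}\to L^{q}$ estimate, followed by the same exponent comparison. The only cosmetic difference is that you bound $\norm{f(u(\tau))}_{L^{q}}$ by $\norm{u(\tau)}_{L^{\infty}}^{p-1}\norm{u(\tau)}_{L^{q}}$ rather than by $\norm{u(\tau)}_{L^{pq}}^{p}$ with interpolation, and this gives the identical decay exponent $\frac{n}{2}\left(p-\frac{1}{q}\right)+\frac{p}{2}$.
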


\begin{proof}
	Let $q \in \left[ 1, \infty \right]$.
	We first consider the case where $0<t \leq 1$.
	By Lemma \ref{lem:heat_decay}, we have
	\begin{align*}
		\norm{\int_{0}^{t} a \cdot \nabla e^{\left( t- \tau \right) \Delta} f \left( u \left( \tau \right) \right) d \tau}_{L^{q}}
		&\leq \int_{0}^{t} \norm{a \cdot \nabla e^{\left( t- \tau \right) \Delta} f \left( u \left( \tau \right) \right)}_{L^{q}} d \tau \\
		&\leq C \int_{0}^{t} \left( t- \tau \right)^{- \frac{1}{2}} \norm{u \left( \tau \right)}_{L^{pq}}^{p} d \tau \\
		&\leq CQ \left( t \right)^{p} \int_{0}^{t} \left( t- \tau \right)^{- \frac{1}{2}} \left( 1+ \tau \right)^{- \frac{n}{2} \left( 1- \frac{1}{q} \right) - \frac{n}{2} \left( p-1 \right) - \frac{p}{2}} d \tau \\
		&\leq CQ \left( t \right)^{p} \int_{0}^{t} \left( t- \tau \right)^{- \frac{1}{2}} d \tau \\
		&\leq CQ \left( t \right)^{p} t^{\frac{1}{2}} \\
		&\leq CQ \left( t \right)^{p}.
	\end{align*}
	We next consider the case where $t>1$.
	It follows from Lemma \ref{lem:heat_decay} that
	\begin{align*}
		\norm{\int_{0}^{t/2} a \cdot \nabla e^{\left( t- \tau \right) \Delta} f \left( u \left( \tau \right) \right) d \tau}_{L^{q}}
		&\leq \int_{0}^{t/2} \norm{a \cdot \nabla e^{\left( t- \tau \right) \Delta} f \left( u \left( \tau \right) \right)}_{L^{q}} d \tau \\
		&\leq C \int_{0}^{t/2} \left( t- \tau \right)^{- \frac{n}{2} \left( 1- \frac{1}{q} \right) - \frac{1}{2}} \norm{u \left( \tau \right)}_{L^{p}}^{p} d \tau \\
		&\leq Ct^{- \frac{n}{2} \left( 1- \frac{1}{q} \right) - \frac{1}{2}} Q \left( t \right)^{p} \int_{0}^{t/2} \left( 1+ \tau \right)^{- \frac{n}{2} \left( p-1 \right) - \frac{p}{2}} d \tau \\
		&\leq Ct^{- \frac{n}{2} \left( 1- \frac{1}{q} \right) - \frac{1}{2}} Q \left( t \right)^{p}, \\
		\norm{\int_{t/2}^{t} a \cdot \nabla e^{\left( t- \tau \right) \Delta} f \left( u \left( \tau \right) \right) d \tau}_{L^{q}}
		&\leq \int_{t/2}^{t} \norm{a \cdot \nabla e^{\left( t- \tau \right) \Delta} f \left( u \left( \tau \right) \right)}_{L^{q}} d \tau \\
		&\leq C \int_{t/2}^{t} \left( t- \tau \right)^{- \frac{1}{2}} \norm{u \left( \tau \right)}_{L^{pq}}^{p} d \tau \\
		&\leq C \int_{t/2}^{t} \left( t- \tau \right)^{- \frac{1}{2}} \left( 1+ \tau \right)^{- \frac{n}{2} \left( 1- \frac{1}{q} \right) - \frac{n}{2} \left( p-1 \right) - \frac{p}{2}} d \tau \\
		&\leq Ct^{- \frac{n}{2} \left( 1- \frac{1}{q} \right) - \frac{n}{2} \left( p-1 \right) - \frac{p}{2}} Q \left( t \right)^{p} \int_{t/2}^{t} \left( t- \tau \right)^{- \frac{1}{2}} d \tau \\
		&\leq Ct^{- \frac{n}{2} \left( 1- \frac{1}{q} \right) - \frac{n}{2} \left( p-1 \right) - \frac{p}{2} + \frac{1}{2}} Q \left( t \right)^{p} \\
		&\leq Ct^{- \frac{n}{2} \left( 1- \frac{1}{q} \right) - \frac{1}{2}} Q \left( t \right)^{p}.
	\end{align*}
	Here, we have used the fact that
	\begin{align*}
		- \frac{n}{2} \left( p-1 \right) - \frac{p}{2} <-1 \quad \Longleftrightarrow \quad p>1+ \frac{1}{n+1}.
	\end{align*}
	Combining these estimates yields the desired conclusion.
\end{proof}

We are ready to prove Theorem \ref{thm:P_decay_above} (2).

\begin{proof}[Proof of Theorem \ref{thm:P_decay_above} (2)]
	Let $t>0$.
	Since $\cal{M}_{0} \left( u_{0} \right) =0$, it follows from Lemmas \ref{lem:heat_decay} and \ref{lem:heat_asymp} that
	\begin{align*}
		\norm{e^{t \Delta} u_{0}}_{L^{q}}
		&\leq C \norm{u_{0}}_{L^{q}} \leq C \norm{u_{0}}_{L^{1} \cap L^{\infty}}, \\
		\norm{e^{t \Delta} u_{0}}_{L^{q}}
		&= \norm{e^{t \Delta} u_{0} - \cal{M}_{0} \left( u_{0} \right) G_{t}}_{L^{q}} \leq Ct^{- \frac{n}{2} \left( 1- \frac{1}{q} \right) - \frac{1}{2}} \norm{\abs{x} u_{0}}_{L^{1}}
	\end{align*}
	for any $q \in \left[ 1, \infty \right]$.
	Hence, there exists $C_{2} >0$, depending only on $n$, such that
	\begin{align*}
		\left( 1+t \right)^{\frac{1}{2}} \left( \norm{e^{t \Delta} u_{0}}_{L^{1}} + \left( 1+t \right)^{\frac{n}{2}} \norm{e^{t \Delta} u_{0}}_{L^{\infty}} \right) \leq C_{2} \norm{u_{0}}_{L^{1}_{1} \cap L^{\infty}}.
	\end{align*}
	Applying the above estimate and Lemma \ref{lem:P_Duhamel_decay} to \eqref{eq:P_integral}, we obtain
	\begin{align*}
		Q \left( t \right) \leq C_{2} \norm{u_{0}}_{L^{1}_{1} \cap L^{\infty}} +C_{3} Q \left( t \right)^{p}
	\end{align*}
	for all $t>0$, where $C_{3} >0$ is a constant depending only on $n$, $p$, and $a$.
	In addition, we see from the proof of Lemma \ref{lem:P_Duhamel_decay} that
	\begin{align*}
		\lim_{t \searrow 0} \norm{\int_{0}^{t} a \cdot \nabla e^{\left( t- \tau \right) \Delta} f \left( u \left( \tau \right) \right) d \tau}_{L^{1} \cap L^{\infty}} =0,
	\end{align*}
	which implies
	\begin{align*}
		\limsup_{t \searrow 0} Q \left( t \right) \leq C_{2} \norm{u_{0}}_{L^{1}_{1} \cap L^{\infty}}.
	\end{align*}
	Therefore, if
	\begin{align}
		\label{eq:initial_small}
		C_{2} \norm{u_{0}}_{L^{1}_{1} \cap L^{\infty}} +C_{3} \left( 2C_{2} \norm{u_{0}}_{L^{1}_{1} \cap L^{\infty}} \right)^{p} <2C_{2} \norm{u_{0}}_{L^{1}_{1} \cap L^{\infty}},
	\end{align}
	then we have
	\begin{align*}
		Q \left( t \right) \leq 2C_{2} \norm{u_{0}}_{L^{1}_{1} \cap L^{\infty}}
	\end{align*}
	for all $t>0$.
	Combining this estimate with the H\"{o}lder inequality, we arrive at \eqref{eq:P_decay_above_2}.
	The condition \eqref{eq:initial_small} can be written as
	\begin{align*}
		\norm{u_{0}}_{L^{1}_{1} \cap L^{\infty}} < \frac{1}{2C_{2}} \left( 2C_{3} \right)^{- \frac{1}{p-1}} \eqqcolon \varepsilon_{0}.
	\end{align*}
	This completes the proof of Theorem \ref{thm:P_decay_above}.
\end{proof}

\begin{rem}
	In the proofs of Lemma \ref{lem:P_Duhamel_decay} and Theorem \ref{thm:P_decay_above} (2), the condition $p \leq 1+1/n$ is not used, and therefore the argument remains valid even in the case where $p>1+1/n$.
\end{rem}

\section{Proofs of Theorems \ref{thm:P_1st-asymp} and \ref{thm:P_decay_below}} \label{sec:thm_2-3}

We first give the proof of Theorem \ref{thm:P_1st-asymp}.

\begin{proof}[Proof of Theorem \ref{thm:P_1st-asymp}]
	Since $p>1+1/ ( n+1 )$, it follows from \eqref{eq:P_decay_assump} that
	\begin{align*}
		\norm{\psi_{0}}_{L^{1}}
		\leq \int_{0}^{\infty} \norm{u \left( \tau \right)}_{L^{p}}^{p} d \tau
		\leq C \int_{0}^{\infty} \left( 1+ \tau \right)^{- \frac{n}{2} \left( p-1 \right) - \frac{p}{2}} d \tau
		\leq C,
	\end{align*}
	which implies $\psi_{0} \in L^{1} \left( \R^{n} \right)$.

	We show that
	\begin{align}
		\label{eq:P_Duhamel_asymp}
		\lim_{t \to \infty} t^{\frac{n}{2} \left( 1- \frac{1}{q} \right) + \frac{1}{2}} \norm{\int_{0}^{t} a \cdot \nabla e^{\left( t- \tau \right) \Delta} f \left( u \left( \tau \right) \right) d \tau -a \cdot \nabla e^{t \Delta} \psi_{0}}_{L^{q}} =0
	\end{align}
	holds for any $q \in \left[ 1, \infty \right]$.
	Let $t>1$ and let $q \in \left[ 1, \infty \right]$.
	We decompose the remainder as follows:
	\begin{align*}
		\int_{0}^{t} a \cdot \nabla e^{\left( t- \tau \right) \Delta} f \left( u \left( \tau \right) \right) d \tau -a \cdot \nabla e^{t \Delta} \psi_{0}
		&= \int_{0}^{t/2} \int_{0}^{1} a \cdot \nabla \left( - \tau \Delta \right) e^{\left( t- \tau \theta \right) \Delta} f \left( u \left( \tau \right) \right) d \theta d \tau \\
		&\hspace{1cm} + \int_{t/2}^{t} a \cdot \nabla e^{\left( t- \tau \right) \Delta} f \left( u \left( \tau \right) \right) d \tau \\
		&\hspace{1cm} -a \cdot \nabla e^{t \Delta} \int_{t/2}^{\infty} f \left( u \left( \tau \right) \right) d \tau \\
		&\eqqcolon \sum_{\ell =1}^{3} J_{\ell} \left( t \right).
	\end{align*}
	By virtue of \eqref{eq:P_decay_assump} and Lemma \ref{lem:heat_decay}, each term is estimated as
	\begin{align*}
		\norm{J_{1} \left( t \right)}_{L^{q}}
		&\leq \int_{0}^{t/2} \int_{0}^{1} \norm{a \cdot \nabla \left( - \tau \Delta \right) e^{\left( t- \tau \theta \right) \Delta} f \left( u \left( \tau \right) \right)}_{L^{q}} d \theta d \tau \\
		&\leq C \int_{0}^{t/2} \int_{0}^{1} \tau \left( t- \tau \theta \right)^{- \frac{n}{2} \left( 1- \frac{1}{q} \right) - \frac{3}{2}} \norm{u \left( \tau \right)}_{L^{p}}^{p} d \theta d \tau \\
		&\leq Ct^{- \frac{n}{2} \left( 1- \frac{1}{q} \right) - \frac{3}{2}} \int_{0}^{t/2} \left( 1+ \tau \right)^{1- \frac{n}{2} \left( p-1 \right) - \frac{p}{2}} d \tau \\
		&\leq Ct^{- \frac{n}{2} \left( 1- \frac{1}{q} \right) - \frac{3}{2}} \times \begin{dcases}
			\left( 1+t \right)^{- \frac{n+1}{2} \left( p-1 \right) + \frac{3}{2}} , &\qquad p<1+ \frac{3}{n+1}, \\
			\log \left( 2+t \right), &\qquad p=1+ \frac{3}{n+1}, \\
			1, &\qquad p>1+ \frac{3}{n+1},
		\end{dcases} \\
		&\leq Ct^{- \frac{n}{2} \left( 1- \frac{1}{q} \right) - \frac{1}{2}} \times \begin{dcases}
			t^{- \frac{n+1}{2} \left( p-1 \right) + \frac{1}{2}} , &\qquad p<1+ \frac{3}{n+1}, \\
			t^{-1} \log \left( 2+t \right), &\qquad p=1+ \frac{3}{n+1}, \\
			t^{-1}, &\qquad p>1+ \frac{3}{n+1},
		\end{dcases} \\
		\norm{J_{2} \left( t \right)}_{L^{q}}
		&\leq \int_{t/2}^{t} \norm{a \cdot \nabla e^{\left( t- \tau \right) \Delta} f \left( u \left( \tau \right) \right)}_{L^{q}} d \tau \\
		&\leq C \int_{t/2}^{t} \left( t- \tau \right)^{- \frac{1}{2}} \norm{u \left( \tau \right)}_{L^{pq}}^{p} d \tau \\
		&\leq C \int_{t/2}^{t} \left( t- \tau \right)^{- \frac{1}{2}} \left( 1+ \tau \right)^{- \frac{n}{2} \left( 1- \frac{1}{q} \right) - \frac{n}{2} \left( p-1 \right) - \frac{p}{2}} d \tau \\
		&\leq Ct^{- \frac{n}{2} \left( 1- \frac{1}{q} \right) - \frac{n}{2} \left( p-1 \right) - \frac{p}{2}} \int_{t/2}^{t} \left( t- \tau \right)^{- \frac{1}{2}} d \tau \\
		&\leq Ct^{- \frac{n}{2} \left( 1- \frac{1}{q} \right) - \frac{n+1}{2} \left( p-1 \right)}, \\
		\norm{J_{3} \left( t \right)}_{L^{q}}
		&= \norm{a \cdot \nabla e^{t \Delta} \int_{t/2}^{\infty} f \left( u \left( \tau \right) \right) d \tau}_{L^{q}} \\
		&\leq Ct^{- \frac{n}{2} \left( 1- \frac{1}{q} \right) - \frac{1}{2}} \int_{t/2}^{\infty} \norm{u \left( \tau \right)}_{L^{p}}^{p} d \tau \\
		&\leq Ct^{- \frac{n}{2} \left( 1- \frac{1}{q} \right) - \frac{1}{2}} \int_{t/2}^{\infty} \left( 1+ \tau \right)^{- \frac{n}{2} \left( p-1 \right) - \frac{p}{2}} d \tau \\
		&\leq Ct^{- \frac{n}{2} \left( 1- \frac{1}{q} \right) - \frac{n+1}{2} \left( p-1 \right)}.
	\end{align*}
	Taking into account the fact that
	\begin{align*}
		- \frac{n+1}{2} \left( p-1 \right) <- \frac{1}{2} \quad \Longleftrightarrow \quad p>1+ \frac{1}{n+1},
	\end{align*}
	we obtain \eqref{eq:P_Duhamel_asymp}.

	Since $\cal{M}_{0} \left( u_{0} \right) =0$, by Lemma \ref{lem:heat_asymp} and \eqref{eq:P_Duhamel_asymp}, we have
	\begin{align*}
		&\limsup_{t \to \infty} t^{\frac{n}{2} \left( 1- \frac{1}{q} \right) + \frac{1}{2}} \norm{u \left( t \right) - \scr{A} \left( t \right)}_{L^{q}} \\
		&\hspace{1cm} \leq \limsup_{t \to \infty} t^{\frac{n}{2} \left( 1- \frac{1}{q} \right) + \frac{1}{2}} \norm{e^{t \Delta} u_{0} + \cal{M}_{1} \left( u_{0} \right) \cdot \nabla G_{t}}_{L^{q}} \\
		&\hspace{2cm} + \limsup_{t \to \infty} t^{\frac{n}{2} \left( 1- \frac{1}{q} \right) + \frac{1}{2}} \norm{\int_{0}^{t} a \cdot \nabla e^{\left( t- \tau \right) \Delta} f \left( u \left( \tau \right) \right) d \tau -a \cdot \nabla e^{t \Delta} \psi_{0}}_{L^{q}} \\
		&\hspace{2cm} + \limsup_{t \to \infty} t^{\frac{n}{2} \left( 1- \frac{1}{q} \right) + \frac{1}{2}} \norm{a \cdot \nabla \left( e^{t \Delta} \psi_{0} - \cal{M}_{0} \left( \psi_{0} \right) G_{t} \right)}_{L^{q}} \\
		&\hspace{1cm} =0,
	\end{align*}
	which yields \eqref{eq:P_1st-asymp}.
\end{proof}

Theorem \ref{thm:P_decay_below} follows as a corollary of Theorem \ref{thm:P_1st-asymp}.

\begin{proof}[Proof of Theorem \ref{thm:P_decay_below}]
	Let $q \in \left[ 1, \infty \right]$.
	We start with the fact that
	\begin{align*}
		\norm{\scr{A} \left( t \right)}_{L^{q}} =t^{- \frac{n}{2} \left( 1- \frac{1}{q} \right) - \frac{1}{2}} \norm{\scr{A} \left( 1 \right)}_{L^{q}}
	\end{align*}
	holds for all $t>0$.
	By Theorem \ref{thm:P_1st-asymp}, we have
	\begin{align*}
		\limsup_{t \to \infty} t^{\frac{n}{2} \left( 1- \frac{1}{q} \right) + \frac{1}{2}} \norm{u \left( t \right)}_{L^{q}}
		&\leq \limsup_{t \to \infty} t^{\frac{n}{2} \left( 1- \frac{1}{q} \right) + \frac{1}{2}} \norm{\scr{A} \left( t \right)}_{L^{q}} \\
		&\hspace{1cm} + \limsup_{t \to \infty} t^{\frac{n}{2} \left( 1- \frac{1}{q} \right) + \frac{1}{2}} \norm{u \left( t \right) - \scr{A} \left( t \right)}_{L^{q}} \\
		&= \norm{\scr{A} \left( 1 \right)}_{L^{q}}, \\
		\liminf_{t \to \infty} t^{\frac{n}{2} \left( 1- \frac{1}{q} \right) + \frac{1}{2}} \norm{u \left( t \right)}_{L^{q}}
		&\geq \liminf_{t \to \infty} t^{\frac{n}{2} \left( 1- \frac{1}{q} \right) + \frac{1}{2}} \norm{\scr{A} \left( t \right)}_{L^{q}} \\
		&\hspace{1cm} - \limsup_{t \to \infty} t^{\frac{n}{2} \left( 1- \frac{1}{q} \right) + \frac{1}{2}} \norm{u \left( t \right) - \scr{A} \left( t \right)}_{L^{q}} \\
		&= \norm{\scr{A} \left( 1 \right)}_{L^{q}},
	\end{align*}
	whence follows \eqref{eq:P_decay_below}.
	We note that $\scr{A} \left( 1 \right)$ is represented as
	\begin{align*}
		\scr{A} \left( 1 \right) = \frac{1}{2} \sum_{j=1}^{n} \left( \cal{M}_{e_{j}} \left( u_{0} \right) -a_{j} \cal{M}_{0} \left( \psi_{0} \right) \right) x_{j} G_{1}.
	\end{align*}
	Since
	\begin{align*}
		\frac{1}{2} \int_{\R^{n}} y_{j} y_{k} G_{1} \left( y \right) dy= \begin{cases}
			1, &\qquad j=k, \\
			0, &\qquad j \neq k,
		\end{cases}
	\end{align*}
	we conclude that $\norm{\scr{A} \left( 1 \right)}_{L^{q}} >0$ if and only if \eqref{eq:P_decay_moment} holds for some $j \in \left\{ 1, \ldots, n \right\}$.
\end{proof}

\appendix
\section{A sufficient condition for a moment condition} \label{sec:app_moment}

In this section, we provide a sufficient condition under which \eqref{eq:P_decay_moment} holds.
We recall that as mentioned in Remark \ref{rem:smallness}, the assertion (2) in Theorem \ref{thm:P_decay_above} is true even for the case where $p>1+1/n$.

\begin{prop} \label{pro:appendix}
	Let $p>1+1/ ( n+1 )$ and let $\varphi \in ( L^{1}_{1} \cap L^{\infty} ) \left( \R^{n} \right) \setminus \left\{ 0 \right\}$ satisfy $\cal{M}_{0} \left( \varphi \right) =0$ and $\cal{M}_{e_{j}} \left( \varphi \right) \neq 0$ for some $j \in \left\{ 1, \ldots, n \right\}$.
	Let $u_{0} \coloneqq \varepsilon \varphi$ with $\varepsilon >0$ and let $u \in X$ be the global solution to \eqref{P} given in Proposition \ref{pro:P_global}.
	Then there exists $\varepsilon_{1} >0$, independent of $\varepsilon$, such that \eqref{eq:P_decay_moment} holds, provided that $\varepsilon < \varepsilon_{1}$.
\end{prop}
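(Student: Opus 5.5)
The plan is a scaling comparison. The moment $\cal{M}_{e_{j}}(u_{0}) = \varepsilon \cal{M}_{e_{j}}(\varphi)$ is linear in $\varepsilon$. The correction $a_{j}\cal{M}_{0}(\psi_{0})$ should be $O(\varepsilon^{p})$ with $p>1$. Hence the linear term dominates for small $\varepsilon$.

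First, I would apply Theorem \ref{thm:P_decay_above} (2), which by Remark \ref{rem:smallness} holds for every $p>1+1/(n+1)$. The constants $\varepsilon_{0}$ and $C_{1}$ there do not depend on the datum. So if
\begin{align*}
	\varepsilon \norm{\varphi}_{L^{1}_{1} \cap L^{\infty}} < \varepsilon_{0},
\end{align*}
then for all $t>0$ and $q \in [1,\infty]$,
\begin{align*}
	\norm{u(t)}_{L^{q}} \leq C_{1} \varepsilon \norm{\varphi}_{L^{1}_{1} \cap L^{\infty}} (1+t)^{-\frac{n}{2}(1-\frac{1}{q})-\frac{1}{2}}.
\end{align*}
In particular $u$ satisfies \eqref{eq:P_decay_assump}, so $\psi_{0}$ is well defined as in Theorem \ref{thm:P_1st-asymp}.

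Next, I would estimate the mass of $\psi_{0}$ exactly as at the start of the proof of Theorem \ref{thm:P_1st-asymp}, but keeping track of $\varepsilon$. Using $\abs{f(\xi)} = \abs{\xi}^{p}$, we get
\begin{align*}
	\abs{\cal{M}_{0}(\psi_{0})} \leq \int_{0}^{\infty} \norm{u(\tau)}_{L^{p}}^{p} d\tau \leq \left( C_{1}\varepsilon \norm{\varphi}_{L^{1}_{1}\cap L^{\infty}} \right)^{p} \int_{0}^{\infty} (1+\tau)^{-\frac{n}{2}(p-1)-\frac{p}{2}} d\tau .
\end{align*}
The last integral is finite because $p>1+1/(n+1)$. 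This gives $\abs{\cal{M}_{0}(\psi_{0})} \leq C_{4}\varepsilon^{p}$, where $C_{4}$ depends only on $n$, $p$ and $\varphi$.

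Finally, I would fix the index $j$ with $\cal{M}_{e_{j}}(\varphi)\neq 0$. Then
\begin{align*}
	\abs{\cal{M}_{e_{j}}(u_{0}) - a_{j}\cal{M}_{0}(\psi_{0})} \geq \varepsilon \abs{\cal{M}_{e_{j}}(\varphi)} - \abs{a_{j}} C_{4}\varepsilon^{p},
\end{align*}
which is positive whenever $\varepsilon^{p-1} < \abs{\cal{M}_{e_{j}}(\varphi)}/(\abs{a_{j}}C_{4} + 1)$. I would take $\varepsilon_{1}$ to be the minimum of $\varepsilon_{0}/\norm{\varphi}_{L^{1}_{1}\cap L^{\infty}}$ and this threshold. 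Both quantities are independent of $\varepsilon$.

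The only delicate point is uniformity: the constant in the smallness estimate must not depend on $\varepsilon$. This is exactly what the datum-independence of $\varepsilon_{0}$ and $C_{1}$ in Theorem \ref{thm:P_decay_above} (2) provides, so I expect no real obstacle.
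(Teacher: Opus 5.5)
Your proposal is correct and follows the paper's proof essentially step for step: you obtain smallness from Theorem \ref{thm:P_decay_above} (2), extended to all $p>1+1/(n+1)$ by Remark \ref{rem:smallness}; you bound $\abs{\cal{M}_{0}(\psi_{0})} \leq C\varepsilon^{p}$ using the integrability of $(1+\tau)^{-\frac{n}{2}(p-1)-\frac{p}{2}}$; and you conclude with the reverse triangle inequality. Two small remarks: the bound on $\varepsilon$ that enters $\varepsilon_{1}$ is the $(p-1)$-th root of $\abs{\cal{M}_{e_{j}}(\varphi)}/(\abs{a_{j}}C_{4}+1)$, and the $+1$ in your denominator also covers the case $a_{j}=0$, which the paper handles only implicitly.
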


\begin{proof}
	Assume $\varepsilon < \varepsilon_{0} / \norm{\varphi}_{L^{1}_{1} \cap L^{\infty}}$.
	Then it follows from Theorem \ref{thm:P_decay_above} (see also Remark \ref{rem:smallness}) and the fact that $p>1+1/ ( n+1 )$ that
	\begin{align*}
		\abs{\cal{M}_{0} \left( \psi_{0} \right)}
		&\leq \int_{0}^{\infty} \norm{u \left( \tau \right)}_{L^{p}}^{p} d \tau \\
		&\leq C_{1}^{p} \norm{u_{0}}_{L^{1}_{1} \cap L^{\infty}}^{p} \int_{0}^{\infty} \left( 1+ \tau \right)^{- \frac{n}{2} \left( p-1 \right) - \frac{p}{2}} d \tau \\
		&\leq \frac{2C_{1}^{p} \norm{\varphi}_{L^{1}_{1} \cap L^{\infty}}^{p}}{\left( n+1 \right) \left( p-1 \right) -1} \varepsilon^{p}.
	\end{align*}
	By using this estimate, we have
	\begin{align*}
		\abs{\cal{M}_{e_{j}} \left( u_{0} \right) -a_{j} \cal{M}_{0} \left( \psi_{0} \right)}
		&\geq \abs{\cal{M}_{e_{j}} \left( u_{0} \right)} - \abs{a_{j}} \abs{\cal{M}_{0} \left( \psi_{0} \right)} \\
		&\geq \varepsilon \abs{\cal{M}_{e_{j}} \left( \varphi \right)} - \frac{2 \abs{a_{j}} C_{1}^{p} \norm{\varphi}_{L^{1}_{1} \cap L^{\infty}}^{p}}{\left( n+1 \right) \left( p-1 \right) -1} \varepsilon^{p} \\
		&= \varepsilon \left( \abs{\cal{M}_{e_{j}} \left( \varphi \right)} - \frac{2 \abs{a_{j}} C_{1}^{p} \norm{\varphi}_{L^{1}_{1} \cap L^{\infty}}^{p}}{\left( n+1 \right) \left( p-1 \right) -1} \varepsilon^{p-1} \right).
	\end{align*}
	Since $\cal{M}_{e_{j}} \left( u_{0} \right) \neq 0$, if
	\begin{align*}
		\frac{2 \abs{a_{j}} C_{1}^{p} \norm{\varphi}_{L^{1}_{1} \cap L^{\infty}}^{p}}{\left( n+1 \right) \left( p-1 \right) -1} \varepsilon^{p-1} < \frac{1}{2} \abs{\cal{M}_{e_{j}} \left( \varphi \right)},
	\end{align*}
	then
	\begin{align*}
		\abs{\cal{M}_{e_{j}} \left( u_{0} \right) -a_{j} \cal{M}_{0} \left( \psi_{0} \right)} > \frac{1}{2} \abs{\cal{M}_{e_{j}} \left( \varphi \right)} >0.
	\end{align*}
	In the case where $\abs{a_{j}} \neq 0$, we can choose $\varepsilon_{1} >0$ as
	\begin{align*}
		\varepsilon_{1} = \min \left\{ \frac{\varepsilon_{0}}{\norm{\varphi}_{L^{1}_{1} \cap L^{\infty}}}, {\,} \left( \frac{\left( n+1 \right) \left( p-1 \right) -1}{4 \abs{a_{j}} C_{1}^{p} \norm{\varphi}_{L^{1}_{1} \cap L^{\infty}}^{p}} \abs{\cal{M}_{e_{j}} \left( \varphi \right)} \right)^{\frac{1}{p-1}} \right\}.
	\end{align*}
\end{proof}

\section*{Acknowledgments}

The second author was supported by JSPS Grant-in-Aid for JSPS Fellows Grant Number JP26KJ0016.



\bigskip
\noindent
Yi C. Huang \\
School of Mathematical Sciences, \\
Nanjing Normal University, Nanjing 210023, China \\
E-mail: \texttt{yi.huang.analysis@gmail.com}

\bigskip
\noindent
Ryunosuke Kusaba \\
Graduate School of Science, \\
Tohoku University, 6-3 Aoba, Aramaki, Aoba-ku, Sendai 980-8578, Japan \\
E-mail: \texttt{ryunosuke.kusaba.a1@tohoku.ac.jp}

\end{document}